\RequirePackage{fix-cm}
\documentclass[smallextended,envcountsect]{svjour3}  
\smartqed  
\usepackage{graphicx}
\usepackage{amsmath}
\usepackage{amssymb}
\usepackage{multirow}
\usepackage{booktabs}
\usepackage{longtable}
\usepackage{subfigure}
\usepackage{algorithm}
\usepackage{algorithmicx}
\usepackage{algpseudocode}
\journalname{}
\begin{document}

\title{A Second-Order Descent Algorithm for Nonlinear Equality Constrained Optimization}

\titlerunning{A Second-Order Descent Algorithm for Equality Constrained Optimization}

\author{Yonggang Pei \and
        Jingyi Wang \and
        Shuowen Fu \and
        Zhuohan Li}

\authorrunning{Y. Pei et al.}

\institute{Yonggang Pei, Jingyi Wang, Shuowen Fu and Zhuohan Li \at
           School of Mathematics and Statistics, Henan Normal University,
           Xinxiang 453007, China \\
           \email{peiyg@htu.edu.cn} \ (corresponding author); \
           \email{15993075691@163.com}; \
           \email{fsw@stu.htu.edu.cn}; \
           \email{lizhuohan2025@stu.htu.edu.cn}
           }

\date{Received: date / Accepted: date}

\maketitle

\begin{abstract}
In this paper, an adaptive second-order descent algorithm (ASODA) is proposed for solving nonlinear equality constrained optimization problems. By integrating a sequential quadratic programming approach with the homogeneous second-order descent framework (HSODF: He et al., Math. Program. 215, 575--636, 2026), each iteration of ASODA decomposes the trial step into a normal step and a tangential step. The latter is computed by converting the linearly constrained subproblem into an unconstrained one via the reduced-Hessian technique and then solving it using a generalized homogeneous model. The algorithm employs a merit function to evaluate step acceptability. Under standard smoothness assumptions, we establish a global iteration complexity analysis for ASODA toward an $\epsilon$-approximate second-order stationary point; the iteration count scales as $\mathcal{O}(\epsilon^{-3/2})$ up to constants whose dependence on $\epsilon$ is made explicit in Section \ref{sec3}; the bound is stated there under an explicit structural hypothesis on the regularization parameter ($\kappa_{\rm h}=O(1)$), which we keep visible rather than treating any constant as silently $\epsilon$-independent. This matches the optimal complexity bound achieved by HSODF for unconstrained problems. Numerical experiments on CUTEst problems demonstrate that ASODA often requires significantly fewer function and gradient evaluations than existing second-order methods, confirming its effectiveness and competitiveness.
\keywords{nonlinear equality constrained optimization \and second-order method \and merit function \and global iteration complexity}
\subclass{49M15 \and 65K05 \and 90C26 \and 90C30}
\end{abstract}

\section{Introduction}\label{sec1}
Nonlinear optimization plays a crucial role in science, engineering, economics, and industry, so research on algorithms with lower iteration complexity and higher computational efficiency is of great theoretical significance and practical value \cite{Nocedal,sunwenyu}.
When the optimization problems are nonconvex, first-order algorithms, such as the gradient descent method, are classical but may suffer from slow convergence and a propensity to stagnate at saddle points \cite{YuriiNesterov2,YCarmon,Reddi2018generic}. 
This has motivated extensive research into second-order methods, which leverage curvature information to achieve superior convergence rates and more robust escape from saddle points (see, e.g., \cite{Griewank,NesterovY1,Cartis1,Cartis2,Jiang,XihuaZhu1,Cartis7,Cartis6,NesterovY3}). 
 
A central challenge in second-order methods is balancing the computational cost of Hessian information with the benefits of curvature-aware steps. This has motivated a rich line of research: randomized subspace cubic Newton methods \cite{CubicSub2024} reduce per-iteration cost while preserving $\mathcal{O}(\epsilon^{-3/2})$ complexity; inexact Newton-CG frameworks \cite{Gratton2025fast} achieve the same rate using inexact subproblem solutions; and augmented Lagrangian methods with Newton-CG subproblems \cite{HeLuPong2023} find second-order stationary points of equality-constrained problems with high probability. The use of cubic regularization to escape saddle points has also been extended to primal-dual methods for constrained optimization \cite{Wang2025complexity}.

Recently, Zhang et al.~\cite{zhang2025HSODM} introduced the homogenization technique into unconstrained optimization, proposing the Homogeneous Second-Order Descent Method (HSODM). HSODM achieves an optimal global complexity of $\mathcal{O}(\epsilon^{-3/2})$ by solving a simple eigenvalue-like subproblem without explicit cubic regularization. He et al.~\cite{he2025HSODF} later generalized this into the Homogeneous Second-Order Descent Framework (HSODF), providing a unified, parameter-free scheme with a solid theoretical foundation. Given the efficiency and robustness of HSODF, a natural question is whether this framework can be extended to constrained optimization while preserving its optimal complexity. Sequential quadratic programming (SQP) methods have proven highly effective for smooth constrained problems \cite{boggs1995sequential,gill2005snopt}. 

In this paper, we propose an Adaptive Second-Order Descent Algorithm (ASODA) that hybridizes the HSODF philosophy with the well-established composite-step SQP framework. At each iteration, we formulate a linearly constrained subproblem, decompose the step into normal and tangential components, and solve the tangential subproblem using the generalized homogeneous model. Under mild Lipschitz-continuity assumptions, we analyze the global iteration complexity of ASODA toward an $\epsilon$-approximate second-order stationary point. The iteration bound is $\mathcal{O}(\epsilon^{-3/2})$ up to constants whose dependence on $\epsilon$ is made explicit in Section \ref{sec3}; the bound is stated there under an explicit structural hypothesis on the regularization parameter ($\kappa_{\rm h}=O(1)$), and we keep this dependence visible rather than treating any constant as silently $\epsilon$-independent. This matches the best known complexity for the unconstrained case and represents, to our knowledge, the first complexity result of this kind for an HSODF-type method applied to constrained optimization.
For positioning, a related but methodologically distinct line of work \cite{HeLuPong2023} addresses the same problem class by an augmented Lagrangian method with Newton-CG subproblem solvers, attaining a total inner-iteration complexity of $\widetilde{\mathcal{O}}(\epsilon^{-7/2})$ and an operation complexity measured in Hessian--vector products under a generalized LICQ, with the second-order guarantee holding with high probability. Our $\mathcal{O}(\epsilon^{-3/2})$ bound is of a different nature: it counts \emph{deterministic} outer iterations, each involving one generalized-homogeneous-model eigensolve and one linear-system solve for the normal step. Since the two complexity measures rest on different cost units (matrix--vector products versus factorization-based iterations), they are not directly comparable; we view the two approaches as complementary representatives of the factorization-free and factorization-based paradigms, respectively.

The remainder of this paper is organized as follows. In Section \ref{sec2}, we describe the construction of the algorithm and present the detailed framework of ASODA. In Section \ref{sec3}, we prove the global iteration complexity of the algorithm to the second-order approximate stationary point. Preliminary numerical results and comparison results are presented in Section \ref{sec4} and the conclusion is offered in Section \ref{sec:conclusion}.

\subsection*{Notation}

In this paper, $\|\cdot\|$ denotes the Euclidean norm. For a matrix \(A \in \mathbb{R}^{n\times n}\), $\lambda_1(A) < \cdots < \lambda_d(A)$ denote the distinct eigenvalues of $A$, and $\{\mathcal S_1(A), \ldots, \mathcal S_d(A)\}$ are the subspaces spanned by the corresponding eigenvectors. The inner product of vectors $a,b\in\mathbb{R}^n$ is denoted by $a^T b$ or $\langle a,b \rangle$. 

\section{Construction of the ASODA algorithm}\label{sec2}

Before presenting our algorithm, we briefly review the key idea of HSODF for unconstrained optimization. The HSODF employs a homogenization technique that reformulates the cubic-regularized subproblem into an eigenvalue problem of the form $\min_{\|[v;t]\|\le 1} [v;t]^T F [v;t]$, where $F$ captures both gradient and Hessian information. This reformulation eliminates the need for explicit cubic regularization terms while preserving optimal complexity. Our main contribution is extending this framework to the constrained case through a careful decomposition of the search direction and an adaptive parameter selection strategy.

We begin with an overview of the algorithm's construction. Consider the following nonlinear equality-constrained optimization problem:
\begin{subequations}\label{NLP}
\begin{align}
{\underset {x\in\mathbb{R}^n}{{\rm minimize}}}   &\ \ \  f(x) \ \\
\mbox{subject to} &\ \ \  c(x)= 0,
\end{align}
\end{subequations}
where the objective function $f: \mathbb{R}^n \rightarrow \mathbb{R}$ and the equality constraints $c: \mathbb{R}^n \rightarrow \mathbb{R}^m$ are both sufficiently smooth functions.

For this problem \eqref{NLP}, the Lagrangian function is
\begin{eqnarray*}
 \mathcal{L}(x,y):=f(x)-y^Tc(x),
\end{eqnarray*}
where $y$ is the Lagrange multiplier. We denote the Jacobian matrix of $c(x)$ by $A(x)$ and the gradient of $f$ by $g(x)$ at $x$, that is, 
\begin{equation*}
  A(x):=[\nabla c_1(x), \nabla c_2(x), \cdot\cdot\cdot , \nabla c_m(x)]^T\in\mathbb{R}^{ m\times n},\ \ g(x):=\nabla f(x)\in\mathbb{R}^n,
\end{equation*}
where $c_i(x)$ $(i=1,\cdot\cdot\cdot,m)$ is the $i$th component of $c(x)$.

Following SQP methods for handling constraints, we formulate the following cubic regularization subproblem with linearized constraints at each iteration,
\begin{subequations}\label{constrained subp}
\begin{align}
{\underset {d\in\mathbb{R}^n}{\mbox{minimize}}}   &\ \ \  f_k+g_k^Td+\frac{1}{2}d^TH_kd+\frac{1}{3}\sqrt{h_k}\|d\|^3 \ \\
\mbox{subject to} &\ \ \  A_kd+c_k=0,
\end{align}
\end{subequations}
where $f_k:=f(x_k)$, $g_k:=g(x_k)$, $A_k:=A(x_k)$, $c_k:=c(x_k)$, $H_k$ denotes the Hessian $\nabla_{xx}\mathcal{L}(x_k,\lambda_k)$ or its approximation and $h_k$ is a coefficient of the regularizer of the cubic model. In addition, we assume that ${A_k}$ has full row rank for all $k$ in this paper.

Rather than directly solving the constrained cubic regularization subproblem \eqref{constrained subp}, we adopt a composite-step approach at each iteration $k$, decomposing the overall trial step $d_k$ into a normal step $n_k$ and a tangential step $p_k$, that is,
\begin{equation*}
  d_k=n_k+p_k,
\end{equation*}
where the normal step $n_k$ is designed to enhance feasibility, and the tangential step $p_k$ aims to achieve sufficient reduction of the objective function's model.

\begin{remark}
The composite step $d_k=n_k+p_k$ is a surrogate for the exact solution of the constrained subproblem \eqref{constrained subp}. Because $n_k$ satisfies $A_kn_k+c_k=0$ up to the scaling $\alpha_k$ in \eqref{nk}, the step $d_k$ reduces the linearized constraint violation $A_kd_k+c_k$ while $p_k=Q_kp_k^Q\in\operatorname{null}(A_k)$ does not disturb feasibility to first order. Consequently, plugging $d_k=n_k+p_k$ into the model \eqref{mu} and expanding yields the decomposition displayed after \eqref{qqk} of the total model decrease into normal, tangential, and cross terms; the cross terms involving $\|n_k\|^3+\|p_k\|^3-\|d_k\|^3$ are of higher order in the small quantity $\alpha_k$ and are controlled by \eqref{deltam}. In this way the composite-step approach approximates \eqref{constrained subp} while decoupling the feasibility and optimality computations, exactly as in classical composite-step SQP methods \cite{VardiAvi}.
\end{remark}

Since the tangential step $p_k$ is intended to produce the sufficient decrease in the objective function's model, the maximum size of $n_k$ must be restricted. It is generated as follows:
\begin{eqnarray}\label{nk}
  n_k=\alpha_kn_k^c,
\end{eqnarray}
where
\begin{eqnarray}\label{aa1h}
\alpha_k\in
\left[{\rm min}\left\{1,\frac{\beta}{\|n_k^c\|(h_k)^\frac{1}{4}}\right\},
{\rm min}\left\{1,\frac{1}{\|n_k^c\|(h_k)^\frac{1}{4}}\right\}
\right],{\rm for~}\beta\in (0,1].
\end{eqnarray}
The vector $n_k^c$ is the minimum $\ell_2$-norm solution to $A(x_k)n_k+c(x_k)=0$. 

The tangential step is computed as 
\begin{equation}\label{tn}
  p_k:=Q_kp_k^Q,
\end{equation}
where the columns of $Q_k$ form an orthonormal basis of the null space of $A_k$, and $p_k^Q$ solves
\begin{equation}\label{tk}
{\mbox{minimize}}\ \ \langle g^Q_k,p^Q\rangle+\frac{1}{2}\langle p^Q,H^Q_kp^Q\rangle+\frac{1}{3}\sqrt{h_k}\|p^Q\|^3,
\end{equation}
with 
\begin{equation}\label{defHkQgkQ}
H^Q_k:=Q^T_k H_k Q_k, g^Q_k:=Q^T_k(g_k+H_k n_k).
\end{equation}

Using the homogenization trick from \cite{YeYinyu,SturmJosF,he2025HSODF}, the subproblem \eqref{tk} is reformulated as the Generalized Homogeneous Model (GHM):
\begin{equation}\label{pkQ}
    \min_{\|[v; t]\| \le 1} \psi_k(v,t;F_k)  := [v;t]^T F_k [v;t], \quad
    F_k := \begin{bmatrix}
        H_k^Q & g_{k}^Q \\
        (g_{k}^Q)^T & \delta_k
    \end{bmatrix},\; \delta_k \in \mathbb{R},
\end{equation}
where $\delta_k$ allows adaptiveness, $$p_k^Q := v_k/t_k,\ h_k := \theta_k^2/\|p_k^Q\|^2$$ with dual variable $\theta_k\ge 0$. Since this is essentially an eigenvalue problem, it can be solved via eigenvector search or the Lanczos method.

The scalar $\delta_k$ in the aggregated matrix $F_k$ of \eqref{pkQ} plays a pivotal role in the homogenization framework. It acts as a free parameter that adjusts the curvature along the $(n+1)$-th coordinate direction $t$ of the lifted space $[v;t]$, thereby tuning the geometry of the GHM subproblem without changing its eigenvalue-like structure. Its mechanism can be understood from three perspectives. First, $\delta_k$ couples with the dual variable $\theta_k$ through the optimality condition of Lemma \ref{lemma.optimal}: whenever $t_k\neq 0$, the second block row of \eqref{eq.homoeig.foc} yields $\delta_k+\theta_k=-(g_k^Q)^Tp_k^Q$, and hence varying $\delta_k$ shifts $\theta_k$ continuously. Since the regularization coefficient of the original cubic model \eqref{tk} is given by $\sqrt{h_k}=\theta_k/\|p_k^Q\|$, the parameter $\delta_k$ indirectly controls the amount of cubic regularization: this is exactly the adaptiveness alluded to above. Second, this coupling enables a simple search mechanism: in the inner loop of Algorithm \ref{alg:asoda}, the scalar $\delta_{k,j}$ is updated (e.g., by a bisection procedure on the monotone map $\delta\mapsto \sqrt{h_k(\delta)}$; see \cite{he2025HSODF,zhang2025HSODM}) until $\sqrt{h_k}$ falls into the prescribed interval $I_h$, which in turn determines the acceptance of the trial step. Third, $\delta_k$ determines whether the hard case occurs: by the ordering results for the eigenvalues of $F_k$ in \cite{he2025HSODF}, there exists a threshold $\tilde{\alpha}_1\ge \lambda_1(H_k^Q)$ such that $t_k\neq 0$ whenever $\delta_k\le \tilde{\alpha}_1$ (and $g_k^Q\not\perp S_1(H_k^Q)$); once $\delta_k$ exceeds this threshold, the minimal eigenpair of $F_k$ may yield $t_k=0$, in which case the tangential step degenerates and Algorithm \ref{alghardcase} is invoked. Finally, we note that in the unconstrained HSODF \cite{he2025HSODF}, the same scalar $\delta_k$ unifies several classical methods: suitable choices of $\delta_k$ recover, respectively, the gradient-regularized Newton method and the trust-region subproblem, which explains the term ``generalized'' in the generalized homogeneous model.

After obtaining the trial step $d_k$, we evaluate the merit function
\begin{equation}\label{muu}
  w(x,\mu):=f(x)+\mu\|c(x)\|,
\end{equation}
and its model
\begin{equation}\label{mu}
m(x,H,\mu,d):=f(x)+g(x)^T d+\frac{1}{2}d^T H d+\frac{1}{3}\sqrt{h}\|d\|^3+\mu\|c(x)+A(x)d\|.
\end{equation}

For convenience, we define the separate models,
\begin{eqnarray*}
  m^F(x,H,n)&:=&f(x)+g(x)^T n+\frac{1}{2}n^T H n+\frac{1}{3}\sqrt{h}\|n\|^3,\\
  m^N(x,n)&:=&\|c(x)+A(x)n\|,\\
  m^B(x,H,p)&:=&(g(x)+H n)^T p+\frac{1}{2}p^T H p+\frac{1}{3}\sqrt{h}\|p\|^3.
\end{eqnarray*}

The overall model decrease from $0$ to $d_k$ is
\begin{equation}\label{qqk}
  \Delta m_k:=m(x_k,H_k,\mu_k,0)-m(x_k,H_k,\mu_k,d_k).
\end{equation}
Then
\begin{equation*}
\Delta m_k=\Delta m^B_k+\mu_k\Delta m^N_k+\Delta m^F_k+\frac{\sqrt{h_k}}{3}\bigl(\|n_k\|^3+\|p_k\|^3-\|d_k\|^3\bigr),
\end{equation*}
where each $\Delta$ denotes the decrease in the corresponding model. Lemma~\ref{lem3.1} gives $\Delta m^N_k>0$, while Lemma~\ref{lem3.2} gives $\Delta m^B_k\ge0$, with $\Delta m^B_k>0$ whenever $p^Q_k\ne0$.

To ensure sufficient decrease, we require
\begin{equation}\label{deltam}
  \Delta m_k \ge \nu\mu_k\Delta m^N_k
\end{equation}
with $\nu\in(0,1)$ and $\mu_k$ sufficiently large. Following \cite{VardiAvi}, we compute the smallest value satisfying this as
\begin{equation}\label{muc}
  \mu^c_k:=-\frac{\Delta m^B_k+\Delta m^F_k+\frac{\sqrt{h_k}}{3}\bigl(\|n_k\|^3+\|p_k\|^3-\|d_k\|^3\bigr)}{(1-\nu)\Delta m^N_k},
\end{equation}
and update $\mu_k$ via
\begin{equation}\label{miuk}
  \mu_k := \begin{cases}
    {\rm max}\{\mu_k^c, \tau_1\mu_{k-1}, \mu_{k-1}+\tau_2\}, & \text{if }\mu_{k-1}<\mu_k^c,\\
    \mu_{k-1}, & \text{otherwise},
  \end{cases}
\end{equation}
where $\tau_1>1$, $\tau_2>0$.

The trial step is accepted or rejected based on the ratio
\begin{equation}\label{rhok1}
  \rho_k:=\frac{w(x_k,\mu_k)-w(x_k+d_k,\mu_k)}{m(x_k,H_k,\mu_k,0)-m(x_k,H_k,\mu_k,d_k)}.
\end{equation}
Inspired by \cite{Cartis2,Cartis5}, we classify iterations: $\rho_k\ge\eta_2$ (very successful), $\eta_1\le\rho_k<\eta_2$ (successful), and $\rho_k<\eta_1$ (unsuccessful). The interval $I_h$ is adjusted dynamically to maintain numerical stability.

The complete algorithm is presented in Algorithm~\ref{alg:asoda}.

\begin{algorithm}
\caption{The Adaptive Second-Order Descent Algorithm for nonlinear equality-constrained optimization (ASODA)}\label{alg:asoda}
\begin{algorithmic}[1]
\Require $x_0\in\mathbb{R}^n$, $\delta_0\in\mathbb{R}$, $I_h=\mathbb{R}$, $h_{\min}>0$, $0<\eta_1<\eta_2<1$, $\gamma_1>1$, $\gamma_3\ge\gamma_2>1$, $0<\gamma_4\le1$, $\mu_0>0$, $0<\nu<1$, $\tau_1>1$, $\tau_2>0$, $\kappa_c>0$, $\kappa_g>0$, $\kappa_H>0$, $\sigma_b>0$, $\epsilon>0$
\State $k \leftarrow 0$
\While{$\|c(x_k)\| > \kappa_c \epsilon$ or $\|g_k^Q\| > \kappa_g \epsilon$ or $\lambda_{\min}(H_k^Q) < -\kappa_H \sqrt{\epsilon}$}
\State Compute normal step $n_k$ by \eqref{nk};
\State Let $\delta_{k,0}=\delta_{k-1}$ (with $\delta_{-1}:=\delta_0$);
\For{$j=0,1,2,\dots$}
\State Obtain the solution $(\theta_{k,j},[v_{k,j};t_{k,j}])$ of
\begin{equation*}
\min_{\|[v;t]\|\le 1} \begin{bmatrix}v \\ t\end{bmatrix}^T
\begin{bmatrix}
H_{k}^Q & g_{k}^Q \\
(g_{k}^Q)^T & \delta_{k,j}
\end{bmatrix}
\begin{bmatrix}v \\ t\end{bmatrix};
\end{equation*}
\If{$t_{k,j}=0$}
\State Go to Algorithm~\ref{alghardcase} (hard case);
\State Break;
\EndIf
\State Set $p^Q_{k,j}=v_{k,j}/t_{k,j}$, $h_k=\theta_{k,j}^2/\|p^Q_{k,j}\|^2$;
\If{$\sqrt{h_k}\in I_h$ within tolerance $\sigma_b$}
\State Set $p^Q_k=p^Q_{k,j}$, $\delta_k=\delta_{k,j}$, $d_k=n_k+Q_k p^Q_k$;
\State Break;
\EndIf
\State Update $\delta_{k,j}$;
\EndFor
\State Compute $\mu^c_k$ by \eqref{muc} and update $\mu_k$ from \eqref{miuk};
\State Compute $\rho_k$ by \eqref{rhok1};
\If{$\rho_k\ge \eta_2$}
\State $I_h=[\max\{\sqrt{h_{\min}},\gamma_4\sqrt{h_k}\},\sqrt{h_k}]$, $x_{k+1}=x_k+d_k$;
\ElsIf{$\eta_1\le \rho_k<\eta_2$}
\State $I_h=[\sqrt{h_k}/\gamma_1,\gamma_2\sqrt{h_k}]$, $x_{k+1}=x_k+d_k$;
\Else
\State $I_h=[\gamma_2\sqrt{h_k},\gamma_3\sqrt{h_k}]$, $x_{k+1}=x_k$;
\EndIf
\State $k \leftarrow k+1$
\EndWhile
\end{algorithmic}
\end{algorithm}

\noindent\emph{Terminology.} Throughout, the term ``parameter-free'' refers to the absence of problem-dependent or asymptotic parameters (such as a Lipschitz constant of the Hessian or an a priori penalty threshold): every scalar appearing in Algorithm \ref{alg:asoda} (e.g.\ $\delta_0$, $h_{\min}$, $\eta_1$, $\eta_2$, $\gamma_1$--$\gamma_4$, $\mu_0$, $\nu$, $\tau_1$, $\tau_2$, $\sigma_b$) is a numerical constant independent of the problem data and of $\epsilon$, and the regularization coefficient $\sqrt{h_k}$ is determined adaptively by the bisection procedure rather than prescribed.

In the inner loop of Algorithm \ref{alg:asoda}, the GHM subproblem \eqref{pkQ} may encounter the so-called \emph{hard case}, where the optimal solution $[v_{k,j};t_{k,j}]$ satisfies $t_{k,j}=0$ and hence the tangential step $p^Q_{k,j}=v_{k,j}/t_{k,j}$ is not well defined. Geometrically, the hard case arises when the reduced Hessian $H_k^Q$ has a negative eigenvalue $\lambda_1=\lambda_{\min}(H_k^Q)<0$ and the reduced gradient $g_k^Q$ is orthogonal to the corresponding eigenspace $S_1(H_k^Q)$, so that the minimal eigenpair of the aggregated matrix $F_k$ degenerates to $[v_k;0]$ and no finite tangential step can be extracted. In this situation, the flexibility of the homogenization framework offers a remedy: instead of modifying the subproblem itself, we perturb the reduced gradient $g_k^Q$ \emph{within} the null-space basis $Q_k$ and \emph{along} the direction $v_k$ of the minimal eigenvector, by an amount proportional to the squared minimal curvature $\lambda_1^2$, where $\lambda_1=\lambda_{\min}(H_k^Q)$ and $v_k$ denotes the corresponding unit eigenvector. Two design choices make this perturbation harmless. First, the factor $\lambda_1^2$ (rather than $\lambda_1$) is second order in the degeneracy, so the perturbation vanishes continuously as $\lambda_1\to 0^-$ and the reduced Hessian becomes positive semidefinite; consequently it does not alter the model in the non-degenerate regime. Second, the perturbation is divided by the regularization parameter $h_{k,i-1}$, so that it shrinks as $h$ increases, keeping the perturbed gradient arbitrarily close to $g_k^Q$ and preserving the model-decrease bounds used in the convergence analysis, while it remains strong enough to break the degeneracy that causes the hard case (it makes $t_{k,j}\neq 0$ and hence $p^Q_{k,j}$ well defined). The detailed procedure is summarized in Algorithm \ref{alghardcase}; its convergence analysis is deferred to Subsection \ref{sec:hardcase}. In the algorithm below the base iterate $x_k$ is kept fixed and the index $i$ enumerates the successive hard-case attempts, so that any quantity with subscript $(k,i)$ (e.g.\ $h_{k,i}$, $g_{k,i}^Q$, $p^Q_{k,i}$) is evaluated at the $i$-th attempt, whereas $n_k$, $Q_k$ and $H_k$ remain those at the base point $x_k$.

\begin{algorithm}
\caption{Perturbation for the Hard Case}\label{alghardcase}
\begin{algorithmic}[1]
\Require Iterate $k$, ${x_k} \in \mathbb{R}^n$, $g_k^Q$, $H_k^Q$, $h_{k-1}$, $\delta_{k-1}$ where $g_k^Q\perp S_1(H_k^Q)$\;
\For{$i = 0, 1, \dots$}
\State Set
        \begin{equation*}
            g_{k,i}^Q=g_{k}^Q+\frac{\varsigma}{\gamma_3^{2} h_{k,i-1}}\lambda_1^2 v_{k}
        \end{equation*} 
\State Compute $I_h := \left[\gamma_2 \sqrt{h_{k,i-1}}, \gamma_3 \sqrt{h_{k,i-1}}\right]$\;

\Repeat 
\State Obtain the solution $(\theta_{k,i,j},[v_{k,i,j};t_{k,i,j}])$  of the GHM subproblem
        \begin{equation*}
            \min_{\|[v;t]\| \leq 1} \begin{bmatrix}v \\ t\end{bmatrix}^T
            \begin{bmatrix}
                H_{k}^Q          & g_{k,i}^Q     \\
                (g_{k,i}^Q)^T & \delta_{k,i,j}
            \end{bmatrix}
            \begin{bmatrix}v \\ t\end{bmatrix}
        \end{equation*}
\State Set $p^Q_{k,i,j} = v_{k,i,j}/t_{k,i,j}$, $h_{k,i}=\frac{\theta_{k,i,j}^2}{\|p^Q_{k,i,j}\|^2}$ \;
\State Update $\delta_{k,i,j}$, increase $j = j+1$
\Until{$\sqrt{h_{k,i}} \in I_h$ within tolerance $\sigma_b$}

\State $p^Q_{k,i} = p^Q_{k,i,j}$; $\delta_{k,i}=\delta_{k,i,j}$; $d_{k,i,j}=n_k+Q_kp_{k,i}^Q$ \;
\State Compute $\mu_{k,i}^c$ by \eqref{muc} and update $\mu_{k,i}$ from \eqref{miuk}\;
\State Compute
\begin{eqnarray*}
  \rho_{k,i}=\frac{w(x_k,\mu_{k,i})-w(x_k+d_{k,i},\mu_{k,i})}{m(x_k,H_k,\mu_{k,i},0)-m(x_k,H_k,\mu_{k,i},d_{k,i})}.
\end{eqnarray*}
\If{$\rho_{k,i} \geq \eta_1$}
\State break
\EndIf
\EndFor
\end{algorithmic}
\end{algorithm}

\section{Global iteration complexity analysis}\label{sec3}
In this section, we shall prove that the iterates generated by ASODA (Algorithm \ref{alg:asoda}) are globally convergent to an $\epsilon$-approximate second-order stationary point of the problem \eqref{NLP}. Some preliminary results are given in Subsection \ref{Preliminary results}. The feasibility and the global iteration complexity are shown in the next two subsections respectively. The discussion on the hard case is given in the last Subsection \ref{sec:hardcase}.

Let $\{x_k\}$ be the infinite sequence generated by Algorithm \ref{alg:asoda}. We analyze the convergence of ASODA to an $\epsilon$-approximate second-order stationary point under the following assumptions.
\begin{description}
\item[(G1)] $f(x)$ and $c(x)$ have Lipschitz continuous second derivatives in an open set $C\supseteq\{x_k\}$. $g(x)$, $\nabla c_i(x)$, and $Q(x)$ are Lipschitz continuous in the open set $C$.
\item[(G2)] There exist constants $f_{\min}$, $M_g$, and $M_c$ such that $f(x_k) \geq f_{\min}$ for all $k$, and $\max\{\|g(x_k)\|, \|c(x_k)\|\} \leq \max\{M_g, M_c\}$ for all $k$.
\item[(G3)] $\{H_k\}$ is uniformly bounded, i.e., there exists a positive constant $M_{\rm H}$ such that $\|H_k\|\le M_{\rm H}$ for all $k$.
\item[(G4)] There exists $\kappa_{\rm n}>0$ such that 
\begin{equation}\label{nkc}
\|n_k^c\|\le\kappa_{\rm n}\|c(x_k)\|
\end{equation} 
for all $k$.
\end{description}
\begin{remark}
Assumption $(G2)$ postulates the boundedness of the objective and constraint values along the generated sequence. This is a standard device in worst-case complexity analysis: it ensures that the Lipschitz constants of Assumption $(G1)$ are effective on the iteration path $\{x_k\}$, so that the model-error and decrease estimates below hold uniformly in $k$.
\end{remark}
\begin{remark}
Assumption $(G4)$ restricts the normal step magnitude and can be satisfied under mild conditions (see \cite{SchnabelB,DennisJr}).
\end{remark}
\subsection{Preliminary results}\label{Preliminary results}

We have the following result about the decrease of $\Delta m_k^N$ in the normal step.
\begin{lemma}\label{lem3.1}
Assume that $n_k$ is given by \eqref{nk}, where $n_k^c$ (the minimum $\ell_2$-norm solution to $A_k n + c_k = 0$) and $\alpha_k$ satisfy Assumption $(G4)$ and \eqref{aa1h}. Then we have that
\begin{eqnarray}\label{lem1}
  \|c(x_k)\|\geq \Delta m_k^N \geq{\rm min}\left\{\|c(x_k)\|, \frac{\beta}{\kappa_{\rm n}(h_k)^\frac{1}{4}}\right\}
\end{eqnarray}
for all $k\geq0$.
\end{lemma}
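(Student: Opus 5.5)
The plan is to compute $\Delta m_k^N$ exactly, using the fact that $n_k$ is a scalar multiple of a point where the linearized constraint vanishes. By definition, $\Delta m_k^N = m^N(x_k,0)-m^N(x_k,n_k) = \|c_k\| - \|c_k + A_k n_k\|$. Since $A_k n_k^c = -c_k$ and $n_k=\alpha_k n_k^c$, we get $c_k + A_k n_k = (1-\alpha_k)c_k$. From \eqref{aa1h}, $\alpha_k\in(0,1]$, because both endpoints of the interval are positive and at most $1$. Hence $\|c_k+A_kn_k\|=(1-\alpha_k)\|c_k\|$, and therefore
\begin{equation*}
\Delta m_k^N=\alpha_k\|c_k\|.
\end{equation*}
The upper bound $\Delta m_k^N\le\|c(x_k)\|$ then follows immediately from $\alpha_k\le 1$.

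For the lower bound I would use the left endpoint of \eqref{aa1h}, which gives $\alpha_k\ge\min\{1,\beta/(\|n_k^c\|h_k^{1/4})\}$. Multiplying by $\|c_k\|$ gives
\begin{equation*}
\Delta m_k^N\ge\min\Bigl\{\|c_k\|,\frac{\beta\|c_k\|}{\|n_k^c\|h_k^{1/4}}\Bigr\}.
\end{equation*}
Assumption (G4) gives $\|n_k^c\|\le\kappa_{\rm n}\|c_k\|$, so the second entry is at least $\beta/(\kappa_{\rm n}h_k^{1/4})$. This yields \eqref{lem1}.

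The only delicate point is the degenerate case $c_k=0$. There $n_k^c=0$, the expression $\beta/(\|n_k^c\|h_k^{1/4})$ is undefined, and the right-hand side of \eqref{lem1} equals $\min\{0,\cdot\}=0$. I would treat this case separately: $n_k=0$ (any $\alpha_k$ works, or one takes $\alpha_k=1$ by convention), so $\Delta m_k^N=0$ and both inequalities hold trivially. In the case $c_k\neq0$, full row rank of $A_k$ ensures $n_k^c\ne0$, so the formula is well defined. One should also note that $h_k>0$ is implicitly required for $h_k^{1/4}$ in \eqref{aa1h} to make sense, which is guaranteed by the interval $I_h$ being bounded below by $\sqrt{h_{\min}}>0$. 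I expect no real obstacle beyond handling these edge cases.
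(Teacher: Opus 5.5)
Your proposal is correct and follows the paper's proof essentially line for line. Like the paper, you compute $\Delta m_k^N=\alpha_k\|c(x_k)\|$ from $A_kn_k^c=-c_k$, take the upper bound from $\alpha_k\le 1$, and take the lower bound from the left endpoint of \eqref{aa1h} combined with (G4). Your separate treatment of the degenerate case $c(x_k)=0$ is a small improvement: the paper's argument divides by $\|n_k^c\|$ there without comment, and only addresses that case later, in Lemma \ref{lem3.7}.
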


\begin{proof}
Recall that $n_k^c$ is the minimum $\ell_2$-norm solution to $A_k n_k^c + c_k = 0$, hence $A_k n_k^c + c_k = 0$. According to the definition of $m^N(x,n)$ and the normal step \eqref{nk}, we have that
\begin{eqnarray}\label{l11}
\begin{aligned}
  \Delta m_k^N &= \|c(x_k)\|-\|c(x_k)+A_kn_k\| \\
  &= \|c(x_k)\|-\|c(x_k)+\alpha_k(A_kn_k^c+c(x_k))-\alpha_kc(x_k)\| \\
  &= \|c(x_k)\|-(1-\alpha_k)\|c(x_k)\| \\
  &= \alpha_k\|c(x_k)\|.
  \end{aligned}
\end{eqnarray}
Because $0<\alpha_k \leq1$ from \eqref{aa1h}, we have that
\begin{eqnarray*}
  \Delta m_k^N=\alpha_k\|c(x_k)\|\leq \|c(x_k)\|.
\end{eqnarray*}
From \eqref{l11} and \eqref{aa1h}, we have that
\begin{eqnarray*}
  \Delta m_k^N=\alpha_k\|c(x_k)\|\geq \mbox{min}\left\{\|c(x_k)\|, \frac{\beta\|c(x_k)\|}{\|n_k^c\|(h_k)^\frac{1}{4}}\right\}.
\end{eqnarray*}
From Assumption $(G4)$, we have that
\begin{eqnarray*}
  \frac{\|c(x_k)\|}{\|n_k^c\|}\geq \frac{1}{\kappa_{\rm n}}.
\end{eqnarray*}
Thus we obtain that
\begin{eqnarray*}
  \Delta m_k^N \geq {\rm min}\left\{\|c(x_k)\|, \frac{\beta}{\kappa_{\rm n}(h_k)^\frac{1}{4}}\right\}.
\end{eqnarray*}
Thus the lemma is proved.
\qed
\end{proof}

The following result shows that the normal step is bounded because the total reduction of the normal model is bounded.

\begin{lemma}\label{lem3.7}
Assume that Assumption $(G4)$ holds. Then there exists a constant $\kappa_{\rm mn}>0$ such that
\begin{eqnarray}\label{kbon1}
 \kappa_{\rm mn}\|n_k\|\leq\Delta m^N_k \leq\|c(x_k)\|
\end{eqnarray}
for all $k\geq0$.
\end{lemma}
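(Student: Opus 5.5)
The plan is to reuse the identity from the proof of Lemma \ref{lem3.1}, namely $\Delta m_k^N=\alpha_k\|c(x_k)\|$ (see \eqref{l11}), together with the definition $n_k=\alpha_k n_k^c$ from \eqref{nk}. The upper bound $\Delta m_k^N\le\|c(x_k)\|$ is already the first inequality in \eqref{lem1}, so only the lower bound $\kappa_{\rm mn}\|n_k\|\le\Delta m_k^N$ needs an argument.

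For the lower bound we combine the two facts. Since $\alpha_k\ge0$, we have $\|n_k\|=\alpha_k\|n_k^c\|$. Assumption $(G4)$, i.e.\ \eqref{nkc}, then gives
\begin{equation*}
\|n_k\|=\alpha_k\|n_k^c\|\le\alpha_k\kappa_{\rm n}\|c(x_k)\|=\kappa_{\rm n}\Delta m_k^N .
\end{equation*}
Setting $\kappa_{\rm mn}:=1/\kappa_{\rm n}$ yields $\kappa_{\rm mn}\|n_k\|\le\Delta m_k^N$ for every $k\ge0$.

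One edge case must be handled. If $c(x_k)=0$, then $n_k^c=0$ because it is the minimum-norm solution, so $n_k=0$ and both sides vanish. In that situation the quantity $\|n_k^c\|(h_k)^{1/4}$ in \eqref{aa1h} is zero, and $\alpha_k$ should be read as any value in $(0,1]$; since $n_k=0$ anyway, this choice does not matter.

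I do not expect a genuine obstacle. The only points needing care are the degenerate case $c(x_k)=0$ and the observation that $\kappa_{\rm mn}$ depends only on $\kappa_{\rm n}$, so it is independent of $k$, $h_k$, and $\epsilon$. That independence is what later complexity bounds will rely on.
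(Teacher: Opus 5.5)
Your proof is correct, and it is shorter than the paper's. The paper does not use the exact identity $\Delta m_k^N=\alpha_k\|c(x_k)\|$ for the lower bound. It starts instead from the lower bound $\Delta m_k^N\ge\min\{\|c(x_k)\|,\beta/(\kappa_{\rm n}h_k^{1/4})\}$ of Lemma~\ref{lem3.1} and splits on whether $\sqrt{h_k}\ge(\kappa_{\rm n}\|c(x_k)\|/\beta)^{-2}$. In the first case it pairs $\Delta m_k^N\ge\beta/(\kappa_{\rm n}h_k^{1/4})$ with $\|n_k\|\le h_k^{-1/4}$, which comes from the upper end of the interval \eqref{aa1h}. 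In the second case it pairs $\Delta m_k^N\ge\|c(x_k)\|$ with $\|n_k\|\le\kappa_{\rm n}\|c(x_k)\|$. Both cases end with $\kappa_{\rm mn}=\beta/\kappa_{\rm n}$.

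You observe that $\|n_k\|$ and $\Delta m_k^N$ are both exactly proportional to $\alpha_k$, so $\|n_k\|/\Delta m_k^N=\|n_k^c\|/\|c(x_k)\|$ is controlled by $(G4)$ alone. This buys three things: no case split on $h_k$ is needed, the interval \eqref{aa1h} is used only through $0<\alpha_k\le1$ (so your argument holds for any such scaling of the minimum-norm step), and you get the constant $\kappa_{\rm mn}=1/\kappa_{\rm n}$, which is at least as strong as $\beta/\kappa_{\rm n}$ since $\beta\le1$.

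The paper's route produces the $h_k$-scaled bound $\|n_k\|\le h_k^{-1/4}$ (display \eqref{nkdejie}) along the way, and that bound is reused later in Lemma~\ref{lem3.4}. It is not needed for the present lemma, however, and it holds independently of the case split anyway. Your treatment of $c(x_k)=0$ is fine. In fact the identity $\Delta m_k^N=\alpha_k\|c(x_k)\|$ already covers that case, and with the convention $\beta/0=+\infty$ the interval \eqref{aa1h} simply forces $\alpha_k=1$.
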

\begin{proof}
We consider the inequality \eqref{kbon1} in two cases.

When $c(x_k)=0$, we have $n_k=0$ from \eqref{nkc} and \eqref{nk}, and the conclusion holds obviously.

When $c(x_k)\neq0$, Lemma \ref{lem3.1} gives that
\begin{eqnarray}\label{mkN}
 \Delta m^N_k \geq {\rm min}\left\{\|c(x_k)\|, \frac{\beta}{\kappa_{\rm n}(h_k)^\frac{1}{4}}\right\}.
\end{eqnarray}
If
\begin{eqnarray*}
\sqrt{h_k} \geq \left(\frac{\kappa_{\rm n}\|c(x_k)\|}{\beta}\right)^{-2},
\end{eqnarray*}
we have that from \eqref{mkN}
\begin{eqnarray*}
 \Delta m^N_k \geq \frac{\beta}{\kappa_{\rm n}(h_k)^\frac{1}{4}},
 \end{eqnarray*}
and also from \eqref{nk} and \eqref{aa1h}, we have that
\begin{eqnarray}\label{nkdejie}
\|n_k\| \leq \frac{1}{(h_k)^\frac{1}{4}},
\end{eqnarray}
thus
\begin{eqnarray}\label{sigmamkn1}
 \Delta m^N_k \geq \frac{\beta}{\kappa_{\rm n}}\|n_k\| = \kappa_{\rm mn}\|n_k\|,
\end{eqnarray}
where $\kappa_{\rm mn}=\frac{\beta}{\kappa_{\rm n}}$.
\\If
\begin{eqnarray*}
 \sqrt{h_k} < \left(\frac{\kappa_{\rm n}\|c(x_k)\|}{\beta}\right)^{-2},
\end{eqnarray*}
from \eqref{mkN} we have that
\begin{eqnarray}\label{mkN2}
  \Delta m^N_k \geq\|c(x_k)\|.
\end{eqnarray}
Since the fact that $0<\alpha_k\leq1$ and \eqref{nkc}, we can get that
\begin{eqnarray}\label{star}
 \|n_k\|=|\alpha_k|\|n^c_k\|\leq\|n^c_k\|\leq\kappa_{\rm n}\|c(x_k)\|.
\end{eqnarray}
From the above two inequalities, since $0<\beta\leq1$ we have that
\begin{eqnarray}\label{sigmamkn2}
  \Delta m^N_k \geq\frac{1}{\kappa_{\rm n}}\|n_k\|\geq\frac{\beta}{\kappa_{\rm n}}\|n_k\|=\kappa_{\rm mn}\|n_k\|.
\end{eqnarray}
Then the first inequality in \eqref{kbon1} holds by \eqref{sigmamkn1} and \eqref{sigmamkn2}. We know again that $\Delta m^N_k\leq\|c(x_k)\|$ from \eqref{lem1}. Therefore, the conclusion is true.
\qed
\end{proof}

We now turn to the tangential step $p_k$, and give the optimality condition of \eqref{pkQ}.
\begin{lemma}\label{lemma.optimal}
    $[v_k; t_k]$ is the optimal solution of the subproblem \eqref{pkQ} if and only if there exists a dual variable $\theta_k \ge 0$ such that
    \begin{align}
        \label{eq.homoeig.soc}
                                    & \begin{bmatrix}
                                          H_k^Q + \theta_k \cdot I & g_k^Q            \\
                                          (g_k^Q) ^T              & \delta_k+\theta_k
                                      \end{bmatrix} \succeq 0, \\
        \label{eq.homoeig.foc}
                                    & \begin{bmatrix}
                                          H_k^Q + \theta_k \cdot I & g_k^Q            \\
                                          (g_k^Q) ^T              & \delta_k+\theta_k
                                      \end{bmatrix}
        \begin{bmatrix}
            v_k \\ t_k
        \end{bmatrix} = 0,                                                       \\
        \label{eq.homoeig.norm one} & \theta_k\cdot(\|[v_k; t_k]\| - 1) = 0.
    \end{align}
\end{lemma}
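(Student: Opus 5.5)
The subproblem \eqref{pkQ} minimizes the quadratic form $z^TF_kz$, with $z=[v;t]\in\mathbb{R}^{q+1}$ ($q$ the column dimension of $Q_k$), over the unit ball $\|z\|\le 1$. This is a trust-region subproblem with zero linear term, so the plan is to invoke the classical necessary and sufficient optimality conditions for trust-region subproblems (Gay, Moré--Sorensen). Applied to $\min\{z^TBz+2b^Tz:\|z\|\le\Delta\}$, these say that $z^*$ is a global minimizer if and only if there is $\theta\ge0$ with $B+\theta I\succeq0$, $(B+\theta I)z^*=-b$, $\|z^*\|\le\Delta$ and $\theta(\|z^*\|-\Delta)=0$. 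Taking $B=F_k$, $b=0$, $\Delta=1$, and noting that $F_k+\theta_kI$ is exactly the matrix in \eqref{eq.homoeig.soc}--\eqref{eq.homoeig.foc}, gives the three conditions at once (feasibility $\|[v_k;t_k]\|\le1$ being implicit in "solution of \eqref{pkQ}").

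Because the linear term vanishes, I would also give a short self-contained argument via the Rayleigh quotient. Let $\lambda_{\min}=\lambda_{\min}(F_k)$. The optimal value is $\min\{\lambda_{\min},0\}$.

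For \emph{sufficiency}, suppose $\theta_k\ge0$ satisfies the three conditions. For any feasible $z$,
\[
z^TF_kz\ge -\theta_k\|z\|^2\ge-\theta_k,
\]
using \eqref{eq.homoeig.soc} and $\|z\|\le1$. At the candidate $z_k=[v_k;t_k]$, \eqref{eq.homoeig.foc} gives $z_k^TF_kz_k=-\theta_k\|z_k\|^2$, and \eqref{eq.homoeig.norm one} makes this equal to $-\theta_k$. So $z_k$ attains the lower bound and is optimal.

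For \emph{necessity}, set $\theta_k=\max\{-\lambda_{\min},0\}$, so \eqref{eq.homoeig.soc} holds automatically. Then split into cases.
\begin{itemize}
\item If $\lambda_{\min}<0$, the optimal value is $\lambda_{\min}$. Optimality of $z_k$ forces $\lambda_{\min}\le z_k^TF_kz_k/\|z_k\|^2\cdot\|z_k\|^2$, which requires $\|z_k\|=1$ and $z_k$ to be an eigenvector for $\lambda_{\min}$. This gives \eqref{eq.homoeig.foc} and \eqref{eq.homoeig.norm one}.
\item If $\lambda_{\min}\ge0$, the optimal value is $0$ and $\theta_k=0$. Then $z_k^TF_kz_k=0$ with $F_k\succeq0$, which implies $F_kz_k=0$; this is \eqref{eq.homoeig.foc}, and \eqref{eq.homoeig.norm one} is trivial.
\end{itemize}

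The main point needing care is the necessity direction in the case $\lambda_{\min}<0$. There one must justify that the minimizer lies on the boundary and is an eigenvector. The cleanest route is to expand $z_k$ in the eigenbasis of $F_k$: from $z_k^TF_kz_k\ge\lambda_{\min}\|z_k\|^2\ge\lambda_{\min}$, equality throughout forces both $\|z_k\|=1$ and all weight on the $\lambda_{\min}$-eigenspace.
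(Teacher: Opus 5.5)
Your proposal is correct and takes the paper's route. The paper omits the proof and simply appeals to the standard trust-region optimality conditions (Sorensen, Mor\'e--Sorensen) with $B=F_k$, zero linear term and radius $1$, exactly as in your first paragraph. Your supplementary Rayleigh-quotient argument is also sound, and you are right that feasibility $\|[v_k;t_k]\|\le 1$ must be read as implicit: when $\theta_k=0$, the three displayed conditions alone do not enforce it.
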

We omit the proof here since it can be derived from the optimality condition of the standard trust-region subproblem, see \cite{Sorensen,Jorge}.
\begin{lemma}
If $t_k \neq 0$, the optimal primal-dual solution $([v_k;t_k],-\theta_k)$ of \eqref{pkQ} satisfies the following optimality conditions:
\begin{align}
g_k^Q+H^Q_kp_k^Q+\sqrt{h_k}\|p^Q_k\|p^Q_k &=0\label{TKM1},\\
H^Q_k+\sqrt{h_k}\|p^Q_k\| &\succeq0\label{TKM2}.
\end{align}
\end{lemma}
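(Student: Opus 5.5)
The plan is to read both conditions directly off Lemma~\ref{lemma.optimal}, using $t_k\neq0$ to rescale by $t_k$ and then translate $\theta_k$ into $\sqrt{h_k}\|p_k^Q\|$ through the definitions $p_k^Q=v_k/t_k$ and $h_k=\theta_k^2/\|p_k^Q\|^2$.

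\emph{Step 1: the first-order condition \eqref{TKM1}.} Take the first block row of \eqref{eq.homoeig.foc}:
\[
(H_k^Q+\theta_k I)v_k+g_k^Q t_k=0 .
\]
Divide by $t_k\neq0$ to get
\[
g_k^Q+H_k^Q p_k^Q+\theta_k p_k^Q=0 .
\]
Now express $\theta_k$ in terms of $h_k$. Since $\theta_k\ge0$, we have $\sqrt{h_k}=\theta_k/\|p_k^Q\|$, i.e.\ $\theta_k=\sqrt{h_k}\,\|p_k^Q\|$, whenever $p_k^Q\neq0$. Substituting gives \eqref{TKM1}.

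The degenerate case $p_k^Q=0$ means $v_k=0$. The first row then forces $g_k^Q=0$, so \eqref{TKM1} holds trivially. Here $h_k$ is undefined (or may be set by convention), and the term $\sqrt{h_k}\|p_k^Q\|$ should be read as $\theta_k$. I would note this convention explicitly.

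\emph{Step 2: the second-order condition \eqref{TKM2}.} This follows from \eqref{eq.homoeig.soc}. A principal submatrix of a positive semidefinite matrix is positive semidefinite. Restricting \eqref{eq.homoeig.soc} to test vectors of the form $[u;0]$ therefore gives
\[
H_k^Q+\theta_k I\succeq0 ,
\]
which is \eqref{TKM2} once $\theta_k=\sqrt{h_k}\|p_k^Q\|$ is substituted and the scalar is read as a multiple of $I$.

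\emph{Step 3: the sign convention and the remaining row.} The pair is written as $([v_k;t_k],-\theta_k)$ because the multiplier of $\|[v;t]\|^2\le1$ enters with the opposite sign under the statement's convention. This needs no separate argument. For completeness, the second block row gives $(g_k^Q)^Tp_k^Q+\delta_k+\theta_k=0$, which is consistent with the discussion preceding the algorithm, but it is not needed for either \eqref{TKM1} or \eqref{TKM2}.

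The only real subtlety is the bookkeeping between $\theta_k$ and $\sqrt{h_k}\|p_k^Q\|$, in particular that $\theta_k\ge0$ makes the square root unambiguous. The $p_k^Q=0$ case is the one point I would handle explicitly rather than gloss over.
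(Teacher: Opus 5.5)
Your proposal is correct and follows essentially the same route as the paper. You divide the first block row of \eqref{eq.homoeig.foc} by $t_k$, substitute $\theta_k=\sqrt{h_k}\|p_k^Q\|$ (unambiguous since $\theta_k\ge0$), and read $H_k^Q+\theta_k I\succeq0$ off the leading block of \eqref{eq.homoeig.soc}. Your explicit treatment of the degenerate case $p_k^Q=0$, where $h_k$ is undefined and $g_k^Q=0$ is forced, is a small piece of care that the paper's proof omits.
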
 

\begin{proof}
When $t_k \neq 0$, from Lemma \ref{lemma.optimal} we can get that
\begin{equation}
\label{eq.homoeig.foc t neq 0}
(g_k^Q)^T p_k^Q = -(\delta_k +\theta_k) \quad \text{and} \quad (H_k^Q+\theta_k \cdot I)p_k^Q = -g_k^Q.
\end{equation}

Thus we can get conclusion by substituting $\theta_k=\sqrt{h_k}\|p_k^Q\|$ into \eqref{eq.homoeig.foc t neq 0} and $H_k^Q+\theta_k I\succeq 0$. 
\qed
\end{proof}
Thus the solution $p_k^Q$ to problem \eqref{tk} can be obtained by solving the primal-dual of problem \eqref{pkQ}. Based on the optimal conditions, the subsequent lemma establishes a lower bound for the decrease induced by the tangential step $p_k$.

\begin{lemma}\label{lem3.2}
Assume that the hard case of Algorithm \ref{alghardcase} does not occur at $x_k$. Then the following model decrease
\begin{eqnarray}\label{lema2}
  \Delta m^B_k\geq \frac{1}{6}\sqrt{h_k}\|p^Q_k\|^3
\end{eqnarray}
for all $k\geq 0$.
\end{lemma}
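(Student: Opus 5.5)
Since the hard case does not occur, $t_k\neq 0$ and the optimality conditions \eqref{TKM1}--\eqref{TKM2} of the preceding lemma are available for $p_k^Q=v_k/t_k$. The plan is to express $\Delta m^B_k=-m^B(x_k,H_k,p_k)$ entirely in the reduced coordinates and then eliminate the linear term with \eqref{TKM1}. Because $p_k=Q_kp_k^Q$ and $Q_k$ has orthonormal columns, we have $\|p_k\|=\|p_k^Q\|$, $(g_k+H_kn_k)^Tp_k=(g_k^Q)^Tp_k^Q$ and $p_k^TH_kp_k=(p_k^Q)^TH_k^Qp_k^Q$, by the definitions \eqref{defHkQgkQ}. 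Hence
\begin{equation*}
\Delta m^B_k=-(g_k^Q)^Tp_k^Q-\tfrac12 (p_k^Q)^TH_k^Qp_k^Q-\tfrac13\sqrt{h_k}\|p_k^Q\|^3 .
\end{equation*}

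Next, take the inner product of \eqref{TKM1} with $p_k^Q$. This gives $-(g_k^Q)^Tp_k^Q=(p_k^Q)^TH_k^Qp_k^Q+\sqrt{h_k}\|p_k^Q\|^3$. Substituting yields
\begin{equation*}
\Delta m^B_k=\tfrac12 (p_k^Q)^TH_k^Qp_k^Q+\tfrac23\sqrt{h_k}\|p_k^Q\|^3 .
\end{equation*}

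Finally, use \eqref{TKM2}, i.e.\ $H_k^Q+\sqrt{h_k}\|p_k^Q\|I\succeq0$, in the form $(p_k^Q)^TH_k^Qp_k^Q\ge-\sqrt{h_k}\|p_k^Q\|^3$. This gives
\begin{equation*}
\Delta m^B_k\ge -\tfrac12\sqrt{h_k}\|p_k^Q\|^3+\tfrac23\sqrt{h_k}\|p_k^Q\|^3=\tfrac16\sqrt{h_k}\|p_k^Q\|^3,
\end{equation*}
which is \eqref{lema2}.

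The only delicate point is bookkeeping rather than analysis. We must check that the $\sqrt{h}$ appearing in $m^B$ is the same $h_k=\theta_k^2/\|p_k^Q\|^2$ produced by the GHM, so that $\theta_k=\sqrt{h_k}\|p_k^Q\|$ exactly. We must also check that the reduced gradient in \eqref{TKM1} is $Q_k^T(g_k+H_kn_k)$, matching the linear term of $m^B$, which includes the $H_kn_k$ shift. If $p_k^Q=0$, both sides vanish and the bound holds trivially, so no division by $\|p_k^Q\|$ is needed in the final inequality.
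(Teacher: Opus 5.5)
Your proposal is correct and follows essentially the same argument as the paper: rewrite $\Delta m^B_k$ in reduced coordinates via $p_k=Q_kp_k^Q$, eliminate the linear term by taking the inner product of \eqref{TKM1} with $p_k^Q$, and conclude with \eqref{TKM2}. The paper phrases the final step as splitting off $\frac12(p_k^Q)^T(H_k^Q+\sqrt{h_k}\|p_k^Q\|I)p_k^Q\ge 0$, which is exactly the inequality $(p_k^Q)^TH_k^Qp_k^Q\ge-\sqrt{h_k}\|p_k^Q\|^3$ that you use.
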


\begin{proof}
According to the definition of $m^B(x,H,p)$ and \eqref{tn}, and noting that $\|Q_k p_k^Q\| = \|p_k^Q\|$ because $Q_k$ has orthonormal columns, we have that
\begin{eqnarray*}
  &&\Delta m^B_k = m^B(x_k,H_k,0)-m^B(x_k,H_k,p_k)\\
  &=& -(g(x_k)+H_kn_k)^Tp_k-\frac{1}{2}p_k^TH_kp_k-\frac{1}{3}\sqrt{h_k}\|p_k\|^3 \\
  &=& -(g(x_k)+H_kn_k)^TQ_kp_k^Q-\frac{1}{2}(Q_kp_k^Q)^TH_kQ_kp_k^Q-\frac{1}{3}\sqrt{h_k}\|Q_kp_k^Q\|^3\\
  &=& -(g_k^Q)^Tp_k^Q-\frac{1}{2}(p_k^Q)^TH_k^Qp_k^Q-\frac{1}{3}\sqrt{h_k}\|p_k^Q\|^3.
\end{eqnarray*}
Multiplying \eqref{TKM1} by $p_k^Q$ we have that
\begin{eqnarray*}
-(g_k^Q)^Tp_k^Q=(p_k^Q)^TH^Q_kp_k^Q+(p_k^Q)^T\sqrt{h_k}\|p^Q_k\|p^Q_k &.
\end{eqnarray*}
Then
\begin{eqnarray*}
  \Delta m^B_k &=& \frac{1}{2}(p_k^Q)^TH_k^Qp_k^Q+\frac{2}{3}\sqrt{h_k}\|p_k^Q\|^3\\
  &=& \frac{1}{2}(p_k^Q)^T(H_k^Q+\sqrt{h_k}\|p_k^Q\|)p_k^Q+\frac{1}{6}\sqrt{h_k}\|p_k^Q\|^3\\
   &\geq& \frac{1}{6}\sqrt{h_k}\|p_k^Q\|^3.
\end{eqnarray*}
The last inequality comes from \eqref{TKM2}. Thus the lemma is proved.
\qed
\end{proof}

The following lemma gives a useful bound for the tangential step $p_k$.
\begin{lemma}\label{lem3.3}
Assume that the hard case does not occur at $x_k$, Assumption $(G3)$ holds, and $h_k > 0$. Then the tangential step satisfies
\begin{eqnarray}\label{lema3}
  \|p_k\|\leq \frac{3}{\sqrt{h_k}}{\rm max}\left\{\kappa_{H}, \sqrt{\|g_k^Q\|(h_k)^\frac{1}{2}}\right\}
\end{eqnarray}
for all $k\geq0$.
\end{lemma}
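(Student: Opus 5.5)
The plan is to follow the classical Cartis--Gould--Toint argument for bounding the step of a cubic-regularized model. The starting point is the first-order condition \eqref{TKM1}, which is available because the hard case is excluded, so $t_k\neq 0$ and $p_k^Q=v_k/t_k$ is well defined. Since $Q_k$ has orthonormal columns, $\|p_k\|=\|p_k^Q\|$, so it suffices to bound $r:=\|p_k^Q\|$. We may assume $r>0$, since otherwise there is nothing to prove.

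\textbf{Step 1 (scalar inequality).} Take the inner product of \eqref{TKM1} with $p_k^Q$ to get
\[
\sqrt{h_k}\,r^3 = -(g_k^Q)^Tp_k^Q-(p_k^Q)^TH_k^Qp_k^Q \le \|g_k^Q\|\,r+\|H_k^Q\|\,r^2 .
\]
By (G3) and $\|Q_k\|=1$ we have $\|H_k^Q\|\le\|H_k\|\le M_{\rm H}$. Here I will need to read the constant $\kappa_H$ in \eqref{lema3} as an upper bound for $\|H_k^Q\|$, i.e.\ $\kappa_H\ge M_{\rm H}$. If the authors intend $\kappa_H$ to be the algorithmic tolerance, one either enlarges it or replaces it by $\max\{\kappa_H,M_{\rm H}\}$. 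Making this identification precise is the main point requiring care, because the algebra itself is routine.

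\textbf{Step 2 (quadratic bound).} Divide the inequality by $r>0$ and use $\|H_k^Q\|\le\kappa_H$. This gives $\sqrt{h_k}\,r^2-\kappa_H r-\|g_k^Q\|\le0$. Since $h_k>0$, $r$ is at most the positive root:
\[
r\le \frac{\kappa_H+\sqrt{\kappa_H^2+4\sqrt{h_k}\,\|g_k^Q\|}}{2\sqrt{h_k}} .
\]

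\textbf{Step 3 (case split).} Set $M:=\max\{\kappa_H,\sqrt{\|g_k^Q\|\sqrt{h_k}}\}$. Then $\kappa_H\le M$ and $4\sqrt{h_k}\|g_k^Q\|\le 4M^2$. The numerator above is therefore at most $M+\sqrt{5}\,M<4M$, which yields $r\le 2M/\sqrt{h_k}\le 3M/\sqrt{h_k}$. This is \eqref{lema3}, with some room to spare.
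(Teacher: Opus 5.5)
Your proof is correct. Your reading of $\kappa_H$ is the right one: the paper's own proof also ends with $M_{\rm H}$ (through $\|H_k\|\le M_{\rm H}$), not with the algorithmic tolerance. The replacement is genuinely needed, not a convenience. By \eqref{TKM2}, $\|p_k^Q\|\ge-\lambda_{\min}(H_k^Q)/\sqrt{h_k}$, so the bound with the tolerance $\kappa_H$ fails whenever $-\lambda_{\min}(H_k^Q)>3\max\{\kappa_H,\sqrt{\|g_k^Q\|\sqrt{h_k}}\}$.

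Your route differs from the paper's. The paper argues by contradiction on model values. It lower-bounds $m^B(x_k,H_k,p_k)-m^B(x_k,H_k,0)$ by $-\|g_k^Q\|\|p_k\|-\frac12\|H_k\|\|p_k\|^2+\frac13\sqrt{h_k}\|p_k\|^3$. It then splits the cubic coefficient as $\frac19+\frac29$, so each negative term is dominated separately once $\|p_k\|$ exceeds $3\sqrt{\|g_k^Q\|/\sqrt{h_k}}$ or $\frac94\|H_k\|/\sqrt{h_k}$ respectively. The resulting strict model increase contradicts $\Delta m^B_k\ge0$ from Lemma~\ref{lem3.2}.

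You instead test the stationarity equation \eqref{TKM1} against $p_k^Q$ and solve the quadratic inequality $\sqrt{h_k}\,r^2-\kappa_H r-\|g_k^Q\|\le0$ explicitly. Your argument is more direct and needs neither \eqref{TKM2} nor Lemma~\ref{lem3.2}. It also gives the sharper constant $(1+\sqrt5)/2$ in place of $3$.

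The paper's argument has a different advantage. Its only input is the sign condition $\Delta m^B_k\ge0$, which is supplied here by Lemma~\ref{lem3.2}. So it applies verbatim to any step that does not increase the tangential model, such as an inexact subproblem solution. Yours depends on the exact identity \eqref{TKM1}.
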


\begin{proof}
According to $g^Q_k=Q^T_k(g_k+H_kn_k)$ and the fact that the columns of $Q_k$ form an orthonormal basis of the null space of $A_k$, we have that
\begin{eqnarray*}
  &&m^B(x_k,H_k,p_k)-m^B(x_k,H_k,0)\\
  &=&(g_k+H_kn_k)^Tp_k+\frac{1}{2}p_k^TH_kp_k+\frac{1}{3}\sqrt{h_k}\|p_k\|^3\\
  &\geq& -\|g_k^Q\|\|p_k\|-\frac{1}{2}\|H_k\|\|p_k\|^2+\frac{1}{3}\sqrt{h_k}\|p_k\|^3 \\
  &=&(\frac{1}{9}\sqrt{h_k}\|p_k\|^3-\|g_k^Q\|\|p_k\|)+(\frac{2}{9}\sqrt{h_k}\|p_k\|^3-\frac{1}{2}\|H_k\|\|p_k\|^2).
\end{eqnarray*}
Then if $\|p_k\|>3\sqrt{\frac{\|g_k^Q\|}{\sqrt{h_k}}}$, we have that
 \begin{eqnarray*}
 \frac{\sqrt{h_k}}{9}\|p_k\|^3-{\|g_k^Q\|}\|p_k\|>0,
 \end{eqnarray*}
 and if $\|p_k\|>\frac{9}{4}\frac{\|H_k\|}{\sqrt{h_k}}$, we have that
 \begin{eqnarray*}
 \frac{2}{9}\sqrt{h_k}\|p_k\|^3-\frac{1}{2}\|H_k\|\|p_k\|^2>0.
 \end{eqnarray*}
    So whenever $\|p_k\|> \frac{3}{\sqrt{h_k}}{\rm max}\{\|H_k\|, \sqrt{\|g_k^Q\|(h_k)^\frac{1}{2}}\}$, we have that
    \begin{eqnarray*}
    m^B(x_k,H_k,p_k)>m^B(x_k,H_k,0),
    \end{eqnarray*}
but this is contradictory with \eqref{lema2}. Therefore, we have from Assumption $\mathrm{(G3)}$ that
\begin{eqnarray*}
 \|p_k\|&\leq& \frac{3}{\sqrt{h_k}}{\rm max}\left\{\|H_k\|, \sqrt{\|g_k^Q\|(h_k)^\frac{1}{2}}\right\}\\
 &\leq& \frac{3}{\sqrt{h_k}}{\rm max}\left\{M_{\rm H}, \sqrt{\|g_k^Q\|(h_k)^\frac{1}{2}}\right\}.
\end{eqnarray*}
The conclusion of the lemma holds.
\qed
\end{proof}

From the result of Lemma \ref{lem3.3}, we further derive a bound on the overall step $d_k$.
\begin{lemma}\label{lem3.4}
Assume that Assumptions $(G1)-(G4)$ hold. Then there exists a constant $M_{\rm d}>0$, independent of $\epsilon$, such that the trial step satisfies
\begin{eqnarray}\label{d39}
 \|d_k\|\leq  M_{\rm d}\frac{1}{(h_k)^\frac{1}{4}}
\end{eqnarray}
for all $k$ with $h_k\ge1$. Moreover, the tangential step satisfies
\begin{eqnarray}\label{hk40}
 \|p_k^Q\|\leq 3\,{\rm max}\left\{M_{\rm H},\sqrt{M_{\rm gQ}}\right\}\frac{1}{(h_k)^\frac{1}{4}}
\end{eqnarray}
for all such $k$, where $M_{\rm gQ}$ is a uniform upper bound on $\|g_k^Q\|$.
\end{lemma}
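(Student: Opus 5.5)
We bound the normal and tangential parts of $d_k=n_k+p_k$ separately and combine them with the triangle inequality. The restriction $h_k\ge1$ is what allows all powers of $h_k$ to be traded for $(h_k)^{-1/4}$.

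\emph{Normal step.} By \eqref{nk} and the upper end of the interval \eqref{aa1h}, $\alpha_k\le \min\{1,1/(\|n_k^c\|(h_k)^{1/4})\}$. Hence
\begin{equation*}
\|n_k\|=\alpha_k\|n_k^c\|\le \frac{1}{(h_k)^{1/4}},
\end{equation*}
exactly as in \eqref{nkdejie}. This bound holds for every $k$ with no case split; when $c(x_k)=0$ we simply have $n_k=0$.

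\emph{Uniform bound on $g_k^Q$.} We first produce the constant $M_{\rm gQ}$. Since $Q_k$ has orthonormal columns,
\begin{equation*}
\|g_k^Q\|\le \|g_k\|+\|H_k\|\,\|n_k\|.
\end{equation*}
By (G2), $\|g_k\|\le \max\{M_g,M_c\}$, and by (G3), $\|H_k\|\le M_{\rm H}$. For $\|n_k\|$ we use either $\|n_k\|\le (h_k)^{-1/4}\le1$ (valid because $h_k\ge1$) or $\|n_k\|\le\kappa_{\rm n}\|c_k\|\le \kappa_{\rm n}M_c$ from \eqref{star} and (G4). Taking $M_{\rm gQ}:=\max\{M_g,M_c\}+M_{\rm H}$ gives a uniform bound independent of $\epsilon$.

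\emph{Tangential step.} We start from Lemma \ref{lem3.3}, in the form actually proved there with $\|H_k\|\le M_{\rm H}$:
\begin{equation*}
\|p_k\|\le \max\Bigl\{\frac{3M_{\rm H}}{\sqrt{h_k}},\; 3\sqrt{\|g_k^Q\|}\,(h_k)^{-1/4}\Bigr\}.
\end{equation*}
Because $h_k\ge1$, we have $(h_k)^{-1/2}\le (h_k)^{-1/4}$, so the first entry is at most $3M_{\rm H}(h_k)^{-1/4}$. The second entry is at most $3\sqrt{M_{\rm gQ}}(h_k)^{-1/4}$. Since $\|p_k^Q\|=\|Q_kp_k^Q\|=\|p_k\|$, this yields \eqref{hk40}.

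\emph{Combining.} The triangle inequality then gives \eqref{d39} with
\begin{equation*}
M_{\rm d}:=1+3\max\{M_{\rm H},\sqrt{M_{\rm gQ}}\}.
\end{equation*}
All ingredients of this constant come from (G2)–(G4), so $M_{\rm d}$ is independent of $\epsilon$.

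\emph{Main obstacle.} The only delicate point is notational. Lemma \ref{lem3.3} is stated with $\kappa_H$ in the maximum, but its proof uses $\|H_k\|\le M_{\rm H}$. I would invoke the bound as proved, i.e. with $M_{\rm H}$, which matches \eqref{hk40}. I would also point out explicitly that $h_k\ge1$ is needed in both places it enters: to absorb $(h_k)^{-1/2}$ into $(h_k)^{-1/4}$, and to keep $\|n_k\|$ and hence $\|g_k^Q\|$ uniformly bounded.
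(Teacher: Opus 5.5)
Your proposal is correct and follows the paper's proof essentially step for step. You use the normal-step bound $\|n_k\|\le h_k^{-1/4}$ from \eqref{aa1h}, then Lemma~\ref{lem3.3} (with $M_{\rm H}$) together with $h_k\ge1$ for the tangential step, and the triangle inequality, arriving at the same constant $M_{\rm d}=1+3\max\{M_{\rm H},\sqrt{M_{\rm gQ}}\}$. One small improvement over the paper: you build $M_{\rm gQ}$ explicitly from a bound on $\|n_k\|$, whereas the paper credits it to (G2)--(G3) alone, even though such a bound is also needed.
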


\begin{proof}
By the triangle inequality and the decomposition $d_k=n_k+Q_kp_k^Q$, with $Q_k$ having orthonormal columns, we have $\|d_k\|\le\|n_k\|+\|p_k^Q\|$. From \eqref{nkdejie} we have $\|n_k\|\le h_k^{-1/4}$. For the tangential step, Lemma \ref{lem3.3} together with the boundedness $\|g_k^Q\|\le M_{\rm gQ}$ (which follows from Assumptions $(G2)$--$(G3)$ and \eqref{defHkQgkQ}) gives
\begin{eqnarray*}
 \|p_k^Q\|&\leq& \frac{3}{\sqrt{h_k}}{\rm max}\left\{M_{\rm H}, \sqrt{\|g_k^Q\|(h_k)^\frac{1}{2}}\right\}
 \leq \frac{3}{\sqrt{h_k}}{\rm max}\left\{M_{\rm H}, \sqrt{M_{\rm gQ}}\,h_k^{\frac{1}{4}}\right\}\\
 &=& 3\,{\rm max}\left\{M_{\rm H}\,h_k^{-\frac{1}{2}}, \sqrt{M_{\rm gQ}}\,h_k^{-\frac{1}{4}}\right\}
 \leq 3\,{\rm max}\left\{M_{\rm H}, \sqrt{M_{\rm gQ}}\right\} h_k^{-\frac{1}{4}},
\end{eqnarray*}
where the last inequality uses $h_k\ge1$ (so that $h_k^{-1/2}\le h_k^{-1/4}$). This proves \eqref{hk40}. Combining the two bounds yields
\begin{eqnarray*}
 \|d_k\|\leq \left(1+3\,{\rm max}\left\{M_{\rm H}, \sqrt{M_{\rm gQ}}\right\}\right) h_k^{-\frac{1}{4}}.
\end{eqnarray*}
Setting $M_{\rm d}:=1+3\,{\rm max}\{M_{\rm H}, \sqrt{M_{\rm gQ}}\}$ completes the proof.
\qed
\end{proof}

Next, we analyze the discrepancy between the merit function and its overall model at the new iterate $x_k+d_k$.
\begin{lemma}\label{lem3.5}
Assume that Assumptions $(G1)-(G4)$ hold. Then we have for sufficiently large $k$ that the step generated by the algorithm satisfies the bound
\begin{eqnarray}\label{k0}
w(x_k+d_k,\mu_k)-m(x_k,H_k,\mu_k,d_k) &\leq& (1+\mu_k)\kappa_{\rm 0}\|d_k\|^2 \label{pmerror} \\
  &\leq& \kappa_{\rm m}(1+\mu_k) \frac{1}{(h_k)^\frac{1}{2}}\label{fimerror}
\end{eqnarray}
for some constants $\kappa_{\rm 0}$ and $\kappa_{\rm m}$.
\end{lemma}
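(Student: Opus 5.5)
We split the difference between the merit function $w(x_k+d_k,\mu_k)$ and the model $m(x_k,H_k,\mu_k,d_k)$ into an objective part and a constraint part, and bound each by Taylor's theorem with the Lipschitz hypotheses of $(G1)$. By \eqref{muu} and \eqref{mu},
\begin{align*}
w(x_k+d_k,\mu_k)-m(x_k,H_k,\mu_k,d_k)
&=\Bigl[f(x_k+d_k)-f_k-g_k^Td_k-\tfrac12 d_k^TH_kd_k\Bigr]-\tfrac13\sqrt{h_k}\|d_k\|^3\\
&\quad+\mu_k\Bigl[\|c(x_k+d_k)\|-\|c_k+A_kd_k\|\Bigr].
\end{align*}
The cubic term $-\tfrac13\sqrt{h_k}\|d_k\|^3$ is nonpositive. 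We discard it, since we only need an upper bound.

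For the objective bracket, the mean value theorem and the Lipschitz continuity of $g$ in $(G1)$ (constant $L_g$) give $|f(x_k+d_k)-f_k-g_k^Td_k|\le \tfrac12 L_g\|d_k\|^2$. Assumption $(G3)$ gives $|\tfrac12 d_k^TH_kd_k|\le\tfrac12 M_{\rm H}\|d_k\|^2$. The bracket is therefore at most $\tfrac12(L_g+M_{\rm H})\|d_k\|^2$.

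For the constraint bracket, the reverse triangle inequality gives $\|c(x_k+d_k)\|-\|c_k+A_kd_k\|\le\|c(x_k+d_k)-c_k-A_kd_k\|$. Componentwise Taylor expansion with the Lipschitz continuity of $\nabla c_i$ in $(G1)$ (constant $L_c$) bounds this by $\tfrac12\sqrt m\,L_c\|d_k\|^2$. Setting $\kappa_0:=\tfrac12\max\{L_g+M_{\rm H},\sqrt m L_c\}$ yields \eqref{pmerror}.

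For \eqref{fimerror}, we apply Lemma \ref{lem3.4}, which gives $\|d_k\|^2\le M_{\rm d}^2 h_k^{-1/2}$, so that $\kappa_{\rm m}:=\kappa_0M_{\rm d}^2$.

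The main obstacle is that Lemma \ref{lem3.4} only applies when $h_k\ge1$. This is where the phrase ``for sufficiently large $k$'' is used.

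My first approach is to argue that $h_k$ is eventually at least $1$, or at least bounded below by the positive floor $h_{\min}$ imposed in the update of $I_h$. In the latter case I would redo the estimate of Lemma \ref{lem3.4} for $h_k\ge h_{\min}$. That estimate gives $\|d_k\|\le M_{\rm d}' h_k^{-1/4}$ with $M_{\rm d}'$ depending on $h_{\min}$; this works because $\|n_k\|\le h_k^{-1/4}$ holds directly from \eqref{aa1h}, and $h_k^{-1/2}\le h_{\min}^{-1/4}h_k^{-1/4}$.

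A second, lesser issue is that the points $x_k+d_k$ must lie in the open set $C$ where the Lipschitz constants are valid. I would handle this by taking $C$ to contain the segments $[x_k,x_k+d_k]$, which is justified because $\|d_k\|$ is uniformly bounded.
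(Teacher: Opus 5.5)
Your proposal is correct and follows the paper's proof essentially line for line. Both arguments split the difference into an objective remainder and a constraint remainder and drop the nonpositive cubic term. Both apply Taylor/Lipschitz bounds from (G1), with (G3) controlling the Hessian term, and then use $\|d_k\|^2\le M_{\rm d}^2h_k^{-1/2}$ from Lemma \ref{lem3.4} to get $\kappa_{\rm m}=\kappa_0M_{\rm d}^2$. The caveat you add is real: the paper applies \eqref{d39} without checking $h_k\ge1$, and some lower bound on $h_k$ is genuinely needed, since \eqref{TKM2} forces $\|p_k^Q\|\ge-\lambda_{\min}(H_k^Q)/\sqrt{h_k}$. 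Your repair via $h_k\ge h_{\min}$ with an $h_{\min}$-dependent $M_{\rm d}$ works for all $k$, so ``sufficiently large $k$'' plays no role. It does rest on $h_k\ge h_{\min}$, which the paper asserts in Lemma \ref{lems} but Algorithm \ref{alg:asoda} enforces (via the floor $\sqrt{h_{\min}}$) only after very successful iterations; your alternative of showing $h_k\ge1$ eventually has no support in the paper.
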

\begin{proof}
From Taylor expansion of $f(x_k+d_k)$ and the Lipschitz continuity of $g(x)$ we have that
\begin{eqnarray}\label{391}
  f(x_k+d_k)-f(x_k)-g^T_kd_k \leq \kappa_{\rm fx}\|d_k\|^2
\end{eqnarray}
for some Lipschitz constant $\kappa_{\rm fx}$. Similarly, from the Lipschitz continuity of $A(x)$ we have that
\begin{eqnarray}\label{40}
 \|c(x_k+d_k)\|-\|c(x_k)+A_kd_k\|\leq\|c(x_k+d_k)-A_kd_k-c(x_k)\|\leq \kappa_{\rm cx}\|d_k\|^2
\end{eqnarray}
for another Lipschitz constant $\kappa_{\rm cx}$.
Assumption (G3) gives that
\begin{eqnarray}\label{401}
 d_k^TH_kd_k\leq M_{\rm H}\|d_k\|^2.
\end{eqnarray}
Hence, from \eqref{391}, \eqref{40} and \eqref{401}, we have that
\begin{eqnarray*}
&&w(x_k+d_k,\mu_k)-m(x_k,H_k,\mu_k,d_k)\\
&=& f(x_k+d_k)-f(x_k)-g^T_kd_k-\frac{1}{2}d^T_kH_kd_k+\mu_k\|c(x_k+d_k)\|\\
&&-\mu_k\|c(x_k)+A_kd_k\|-\frac{1}{3}\sqrt{h_k}\|d_k\|^3\\
&\leq&\kappa_{\rm fx}\|d_k\|^2+\frac{1}{2}M_{\rm H}\|d_k\|^2+\mu_k\kappa_{\rm cx}\|d_k\|^2\\
&\leq& \kappa_0(1+\mu_k)\|d_k\|^2,
\end{eqnarray*}
where $\kappa_0={\rm max}\left\{\kappa_{\rm fx}+\frac{1}{2}M_{\rm H}, \kappa_{\rm cx}\right\}$.
From Assumptions (G1)-(G4), we use the bound \eqref{d39}, then
\begin{eqnarray*}
w(x_k+d_k,\mu_k)-m(x_k,H_k,\mu_k,d_k) &\leq& (1+\mu_k)\kappa_{\rm 0}M_{\rm d}^2\frac{1}{(h_k)^\frac{1}{2}}\\
&\leq& \kappa_{\rm m}(1+\mu_k) \frac{1}{(h_k)^\frac{1}{2}},\\
\end{eqnarray*}
where $\kappa_{\rm m}=\kappa_{\rm 0}M_{\rm d}^2$.

Therefore, we complete the proof of the lemma.
\qed
\end{proof}

Assumption (G2) implies that
\begin{eqnarray}\label{fmin}
 f(x_k)\geq f_{\rm min}
\end{eqnarray}
for all $k\geq0$ and some constant $f_{\rm min}$.

We define the auxiliary function
\begin{eqnarray*}
W(x,\mu):=\frac{w(x,\mu)-f_{\rm min}}{\mu}.
\end{eqnarray*}

The following lemma presents key results concerning the auxiliary function $W(x,\mu)$.
\begin{lemma}\label{lem3.6}
Assume that (G2) holds and that $\{k_r\}(r\geq0)$ is the index set of the successful iterations from the algorithm. Then we have that
\begin{eqnarray}\label{varphi1}
  W(x_{k_r},\mu_{k_r}) \geq 0
\end{eqnarray}
and for any $q>0$,
\begin{eqnarray}\label{varphi2}
  W(x_{k_{r+q}},\mu_{k_{r+q}})\leq W(x_{k_r},\mu_{k_r})-\eta_1\frac{\Delta m_{k_r}}{\mu_{k_r}}.
\end{eqnarray}
\end{lemma}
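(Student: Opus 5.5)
Both parts are direct consequences of the definition of $W$, the monotone update \eqref{miuk} of $\mu_k$, and the acceptance test $\rho_k\ge\eta_1$; this is the standard argument of \cite{VardiAvi}.

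\emph{Nonnegativity \eqref{varphi1}.} By \eqref{fmin}, $f(x_{k_r})\ge f_{\min}$. Also $\mu_{k_r}\|c(x_{k_r})\|\ge0$, and $\mu_{k_r}>0$ because $\mu_0>0$ and \eqref{miuk} never decreases $\mu$. Hence $w(x_{k_r},\mu_{k_r})-f_{\min}\ge0$, and dividing by $\mu_{k_r}>0$ gives $W(x_{k_r},\mu_{k_r})\ge0$.

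\emph{Monotonicity in $\mu$.} Write
\[
W(x,\mu)=\frac{f(x)-f_{\min}}{\mu}+\|c(x)\|.
\]
Since $f(x)-f_{\min}\ge0$ along the iterates, $W(x,\cdot)$ is nonincreasing in $\mu$ for every such $x$. By \eqref{miuk}, $\mu_{k+1}\ge\mu_k$ for all $k$. Therefore $W(x,\mu')\le W(x,\mu)$ whenever $\mu'\ge\mu$ and $x$ is an iterate.

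\emph{One-step decrease.} At the successful iteration $k_r$ we have $\rho_{k_r}\ge\eta_1$, so
\[
w(x_{k_r+1},\mu_{k_r})\le w(x_{k_r},\mu_{k_r})-\eta_1\Delta m_{k_r}.
\]
Dividing by $\mu_{k_r}$ gives
\[
W(x_{k_r+1},\mu_{k_r})\le W(x_{k_r},\mu_{k_r})-\eta_1\frac{\Delta m_{k_r}}{\mu_{k_r}}.
\]

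\emph{Chaining up to $k_{r+q}$.} The iterates between $k_r+1$ and $k_{r+q}$ are generated by unsuccessful steps, which leave $x$ unchanged, and by successful steps. Any later successful step $k_s$, with $r<s<r+q$, contributes a nonpositive change $-\eta_1\Delta m_{k_s}/\mu_{k_s}\le0$; this needs $\Delta m_{k_s}\ge0$, which follows from \eqref{deltam} and Lemma \ref{lem3.1}. Any increase of $\mu$ in between can only lower $W$, by the monotonicity above. Applying these two observations alternately from $(x_{k_r+1},\mu_{k_r})$ up to $(x_{k_{r+q}},\mu_{k_{r+q}})$ shows that $W$ never increases along the way. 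Combining this with the one-step decrease yields \eqref{varphi2}.

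The main obstacle is that the monotonicity step requires $f(x)-f_{\min}\ge0$ at every point where $W$ is evaluated, including intermediate pairs such as $(x_{k_r+1},\mu_{k_r})$. This is guaranteed by (G2), since $f_{\min}$ bounds $f$ along the whole sequence and $x_{k_r+1}$ is itself an iterate. I would also check that the $\mu$ used in $\rho_{k_r}$ is the updated $\mu_{k_r}$, which holds by the ordering in Algorithm \ref{alg:asoda}: \eqref{miuk} is applied before \eqref{rhok1}.
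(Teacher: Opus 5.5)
Your proof is correct and follows the paper's argument. The acceptance test $\rho_{k_r}\ge\eta_1$ gives the one-step decrease, the identity $W(x,\mu)-W(x,\mu')=(f(x)-f_{\min})(1/\mu-1/\mu')\ge0$ for $\mu\le\mu'$ absorbs the penalty updates, and your chaining over later successful iterations is what the paper does by induction on $q$, with you additionally making explicit the fact $\Delta m_{k_s}\ge0$ (from \eqref{deltam} and Lemma \ref{lem3.1}) that the paper uses tacitly when concluding $W(x_{k_{r+1}},\mu_{k_{r+1}})\le W(x_{k_r},\mu_{k_r})$.
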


\begin{proof}
Due to the fact that $\mu_k>0$ for all $k$ and $w(x_k,\mu_k)\geq f_{\rm min}$ from \eqref{fmin}, we have that
\begin{eqnarray*}
\frac {w(x_k,\mu_k)-f_{\rm min}}{\mu_k} \geq 0,
\end{eqnarray*}
thus \eqref{varphi1} holds.

Next, we prove the second conclusion. Because $k_r$ and $k_{r+1}$ are consecutive successful iterations, we have that
\begin{eqnarray*}
x_l=x_{k_r+1}
\end{eqnarray*}
for $k_r+1\leq l \leq k_{r+1}.$
Then from $\rho_{k_r}\geq \eta_1$ that
\begin{eqnarray*}
  w(x_{k_{r+1}},\mu_{k_r})-f_{\rm min}&=&w(x_{k_r+1},\mu_{k_r})-f_{\rm min}\leq w(x_{k_r},\mu_{k_r})-f_{\rm min}-\eta_1\Delta m_{k_r}.
\end{eqnarray*}
Thus, we get that
\begin{eqnarray*}
  W(x_{k_{r+1}},\mu_{k_r})\leq W(x_{k_{r}},\mu_{k_r})-\eta_1\frac{\Delta m_{k_r}}{\mu_{k_r}}.
\end{eqnarray*}
From the definition of $W$, the fact that $\mu_{k_r}\leq\mu_{k_{r+1}}$ and \eqref{fmin} we have that
\begin{eqnarray*}
 W(x_{k_{r+1}},\mu_{k_r})-W(x_{k_{r+1}},\mu_{k_{r+1}})=(f(x_{k_{r+1}})-f_{\rm min})
 \left(\frac{1}{\mu_{k_r}}-\frac{1}{\mu_{k_{r+1}}}\right)\geq0.
\end{eqnarray*}
Combining the above two inequalities gives that \eqref{varphi2} holds for $q=1$, which then implies that $W(x_{k_{r+1}},\mu_{k_{r+1}})\leq W(x_{k_r},\mu_{k_r})$.

We now assume that \eqref{varphi2} holds for $q-1$; then
\begin{eqnarray*}
 W(x_{k_{r+q-1}},\mu_{k_{r+q-1}})\leq W(x_{k_r},\mu_{k_r})-\eta_1\frac{\Delta m_{k_r}}{\mu_{k_r}}.
\end{eqnarray*}
We have that
\begin{eqnarray*}
 W(x_{k_{r+q}},\mu_{k_{r+q}})\leq W(x_{k_{r+q-1}},\mu_{k_{r+q-1}})\leq W(x_{k_r},\mu_{k_r})-\eta_1\frac{\Delta m_{k_r}}{\mu_{k_r}}.
\end{eqnarray*}
Therefore \eqref{varphi2} holds for any $q>0$, and we complete the proof of the lemma.
\qed
\end{proof}

\subsection{Feasibility}\label{subsec2}

The following theorem shows the feasibility of the algorithm. The index set of successful iterations of the algorithm is denoted by
\begin{eqnarray*}
 \mathcal{S}:=\{k\geq0:~k~{\rm ~is ~a ~successful ~or ~very ~successful ~iteration}\}.
\end{eqnarray*}

\begin{theorem}\label{thm3.1}
Assume that Assumptions $(G1)-(G4)$ hold. Then
\begin{eqnarray*}
 \lim_{k\rightarrow\infty}c(x_k)=0.
\end{eqnarray*}
\end{theorem}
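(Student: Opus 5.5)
We plan to argue by contradiction, combining the telescoping decrease of the auxiliary function $W$ from Lemma~\ref{lem3.6} with the lower bound on the normal-model decrease in Lemma~\ref{lem3.1}.

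\textbf{Step 1 (infinitely many successful iterations).} First we show that $\mathcal S$ is infinite. If only finitely many iterations were successful, then from some index on every iteration is unsuccessful. The update $I_h=[\gamma_2\sqrt{h_k},\gamma_3\sqrt{h_k}]$ with $\gamma_2>1$ then forces $h_k\to\infty$, while $x_k$ stays fixed. At that fixed point $\mu_k$ is updated only finitely often in a meaningful way, so we expect it to stay bounded; this is the delicate part, and we would control $\mu_k^c$ through Lemma~\ref{lem3.7}. Lemma~\ref{lem3.5} bounds the numerator error in $1-\rho_k$ by $\kappa_{\rm m}(1+\mu_k)h_k^{-1/2}$. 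The denominator satisfies $\Delta m_k\ge\nu\mu_k\Delta m_k^N$ by \eqref{deltam}, and by Lemma~\ref{lem3.1} this is at least $\nu\mu_k\min\{\|c_k\|,\beta\kappa_{\rm n}^{-1}h_k^{-1/4}\}$. If $c(x_k)\neq0$, the ratio of error to decrease is $O(h_k^{-1/4})\to0$, so $\rho_k\to1$, which contradicts unsuccessfulness. If $c(x_k)=0$ at the fixed point, the limit $c\to0$ holds trivially along the tail anyway.

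\textbf{Step 2 (summability).} Along $\mathcal S=\{k_r\}$, Lemma~\ref{lem3.6} gives $0\le W(x_{k_r},\mu_{k_r})\le W(x_{k_0},\mu_{k_0})-\eta_1\sum_{s<r}\Delta m_{k_s}/\mu_{k_s}$. Using \eqref{deltam}, this yields $\sum_{r}\nu\eta_1\Delta m^N_{k_r}<\infty$, and hence $\Delta m^N_{k_r}\to0$. By Lemma~\ref{lem3.1}, $\min\{\|c(x_{k_r})\|,\beta/(\kappa_{\rm n}h_{k_r}^{1/4})\}\to0$.

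\textbf{Step 3 (ruling out $h_{k_r}\to\infty$ with $\|c\|$ bounded away from zero).} This is the main obstacle. Suppose $\|c(x_{k_r})\|\ge\bar\epsilon>0$ along a subsequence, so that $h_{k_r}\to\infty$ there. We would show that large $h$ makes iterations very successful, as in Step 1. Since $\rho_k\ge1-O(h_k^{-1/4})$ whenever $\|c_k\|\ge\bar\epsilon$, there is a threshold $\bar h$ above which every iteration near such points is very successful. Very successful iterations set $\sqrt{h_{k+1}}\le\sqrt{h_k}$ (through $I_h$ with upper end $\sqrt{h_k}$). Unsuccessful iterations can raise $h$ only by a factor $\gamma_3^2$ beyond $\bar h$. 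Hence $h_k\le\gamma_3^2\max\{\bar h,h_0\}$ whenever $\|c_k\|\ge\bar\epsilon$, up to the sandwich of the bisection tolerance $\sigma_b$. This contradicts $h_{k_r}\to\infty$, so $\|c(x_{k_r})\|\to0$.

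\textbf{Step 4 (full sequence).} Finally we pass from $\mathcal S$ to all $k$: unsuccessful iterations leave $x_k$ unchanged, so $x_k=x_{k_r+1}$ between consecutive successful indices. We still need $\|c(x_{k_r+1})\|\to0$. By the Lipschitz bound \eqref{40}, $\|c(x_{k_r}+d_{k_r})\|\le\|c_{k_r}+A_{k_r}d_{k_r}\|+\kappa_{\rm cx}\|d_{k_r}\|^2$, and the middle term equals $(1-\alpha_{k_r})\|c_{k_r}\|\to0$. It therefore remains to show $\|d_{k_r}\|\to0$. We expect to obtain this from $\Delta m_{k_r}/\mu_{k_r}\to0$ together with Lemmas~\ref{lem3.2} and~\ref{lem3.7}, provided $\mu_k$ is bounded. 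If $\mu_k$ is unbounded, we would instead argue via the standard penalty-growth contradiction used in \cite{VardiAvi}.
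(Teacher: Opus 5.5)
\textbf{The gap is in Step 3.} Your conclusion $h_k\le\gamma_3^2\max\{\bar h,h_0\}$ for every $k$ with $\|c(x_k)\|\ge\bar\epsilon$ does not follow from the mechanism you invoke. The chain ``$h_k\ge\bar h\Rightarrow$ very successful $\Rightarrow h$ is not increased'' rests on the normal-step decrease $\min\{\|c(x_k)\|,\beta/(\kappa_{\rm n}h_k^{1/4})\}$ of Lemma~\ref{lem3.1}. It is therefore available only while $\|c(x_k)\|$ stays bounded below. When the iterates move into $\{\|c\|<\bar\epsilon\}$, that decrease degenerates. Nothing established before the theorem then stops $h_k$ from being driven arbitrarily high there, since no uniform lower bound on the tangential decrease exists ($\|g_k^Q\|$ may be tiny). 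This can happen through unsuccessful iterations, or through merely successful ones, which also allow $\sqrt{h_{k+1}}\le\gamma_2\sqrt{h_k}$. A later return to $\{\|c\|\ge\bar\epsilon\}$ may then start from an arbitrarily large $h$. On each such visit you only get $h_k\le\max\{h_{\rm entry},\gamma_3^2\bar h\}$, which is not uniform over visits. So $h_{k_r}\to\infty$ along your subsequence is not contradicted. This is also why the paper's bound on $h$ inside the ball $\mathcal{O}_l$ is allowed to depend on $h_l$.

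\textbf{The missing idea is to exploit large $h$ rather than exclude it.} By Lemma~\ref{lem3.4}, $\|d_k\|\le M_{\rm d}h_k^{-1/4}$. A successful step from a point with $\|c\|\ge\frac12\|c(x_l)\|$ and large $h_k$ lowers $W$ by at least $\eta_1\nu\beta/(\kappa_{\rm n}h_k^{1/4})$, i.e.\ by an amount proportional to the step length. Since $\|c\|$ is Lipschitz, leaving the ball of radius $\|c(x_l)\|/(2L_1)$ about $x_l$ therefore costs a $W$-decrease of at least $\kappa_{\rm c0}\|c(x_l)\|$, however large $h$ is. Staying in the ball forever is impossible: there $h$ is bounded by a constant depending on $h_l$, so each successful step lowers $W$ by a fixed amount. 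Because $W$ is nonincreasing and bounded below (Lemma~\ref{lem3.6}) and $l$ is arbitrary, this gives $\|c(x_l)\|\to0$; this is the paper's argument.

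\textbf{Repair within your framework.} The same idea works with your summability. The total length of successful steps taken from points with $\|c\|\ge\bar\epsilon/2$ is finite: at most $(M_{\rm d}\kappa_{\rm n}/\beta)\sum_r\Delta m^N_{k_r}$, plus finitely many bounded steps. Hence only finitely many passages from $\{\|c\|\ge\bar\epsilon\}$ to $\{\|c\|<\bar\epsilon/2\}$ can occur. After the last one, the iterates stay where your $h$-bound holds with a single finite entry value, which contradicts $\sum_r\Delta m^N_{k_r}<\infty$.

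\textbf{Smaller points.} Your Step 4 is unnecessary. Unsuccessful iterations do not move, so $x_{k_{r+1}}=x_{k_r+1}$ and every iterate coincides with one at a successful index. Convergence along $\mathcal S$ therefore already gives the full limit, with no need for $\|d_{k_r}\|\to0$ or a bound on $\mu_k$. Likewise, Step 1 needs no bound on $\mu_k$, since $(1+\mu_k)/\mu_k\le1+1/\mu_0$.
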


We note that the inequality \eqref{deltam}, i.e.\ $\Delta m_k\ge\nu\mu_k\Delta m_k^N$, is guaranteed constructively by the algorithm itself: by \eqref{muc} and \eqref{miuk} the penalty parameter is set to $\mu_k\ge\mu_k^c$, where $\mu_k^c$ is the smallest value making \eqref{deltam} hold. Consequently \eqref{deltam} is valid at every iteration and does not rely on Lemma \ref{lemq1}, whose finite-bound conclusion is needed only later, in the iteration-complexity analysis.

\begin{proof}
The result is obvious if $c(x_l)=0$ for all sufficiently large $l$; we consider an arbitrary infeasible iteration $x_l$ such that $\|c(x_l)\|>0$.
First, we show that the algorithm cannot stop at such a point.
The Lipschitz continuity of $\|c(x)\|$ from Assumption (G2) gives that
\begin{eqnarray}\label{L1}
 | \|c(x)\|-\|c(x_l)\| | \leq L_1\|x-x_l\|
\end{eqnarray}
for some $L_1>0$ and all $x$. Thus \eqref{L1} certainly holds for all $x$ in an open ball
\begin{eqnarray}\label{Ol}
 \mathcal{O}_l :=\left\{x~|~\|x-x_l\|<\frac{\|c(x_l)\|}{2L_1}\right\}.
\end{eqnarray}
We can immediately get from \eqref{L1} and \eqref{Ol} that
\begin{eqnarray*}
| \|c(x)\|-\|c(x_l)\| |<\frac{1}{2}\|c(x_l)\|.
\end{eqnarray*}
Thus, we have that
\begin{eqnarray}\label{cl}
 \|c(x)\|>\frac{1}{2}\|c(x_l)\|,
\end{eqnarray}
which ensures that $c(x)$ is bounded away from zero for any $x\in\mathcal{O}_l$. From Assumption $(G4)$ we combine \eqref{deltam}, \eqref{lem1} and \eqref{cl}, then
\begin{eqnarray}\label{cll}
\begin{aligned}
 \Delta m_k \geq \nu\mu_k\Delta m^N_k
&\geq \nu\mu_k{\rm min}\left\{\|c(x_k)\|, \frac{\beta}{\kappa_{\rm n}(h_k)^\frac{1}{4}}\right\} \\
&\geq \nu\mu_k{\rm min}\left\{\frac{1}{2}\|c(x_l)\|, \frac{\beta}{\kappa_{\rm n}(h_k)^\frac{1}{4}}\right\}
 \end{aligned}
\end{eqnarray}
for any $x_k\in\mathcal{O}_l$.

Let
\begin{eqnarray}\label{sigmal}
 \sqrt{h_l^0}:={\rm max}\left\{\left(\frac{\kappa_{\rm n}\|c(x_l)\|}{2\beta}\right)^{-2}, \left(\frac{(1-\eta_2)\nu\beta\mu_{0}}{\kappa_{\rm m}\kappa_{\rm n}(1+\mu_{0})}\right)^{-2}\right\}.
\end{eqnarray}
If $\sqrt{h_k}\geq\sqrt{h_l^0}$, from \eqref{cl} and \eqref{cll} we get that
\begin{eqnarray*}
 \Delta m_k\geq\nu\mu_k\frac{\beta}{\kappa_{\rm n}(h_k)^\frac{1}{4}}.
\end{eqnarray*}
From Assumptions (G1)-(G4) we have \eqref{fimerror}, 
\begin{eqnarray*}
w(x_k+d_k,\mu_k)-m(x_k,H_k,\mu_k,d_k)\leq \kappa_{\rm m}(1+\mu_k) \frac{1}{(h_k)^\frac{1}{2}},
\end{eqnarray*}
We have that
\begin{eqnarray*}
1-\rho_k&=&\frac{w(x_k+d_k,\mu_k)-m(x_k,H_k,\mu_k,d_k)}{\Delta m_k} \nonumber \\
&\leq&\frac{\kappa_{\rm m}\kappa_{\rm n}(1+\mu_k)}{\nu\beta\mu_k (h_k)^\frac{1}{4}}\leq1-\eta_2.
\end{eqnarray*}
Thus, so long as $\sqrt{h_k}\geq\sqrt{h_l^0}$, a very successful iteration will be obtained for any $x_k\in\mathcal{O}_l$.

In the following we show that there must be a first successful iterate $x_{k_q},k_q>k_0 :=l$, not in $\mathcal{O}_l$.
Assume that all successful iterates $x_{k_r}\in\mathcal{O}_l$. Then
\begin{eqnarray}\label{deltal}
\sqrt{h_{k_r}}\leq\sqrt{h^0} :=\gamma_2{\rm max}\left\{\sqrt{h_l},\sqrt{h_l^0}\right\}.
\end{eqnarray}
Since iteration $k_r\in\mathcal{S}$ is successful, from \eqref{varphi2}, \eqref{cll} and \eqref{deltal} we can get that
\begin{eqnarray*}
  W(x_{k_{r+1}},\mu_{k_{r+1}})&\leq&W(x_{k_r},\mu_{k_r})-\eta_1\frac{\Delta m_{k_r}}{\mu_{k_r}}\\
  &\leq&W(x_{k_r},\mu_{k_r})-\eta_1\nu{\rm min}\left\{\frac{1}{2}\|c(x_l)\|, \frac{\beta}{\kappa_{\rm n}(h^0)^\frac{1}{4}}\right\}.
\end{eqnarray*}
By summing the first $q$ of these successful iterations, we have that
\begin{eqnarray*}
  W(x_{k_{r+q}},\mu_{k_{r+q}})&\leq& W(x_{k_r},\mu_{k_r})-\eta_1\frac{\Delta m_{k_r}}{\mu_{k_r}}-\cdot\cdot\cdot-\eta_1\frac{\Delta m_{k_{r+q-1}}}{\mu_{k_{r+q-1}}}\\
  &\leq& W(x_{k_r},\mu_{k_r})-q\eta_1\nu{\rm min}\left\{\frac{1}{2}\|c(x_l)\|, \frac{\beta}{\kappa_{\rm n}(h^0)^\frac{1}{4}}\right\}.
\end{eqnarray*}
It can be seen that $W(x_{k_{r+q}},\mu_{k_{r+q}})$ must be negative when $q$ is sufficiently large. This contradicts \eqref{varphi1}. Hence our assumption that all successful iterates $x_{k_r}$ ($k_r\in\mathcal{S}$, $k_r\geq l$) remain in $\mathcal{O}_l$ must be incorrect, and thus there must be a first iterate $x_{k_q}$, $q>0$, not in $\mathcal{O}_l$.
We now consider the iterates between $x_{k_q}$ and $x_{k_0}=x_l$. \eqref{varphi2} and \eqref{cll} imply that for any $r < m \leq q,$
\begin{eqnarray}\label{varphik}
  W(x_{k_{m}},\mu_{k_{m}})\leq W(x_{k_r},\mu_{k_r})-\eta_1\nu{\rm min}\left\{\frac{1}{2}\|c(x_l)\|, \frac{\beta}{\kappa_{\rm n}(h_{k_r})^\frac{1}{4}}\right\}.
\end{eqnarray}
Since $x_{k_r}\in\mathcal{O}_l$. If
\begin{eqnarray*}
  \sqrt{h_{k_r}}\leq\left(\frac{\kappa_{\rm n}\|c(x_l)\|}{2\beta} \right)^{-2},
\end{eqnarray*}
From \eqref{varphik} with $m=q$, it follows that
\begin{eqnarray}\label{varphikj1}
  W(x_{k_q},\mu_{k_q})\leq W(x_{k_r},\mu_{k_r})-\frac{1}{2}\eta_1\nu\|c(x_l)\|
  \leq W(x_l,\mu_l)-\frac{1}{2}\eta_1\nu\|c(x_l)\|.
\end{eqnarray}
On the other hand if
\begin{eqnarray*}
  \sqrt{h_{k_r}}>\left(\frac{\kappa_{\rm n}\|c(x_l)\|}{2\beta} \right)^{-2},
\end{eqnarray*}
we have that
\begin{eqnarray}\label{varphiki}
 W(x_{k_{r+1}},\mu_{k_{r+1}})\leq W(x_{k_r},\mu_{k_r})-
 \frac{\eta_1\nu\beta}{\kappa_{\rm n}(h_{k_r})^\frac{1}{4}}
\end{eqnarray}
for each $0\leq r\leq q$. Then by summing \eqref{varphiki}, we get that
\begin{eqnarray}\label{varphi11}
  W(x_{k_q},\mu_{k_q})\leq W(x_l,\mu_l)-\frac{\eta_1\nu\beta}{\kappa_{\rm n}}
  \sum_{r=0}^{q-1}\frac{1}{(h_{k_r})^\frac{1}{4}}.
\end{eqnarray}
But as $x_{k_q}\notin\mathcal{O}_l$, \eqref{d39} and \eqref{Ol} imply that
\begin{eqnarray}\label{cxl}
 \frac{\|c(x_l)\|}{2L_1}\leq\|x_{k_q}-x_l\|\leq \sum_{i=0}^{q-1}\|x_{k_{r+1}}-x_{k_r}\| \leq 2M_{\rm d}
 \sum_{r=0}^{q-1}\frac{1}{(h_{k_r})^\frac{1}{4}},
\end{eqnarray}
and then \eqref{varphi11} gives
\begin{eqnarray}\label{varphikj2}
  W(x_{k_q},\mu_{k_q})\leq W(x_l,\mu_l)-\frac{\eta_1\nu\beta\|c(x_l)\|}{4M_{\rm d}L_1\kappa_{\rm n}}.
\end{eqnarray}
Thus the two cases \eqref{varphikj1} and \eqref{varphikj2} together ensure that
\begin{eqnarray}\label{Them1}
 W(x_{k_q},\mu_{k_q})\leq W(x_l,\mu_l)-\kappa_{\rm c0}\|c(x_l)\|,
\end{eqnarray}
where $\kappa_{\rm c0}
 =\frac{1}{2}\eta_1\nu{\rm min}\left\{1,\frac{\beta}{2M_{\rm d}L_1\kappa_{\rm n}} \right\}$. \eqref{Them1} proves that $\|c(x_l)\|$ cannot be bounded away from zero since $l$ is arbitrary and  $\{W(x_{k_q},\mu_{k_q})\}$ is decreasing and bounded from below from Lemma \ref{lem3.6}.
\qed
\end{proof}

Theorem \ref{thm3.1} indicates that any limit point of the sequence ${x_k}$ is a feasible solution to problem \eqref{NLP}.

\subsection{Global iteration complexity}\label{subsec3} 

The feasibility of the algorithm has been analyzed in Theorem~\ref{thm3.1}. In this Subsection, our goal is to find an $\epsilon$-approximate second-order stationary point. The motivation is rooted in well-documented findings \cite{Cartis1,Cartis2,Cartis5} and the monograph \cite{NesterovY1}. First, we provide the definition of $\epsilon$-approximate second-order stationary point and the second-order Lipschitz continuity.
\\
\begin{definition}\label{ss} A point $x$ is called an $\epsilon$-approximate second-order stationary point of \eqref{NLP} if there exist constants $\kappa_c, \kappa_g, \kappa_H > 0$ independent of $\epsilon$ such that
    \begin{subequations}
        \begin{align}
            \label{eq0}  \| c(x) \| &\leq \kappa_c \epsilon,  \\
            \label{eq1}  \| g^Q(x) \| &\leq \kappa_g \epsilon,                                       \\
            \label{eq2} \lambda_{\min} (H^Q(x)) &\geq -\kappa_H \sqrt{\epsilon},
        \end{align}
    \end{subequations}
    where $g^Q(x):=Q(x)^T\bigl(g(x)+H(x)\,n(x)\bigr)$ and $H^Q(x):=Q(x)^T H(x)\,Q(x)$, with $n(x)$ the normal step at $x$ and $Q(x)$ an orthonormal basis of $\operatorname{null}\!\bigl(A(x)\bigr)$.
\end{definition}

\begin{remark}
The residual $\|g^Q(x)\|$ in \eqref{eq1} is the reduced gradient of the tangential subproblem \emph{after} the normal step, i.e.\ $g^Q(x)=Q(x)^T(g(x)+H(x)n(x))$. It differs from the standard reduced gradient $Q(x)^T g(x)$ by the term $Q(x)^T H(x)n(x)$, whose norm is bounded by $M_{\rm H}\,\|n(x)\|\le M_{\rm H}\,\kappa_{\rm n}\,\|c(x)\|$ in view of Assumptions $(G3)$--$(G4)$. Consequently, when the feasibility residual in \eqref{eq0} is small, the two residuals agree to first order, and \eqref{eq0}--\eqref{eq2} reduce to the standard primal-dual second-order optimality conditions (feasibility, reduced gradient, and reduced-Hessian semidefiniteness) of problem \eqref{NLP}. We retain the explicit feasibility condition \eqref{eq0} precisely because $g^Q(x)$ is a subproblem residual rather than the KKT residual itself.
\end{remark}

\begin{definition}\label{assm}
    We say that a function $f$ has an \(M\)-Lipschitz continuous Hessian if for all \( x, y \in \mathbb{R}^n\),
    \begin{equation}\label{eq.assm.lipschitz}
        \|\nabla^2 f(x) - \nabla^2 f(y)\| \le M \|x-y\|.
    \end{equation}
\end{definition}

Now we begin our global iteration complexity analysis for the algorithm; we ignore the hard case and the bisection method for the moment. The discussion on the hard case is given in the next Subsection. Since the analysis of a proper bisection method is quite lengthy, a complete description is omitted in the main text, the interested readers may refer to \cite{he2025HSODF}.
\\
\begin{lemma}\label{lemq1}
     Assume that Assumptions $(G2)-(G3)$ hold and that \eqref{kbon1} holds for all $k$ sufficiently large. Then there exist constants $\mu_{\rm max}>0$ and $\kappa_{\rm mk}\geq0$ and an index $k_1$ such that
\begin{eqnarray}\label{mkt1}
 \Delta m_k\geq\Delta m^B_k+(\mu_k-\kappa_{\rm mk})\Delta m^N_k
\end{eqnarray}
and
\begin{eqnarray}\label{mumax}
 \mu_k=\mu_{\rm max}
\end{eqnarray}
for all $k\geq k_1$.
\end{lemma}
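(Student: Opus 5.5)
Starting from the decomposition of $\Delta m_k$ displayed after \eqref{qqk},
\[
\Delta m_k=\Delta m^B_k+\mu_k\Delta m^N_k+\Delta m^F_k+\tfrac{\sqrt{h_k}}{3}\bigl(\|n_k\|^3+\|p_k\|^3-\|d_k\|^3\bigr),
\]
the plan is to show that the last two terms are bounded below by $-\kappa_{\rm mk}\Delta m^N_k$ for a constant $\kappa_{\rm mk}$ independent of $k$. This gives \eqref{mkt1} directly. The penalty bound \eqref{mumax} then follows from \eqref{mkt1} by the usual Vardi-type argument on the update rule \eqref{miuk}.

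\textbf{Normal-model term.} By definition,
\[
\Delta m^F_k=-g_k^Tn_k-\tfrac12 n_k^TH_kn_k-\tfrac13\sqrt{h_k}\|n_k\|^3 .
\]
Using (G2) and (G3), the first two terms are at least $-(M_g+\tfrac12 M_{\rm H}\|n_k\|)\|n_k\|$. Since $\|n_k\|\le\kappa_{\rm n}\|c(x_k)\|\le\kappa_{\rm n}M_c$ by \eqref{star}, this is at least $-\kappa_1\|n_k\|$ for a constant $\kappa_1$.

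\textbf{Cubic terms.} I will group $-\tfrac13\sqrt{h_k}\|n_k\|^3$ with the cross term, which gives $\tfrac{\sqrt{h_k}}{3}(\|p_k\|^3-\|d_k\|^3)$. By the triangle inequality and the mean value theorem,
\[
\|d_k\|^3-\|p_k\|^3\le 3\|n_k\|\,(\|n_k\|+\|p_k\|)^2 .
\]
Everything then hinges on bounding $\sqrt{h_k}\,(\|n_k\|+\|p_k\|)^2$ by a constant, which I expect to be the \emph{main obstacle}. For the normal part, \eqref{aa1h} gives $\|n_k\|\le h_k^{-1/4}$, so $\sqrt{h_k}\|n_k\|^2\le 1$. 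For the tangential part I will try Lemma \ref{lem3.4}, which gives $\|p_k\|\le C h_k^{-1/4}$ with $C=3\max\{M_{\rm H},\sqrt{M_{\rm gQ}}\}$ when $h_k\ge1$, and hence $\sqrt{h_k}\|p_k\|^2\le C^2$. When $h_k<1$, the bound \eqref{lema3} gives $\sqrt{h_k}\|p_k\|^2\le 9\max\{M_{\rm H}^2/\sqrt{h_k},\,\|g_k^Q\|\}$. This is bounded only because $\sqrt{h_k}$ stays above a fixed floor, which the update of $I_h$ provides through $h_{\min}$ on very successful steps; I would invoke that lower bound explicitly. If that fails, the alternative is to absorb the cubic cross term into $\Delta m^B_k$, using $\Delta m^B_k\ge\tfrac16\sqrt{h_k}\|p_k\|^3$ from Lemma \ref{lem3.2}, and weaken \eqref{mkt1} accordingly. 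Either way the cubic terms are at least $-\kappa_2\|n_k\|$.

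\textbf{Conversion to $\Delta m^N_k$.} Combining the two estimates and using \eqref{kbon1} for large $k$, namely $\|n_k\|\le\Delta m^N_k/\kappa_{\rm mn}$, gives \eqref{mkt1} with
\[
\kappa_{\rm mk}:=\frac{\kappa_1+\kappa_2}{\kappa_{\rm mn}} .
\]

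\textbf{Penalty bound \eqref{mumax}.} From \eqref{mkt1} and $\Delta m^B_k\ge0$ (Lemma \ref{lem3.2}), the requirement $\Delta m_k\ge\nu\mu_k\Delta m^N_k$ holds as soon as $(1-\nu)\mu_k\ge\kappa_{\rm mk}$. Hence $\mu_k^c\le\kappa_{\rm mk}/(1-\nu)$ for all large $k$; the case $\Delta m^N_k=0$ never triggers an increase. Under \eqref{miuk}, each increase raises $\mu$ by at least $\tau_2$. Therefore $\mu_k$ can increase only finitely many times after it exceeds $\kappa_{\rm mk}/(1-\nu)$, because from then on $\mu_{k-1}\ge\mu_k^c$. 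It follows that $\{\mu_k\}$ is nondecreasing and eventually constant, equal to some
\[
\mu_{\max}\le\max\Bigl\{\tau_1,\,1+\tfrac{\tau_2}{\kappa_{\rm mk}/(1-\nu)}\Bigr\}\cdot\frac{\kappa_{\rm mk}}{1-\nu}+\mu_0 .
\]
Taking $k_1$ to be the index of the last increase, beyond the threshold where \eqref{kbon1} holds, completes the proof.
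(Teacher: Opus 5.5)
Your skeleton matches the paper's. You bound every part of $\Delta m_k$ other than $\Delta m^B_k$ and $\mu_k\Delta m^N_k$ from below by $-\kappa_{\rm mk}\Delta m^N_k$, using boundedness of $g_k$, $H_k$, $c(x_k)$ and \eqref{kbon1}, and then apply the finite-increase argument to \eqref{miuk}.

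The substantive difference is the cubic coupling term $\frac{\sqrt{h_k}}{3}(\|n_k\|^3+\|p_k\|^3-\|d_k\|^3)$. This term is nonpositive here, since $n_k\perp p_k$. The paper's proof bounds only $\Delta m^F_k$ and then asserts \eqref{mkt1}, so it never controls this term. That omission is why it can get by with (G2)--(G3) and \eqref{kbon1} alone.

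You do control the term, and your estimate is correct. However, it imports hypotheses the statement does not list:
\begin{itemize}
\item the subproblem optimality conditions behind Lemmas \ref{lem3.2}--\ref{lem3.4}, i.e.\ no hard case;
\item a uniform bound $M_{\rm gQ}$ on $\|g_k^Q\|$;
\item for $h_k<1$, the floor $h_k\ge h_{\min}$.
\end{itemize}
You should state these explicitly. The floor is also not delivered by the $I_h$ update as you suggest. The successful branch $[\sqrt{h_k}/\gamma_1,\gamma_2\sqrt{h_k}]$ has no lower cap, so $h_k\ge h_{\min}$ is a standing assumption, not a consequence of the algorithm. The paper uses it tacitly in Lemma \ref{lems} and Theorem \ref{thmhardcase}.

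Your fallback is the cleaner route, because it needs neither the floor nor Lemma \ref{lem3.4}. The ingredients are:
\begin{itemize}
\item $\|d_k\|^3-\|p_k\|^3\le 6\|n_k\|^3+6\|n_k\|\|p_k\|^2$;
\item $\sqrt{h_k}\|n_k\|^2\le 1$, which follows from \eqref{aa1h};
\item Young's inequality $\|n_k\|\|p_k\|^2\le\frac13\eta^{-6}\|n_k\|^3+\frac23\eta^3\|p_k\|^3$ with $\eta^3=1/16$.
\end{itemize}
With these, Lemma \ref{lem3.2} absorbs the $\|p_k\|^3$ contribution into $\frac12\Delta m^B_k$, and you obtain $\Delta m_k\ge\frac12\Delta m^B_k+(\mu_k-\kappa_{\rm mk})\Delta m^N_k$. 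This suffices for the penalty argument and for Lemma \ref{lemq2} with $\kappa_{\rm btm}=\frac12$, though not for the literal coefficient $1$ in \eqref{mkt1}.

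Two small fixes in the penalty part:
\begin{itemize}
\item Beyond the index where \eqref{kbon1} holds, once $\mu_k$ exceeds $\kappa_{\rm mk}/(1-\nu)$ it never increases again. So ``after it exceeds'' should read ``before it exceeds''.
\item Your explicit bound on $\mu_{\max}$ with the additive $\mu_0$ is valid only if \eqref{kbon1} holds from $k=0$, as Lemma \ref{lem3.7} provides. Otherwise, replace $\mu_0$ by the penalty value at the index from which \eqref{kbon1} holds.
\end{itemize}
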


\begin{proof}
We know from \eqref{deltam} that $\mu_k$ satisfies
\begin{equation}\label{mkt2}
  \Delta m_k\geq\nu\mu_k\Delta m^N_k.
\end{equation}
Since Assumptions $(G2)-(G3)$ imply that $g_k$ and $H_k$ are bounded, \eqref{kbon1} gives that $\|n_k\|\leq\Delta m^N_k/\kappa_{\rm mn}$ and $\Delta m^N_k\leq\|c(x_k)\|$, and $c(x_k)$ is bounded by Assumption $(G2)$. Thus we have that
\begin{eqnarray}\label{59}
\begin{aligned}
  \Delta m^F_k\geq-\kappa_{\rm mk}\Delta m_k^N
  \end{aligned}
\end{eqnarray}
for some constant $\kappa_{\rm mk}>0$. Then \eqref{59} gives \eqref{mkt1}. We also obtain that
\begin{eqnarray*}
 \Delta m_k\geq(\mu_k-\kappa_{\rm mk})\Delta m_k^N
\end{eqnarray*}
due to $\Delta m_k^B > 0$.

Hence \eqref{mkt2} is evidently satisfied for all $\mu_k\geq\frac{\kappa_{\rm mk}}{1-\nu}$. In our algorithm, every increase of an insufficient $\mu_{k-1}$ must be by at least ${\rm max}\{(\tau_1-1)\mu_{0}, \tau_2\}$, and therefore the penalty parameter $\mu_k$ can only be increased a finite number of times. The required value $\mu_{\rm max}$ is at most the first value of $\mu_k$ that exceeds $\frac{\kappa_{\rm mk}}{1-\nu}$. We complete the proof of the lemma.
\qed
\end{proof}

\begin{lemma}\label{lemq2}
     Assume that Assumptions $(G2)-(G4)$ hold and \eqref{kbon1} holds for all $k$ sufficiently large. Then there exists a constant $\kappa_{\rm btm}>0$ and an index $k_1$ such that
\begin{eqnarray}\label{kbtm}
 \Delta m_k\geq\kappa_{\rm btm}\Delta m^B_k
\end{eqnarray}
for all $k\geq k_1$.
\end{lemma}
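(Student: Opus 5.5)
The plan is to combine the two lower bounds already available for $k\ge k_1$: the one from Lemma \ref{lemq1}, namely \eqref{mkt1} with $\mu_k=\mu_{\rm max}$, and the sufficient-decrease condition \eqref{deltam}, $\Delta m_k\ge\nu\mu_k\Delta m^N_k$. A convex combination of the two should absorb the possibly negative coefficient $\mu_{\rm max}-\kappa_{\rm mk}$ in front of $\Delta m^N_k$ while keeping a fixed fraction of $\Delta m^B_k$. I take $k_1$ to be the index furnished by Lemma \ref{lemq1}, so that $\mu_k\equiv\mu_{\rm max}$ and \eqref{mkt1} hold for all $k\ge k_1$. Recall that $\Delta m^N_k\ge0$ by Lemma \ref{lem3.1} and $\Delta m^B_k\ge0$ by Lemma \ref{lem3.2}.

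First, if $\mu_{\rm max}\ge\kappa_{\rm mk}$, then \eqref{mkt1} directly gives $\Delta m_k\ge\Delta m^B_k$, and $\kappa_{\rm btm}=1$ works. Otherwise, fix $\omega\in(0,1)$ and write $\Delta m_k=\omega\Delta m_k+(1-\omega)\Delta m_k$. Applying \eqref{mkt1} to the first term and \eqref{deltam} to the second gives
\begin{equation*}
\Delta m_k\ \ge\ \omega\Delta m^B_k+\bigl[\omega(\mu_{\rm max}-\kappa_{\rm mk})+(1-\omega)\nu\mu_{\rm max}\bigr]\Delta m^N_k .
\end{equation*}
Choose
\begin{equation*}
\omega:=\frac{\nu\mu_{\rm max}}{\nu\mu_{\rm max}+\kappa_{\rm mk}-\mu_{\rm max}}\in(0,1).
\end{equation*}
Then the bracket vanishes, and we get \eqref{kbtm} with
\begin{equation*}
\kappa_{\rm btm}=\min\Bigl\{1,\ \frac{\nu\mu_{\rm max}}{\nu\mu_{\rm max}+\kappa_{\rm mk}}\Bigr\}>0.
\end{equation*}
This simpler expression is a lower bound for $\omega$, so it remains valid because $\Delta m^B_k\ge0$. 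The constant is independent of $k$ because $\mu_{\rm max}$, $\kappa_{\rm mk}$ and $\nu$ are.

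The step that needs care is confirming that both inequalities really hold with the same $\mu_k=\mu_{\rm max}$ for all $k\ge k_1$. Inequality \eqref{deltam} holds at every iteration by construction of $\mu_k\ge\mu_k^c$ in \eqref{muc}–\eqref{miuk}, as noted after Theorem \ref{thm3.1}. The bound \eqref{mkt1} requires \eqref{kbon1}, which is part of the hypotheses. One also has to make sure $\mu_{\rm max}>0$, so that $\omega$ is well defined and positive; this holds since $\mu_k\ge\mu_0>0$.
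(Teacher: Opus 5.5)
Your proof is correct, and it differs from the paper's in a substantive way. The paper's proof covers only your first case. It claims that, because $\mu_{\rm max}$ is ``at most the first value of $\mu_k$ that exceeds $\kappa_{\rm mk}/(1-\nu)$'', one has $\mu_{\rm max}>\kappa_{\rm mk}$. It then reads $\Delta m_k\ge\Delta m^B_k$ directly off \eqref{mkt1}, giving $\kappa_{\rm btm}=1$.

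That inference does not follow. The proof of Lemma~\ref{lemq1} yields only an \emph{upper} bound on $\mu_{\rm max}$. Moreover, $\mu_k$ is raised only when $\mu_{k-1}<\mu_k^c$, so the penalty parameter may stabilize below $\kappa_{\rm mk}$, for instance when $\Delta m^B_k$ is large enough to keep $\mu_k^c$ small.

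Your convex combination of \eqref{mkt1} with \eqref{deltam} handles exactly this case. It cancels the possibly negative coefficient $\mu_{\rm max}-\kappa_{\rm mk}$, and \eqref{deltam} holds at every iteration by construction. The algebra checks out:
\begin{itemize}
\item $\omega\in(0,1)$, because the denominator exceeds $\nu\mu_{\rm max}>0$;
\item $\omega>\nu\mu_{\rm max}/(\nu\mu_{\rm max}+\kappa_{\rm mk})$.
\end{itemize}

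The trade-off is the constant. The paper's route would give $\kappa_{\rm btm}=1$, but only under an unjustified claim. Yours is valid regardless of how $\mu_{\rm max}$ compares with $\kappa_{\rm mk}$, at the price of the smaller constant $\nu\mu_{\rm max}/(\nu\mu_{\rm max}+\kappa_{\rm mk})$. That constant is still independent of $k$, which is all Lemma~\ref{lemq} and the later complexity arguments use.

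One small note: you invoke $\Delta m^B_k\ge0$ from Lemma~\ref{lem3.2}, and hence the absence of the hard case, only to pass to the unified constant. Since $\mu_{\rm max}$ and $\kappa_{\rm mk}$ are fixed, you could instead set $\kappa_{\rm btm}=1$ or $\kappa_{\rm btm}=\omega$ according to which case holds, and avoid that dependence.
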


\begin{proof}
Consider $k\geq k_1$ as in Lemma \ref{lemq1}, in which case $\mu_k=\mu_{\rm max}$.
From the proof of Lemma \ref{lemq1} we know that $\mu_{\rm max}$ is at most the first value of $\mu_k$ that exceeds $\frac{\kappa_{\rm mk}}{1-\nu}$; since $0<\nu<1$, this implies $\mu_{\rm max}>\kappa_{\rm mk}$, and hence $(\mu_{\rm max}-\kappa_{\rm mk})\Delta m^N_k\geq 0$ by Lemma \ref{lem3.1}. Combining \eqref{mkt1} with $\Delta m^B_k\geq 0$ (Lemma \ref{lem3.2}), we obtain
\begin{eqnarray}
  \Delta m_k\geq\Delta m^B_k+(\mu_{\rm max}-\kappa_{\rm mk})\Delta m^N_k\geq \Delta m^B_k.
\end{eqnarray}
Thus \eqref{kbtm} holds with $\kappa_{\rm btm}=1$, and the lemma is proved.
\qed
\end{proof}

Based on these above results, we give the following estimate of model decrease.
\begin{lemma}\label{lemq}
    Assume that Assumptions $(G2)-(G4)$ hold and that the hard case does not occur at $x_k$. Then we have the following model decrease
    \begin{equation}
        \label{eqm}
        w(x_k,\mu_k)-m(x_k,H_k,\mu_k,d_k) \geq \kappa_{\rm btm} \frac{\sqrt{h_k}}{6}\|p_k^Q\|^3.
    \end{equation}
\end{lemma}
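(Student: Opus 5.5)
The plan is to observe that the left-hand side of \eqref{eqm} is exactly the overall model decrease $\Delta m_k$ defined in \eqref{qqk}. Indeed, evaluating the model \eqref{mu} at $d=0$ gives $m(x_k,H_k,\mu_k,0)=f(x_k)+\mu_k\|c(x_k)\|=w(x_k,\mu_k)$ by \eqref{muu}. Hence
\begin{equation*}
w(x_k,\mu_k)-m(x_k,H_k,\mu_k,d_k)=m(x_k,H_k,\mu_k,0)-m(x_k,H_k,\mu_k,d_k)=\Delta m_k .
\end{equation*}
This identity is purely definitional and requires no assumptions beyond the construction of the model.

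Next I would invoke Lemma \ref{lemq2}, which (under $(G2)$--$(G4)$ and with \eqref{kbon1} available from Lemma \ref{lem3.7}) gives $\Delta m_k\ge\kappa_{\rm btm}\Delta m^B_k$ for all $k\ge k_1$. Since the hard case does not occur at $x_k$, we have $t_k\neq0$. The optimality conditions \eqref{TKM1}--\eqref{TKM2} are therefore available, and Lemma \ref{lem3.2} yields $\Delta m^B_k\ge\frac16\sqrt{h_k}\|p_k^Q\|^3$. Chaining the two inequalities with $\kappa_{\rm btm}>0$ gives \eqref{eqm} directly.

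The main obstacle is the index restriction: Lemma \ref{lemq2} holds only for $k\ge k_1$, whereas \eqref{eqm} is stated without such a qualifier. I would handle this in one of two ways. The first is to state the result for $k\ge k_1$, which is all the subsequent complexity count needs, since finitely many initial iterations only add a constant. Alternatively, one can argue directly from the decomposition of $\Delta m_k$ displayed after \eqref{qqk}, combined with \eqref{mkt1} and $\mu_k\ge\mu_k^c$. Once $\mu_k\ge\kappa_{\rm mk}$, the normal and cross terms are nonnegative, so $\Delta m_k\ge\Delta m^B_k$. This is exactly the argument of Lemma \ref{lemq2} with $\kappa_{\rm btm}=1$.

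I would adopt the first reading and note explicitly that $\kappa_{\rm btm}=1$ can be used. The result then reads $\Delta m_k\ge\frac16\sqrt{h_k}\|p_k^Q\|^3$ for all $k\ge k_1$.
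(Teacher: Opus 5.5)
This is the paper's own argument: since $m(x_k,H_k,\mu_k,0)=w(x_k,\mu_k)$, the left side is $\Delta m_k$, and Lemma~\ref{lemq2} followed by Lemma~\ref{lem3.2} gives \eqref{eqm}. Your observation that this only covers $k\ge k_1$ is correct, and the paper's proof carries the same tacit restriction; your ``alternative'' does not remove it, because it still needs $\mu_k\ge\kappa_{\rm mk}$ (and the cubic cross term $\frac{\sqrt{h_k}}{3}(\|n_k\|^3+\|p_k\|^3-\|d_k\|^3)$ is in fact nonpositive because $n_k\perp p_k$, so only the aggregate bound \eqref{mkt1} makes it harmless), whereas combining \eqref{mkt1} with \eqref{deltam} does remove it, giving $\Delta m_k\ge\frac{\nu\mu_0}{\nu\mu_0+\kappa_{\rm mk}}\Delta m^B_k$ at every $k$ where \eqref{mkt1} holds.
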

\begin{proof}
From Assumptions $(G2)-(G4)$, Lemma \ref{lemq2} and Lemma \ref{lem3.2} immediately give that
\begin{eqnarray*}
w(x_k,\mu_k)-m(x_k,H_k,\mu_k,d_k) &\geq& \kappa_{\rm btm}\Delta m^B_k
\geq \kappa_{\rm btm} \frac{\sqrt{h_k}}{6}\|p_k^Q\|^3.
\end{eqnarray*} 
Thus the proof is completed.
\qed
\end{proof}

Thus if the current iterate $x_k$ is accepted, the function decreases at iterate $x_k$ can also
be estimated in the Hessian.

\begin{lemma}\label{lems}
    Assume that Assumptions $(G2)-(G4)$ hold, that the hard case does not occur at $x_k$, and that the $k$-th iteration is successful. Then we have the following function decrease
    \begin{equation}\label{eqs}
        w(x_k,\mu_k)-w(x_k+d_k,\mu_k) \geq -\frac{\eta_1\kappa_{\rm btm} \left(\lambda_{\min} (H_k^Q)\right)^3}{6h_k}.
    \end{equation}
\end{lemma}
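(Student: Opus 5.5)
Since the $k$-th iteration is successful, $\rho_k\ge\eta_1$. By \eqref{rhok1} this gives
\begin{equation*}
w(x_k,\mu_k)-w(x_k+d_k,\mu_k)\ \ge\ \eta_1\bigl(m(x_k,H_k,\mu_k,0)-m(x_k,H_k,\mu_k,d_k)\bigr).
\end{equation*}
Because $m(x_k,H_k,\mu_k,0)=w(x_k,\mu_k)$, Lemma \ref{lemq} applies and yields
\begin{equation*}
w(x_k,\mu_k)-w(x_k+d_k,\mu_k)\ \ge\ \eta_1\kappa_{\rm btm}\frac{\sqrt{h_k}}{6}\|p_k^Q\|^3.
\end{equation*}
So the statement reduces to a lower bound on $\sqrt{h_k}\|p_k^Q\|^3$ in terms of $\lambda_{\min}(H_k^Q)$ and $h_k$.

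Our first step is to bound $\|p_k^Q\|$ from below using the second-order optimality condition \eqref{TKM2}, i.e. $H_k^Q+\sqrt{h_k}\|p_k^Q\|\,I\succeq 0$. We are outside the hard case, so $t_k\neq0$ and \eqref{TKM2} is available. Applying it to a unit eigenvector of $H_k^Q$ associated with $\lambda_{\min}(H_k^Q)$ gives $\sqrt{h_k}\|p_k^Q\|\ge-\lambda_{\min}(H_k^Q)$.

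The case split is on the sign of $\lambda_{\min}(H_k^Q)$.
\begin{itemize}
\item If $\lambda_{\min}(H_k^Q)\ge0$, the right-hand side of \eqref{eqs} is nonpositive. The left-hand side is nonnegative by the bound above, so \eqref{eqs} holds trivially.
\item If $\lambda_{\min}(H_k^Q)<0$, then $\|p_k^Q\|\ge -\lambda_{\min}(H_k^Q)/\sqrt{h_k}>0$. Cubing gives $\|p_k^Q\|^3\ge -\lambda_{\min}(H_k^Q)^3/h_k^{3/2}$.
\end{itemize}

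Substituting the negative case into the displayed bound gives
\begin{equation*}
w(x_k,\mu_k)-w(x_k+d_k,\mu_k)\ \ge\ \frac{\eta_1\kappa_{\rm btm}}{6}\sqrt{h_k}\cdot\frac{-\lambda_{\min}(H_k^Q)^3}{h_k^{3/2}}=-\frac{\eta_1\kappa_{\rm btm}\lambda_{\min}(H_k^Q)^3}{6h_k},
\end{equation*}
which is \eqref{eqs}.

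The main point needing care is that Lemma \ref{lemq} relies on Lemma \ref{lemq2}, which is stated only for $k\ge k_1$. The claim should therefore be read for $k\ge k_1$, or with $\kappa_{\rm btm}$ as given there. The right-hand side of \eqref{eqs} must also be read with $\lambda_{\min}(H_k^Q)$ negative for the bound to be informative. Apart from this bookkeeping the argument is a direct chaining of $\rho_k\ge\eta_1$, Lemma \ref{lemq}, and \eqref{TKM2}.
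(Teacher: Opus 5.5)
Your proposal is correct and follows the paper's proof essentially step for step: it makes the same sign split on $\lambda_{\min}(H_k^Q)$, derives $\sqrt{h_k}\|p_k^Q\|\ge -\lambda_{\min}(H_k^Q)$ from \eqref{TKM2}, substitutes into the model-decrease bound \eqref{eqm}, and then uses the success condition $\rho_k\ge\eta_1$. Your remark that Lemma \ref{lemq} inherits the restriction $k\ge k_1$ from Lemma \ref{lemq2} is fair bookkeeping that the paper leaves implicit.
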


\begin{proof}
Note that $h_k \geq h_{\min} > 0$. If $\lambda_{\min}(H_k^Q)\geq 0$, then the right-hand side of \eqref{eqs} is nonpositive, while the left-hand side is nonnegative because the $k$-th iteration is successful; hence the assertion holds trivially. It remains to consider the case $\lambda_{\min}(H_k^Q)<0$. From the optimality condition \eqref{TKM2}, i.e., $H_k^Q+\sqrt{h_k}\|p_k^Q\|I\succeq 0$, and the Rayleigh quotient of the minimal eigenpair, we have $\sqrt{h_k}\|p_k^Q\|\geq -\lambda_{\min}(H_k^Q)$, which implies $\|p_k^Q\|\geq -\lambda_{\min}(H_k^Q)/\sqrt{h_k}$ and consequently
\begin{eqnarray*}
\|p_k^Q\|^3\geq \frac{\left(-\lambda_{\min}(H_k^Q)\right)^3}{h_k^{3/2}}.
\end{eqnarray*}
Plugging this bound into the model decrease \eqref{eqm} gives
\begin{eqnarray*}
w(x_k,\mu_k)-m(x_k,H_k,\mu_k,d_k) &\geq& \frac{\kappa_{\rm btm}\sqrt{h_k}}{6}\|p_k^Q\|^3
\geq -\frac{\kappa_{\rm btm} \left(\lambda_{\min}(H_k^Q)\right)^3}{6h_k}.
\end{eqnarray*}
Since the $k$-th iteration is successful, we have that
\begin{eqnarray*}
w(x_k,\mu_k)-w(x_k+d_k,\mu_k) &\geq& \eta_1 \left (w(x_k,\mu_k)-m(x_k,H_k,\mu_k,d_k)\right)\\
&\geq& -\frac{\eta_1\kappa_{\rm btm} \left(\lambda_{\min} (H_k^Q)\right)^3}{6h_k}.
\end{eqnarray*}
Thus the result is completed.\\
\qed
\end{proof}

Next, we prove that the number of unsuccessful iterations is bounded by a quantity determined by the number of successful iterations. 
\begin{lemma}\label{lemupper}
    Assume that Assumptions $(G1)-(G4)$ hold and that the hard case does not occur at $x_k$. Then $h_k$ has a uniform upper bound $\kappa_{\rm h}=\mathcal{O}(\epsilon^{-q})$ for some $q\ge0$ and all $k\ge0$, that is
    \begin{equation}\label{equniformub}
        h_k \leq \kappa_{\rm h}\quad \text{for all } k\ge0.
    \end{equation}
\end{lemma}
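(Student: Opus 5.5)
The plan is the standard adaptive-regularization argument (as in \cite{Cartis2,Cartis5}): show that once $\sqrt{h_k}$ exceeds a threshold $\sqrt{h^\ast}$, the iteration is necessarily very successful. By the update rules for $I_h$ in Algorithm~\ref{alg:asoda}, $\sqrt{h}$ is then never increased from such a level. Consequently $\sqrt{h_k}\le\gamma_3\max\{\sqrt{h_0},\sqrt{h^\ast}\}$, up to the bisection tolerance $\sigma_b$, and we can take $\kappa_{\rm h}$ to be the square of this quantity.

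\textbf{Step 1: the ratio test.} Assume $h_k\ge 1$, so that Lemma~\ref{lem3.4} and Lemma~\ref{lem3.5} apply, and write
\[
1-\rho_k=\frac{w(x_k+d_k,\mu_k)-m(x_k,H_k,\mu_k,d_k)}{\Delta m_k}.
\]
For the numerator, use \eqref{pmerror} together with the bounds on $\mu_k$ from Lemma~\ref{lemq1}: $\mu_k\le\mu_{\rm max}$ for $k\ge k_1$, and before $k_1$ the parameter takes only finitely many values. This gives a numerator of at most $(1+\mu_{\rm max})\kappa_0\|d_k\|^2$, with $\|d_k\|^2\le 2\|n_k\|^2+2\|p_k^Q\|^2$. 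For the denominator, use Lemma~\ref{lemq1} and Lemma~\ref{lemq2}:
\[
\Delta m_k\ge \tfrac12\Delta m_k^B+\tfrac12(\mu_k-\kappa_{\rm mk})\Delta m_k^N .
\]
Then apply Lemma~\ref{lem3.2} to the $B$-part and Lemma~\ref{lem3.7}, giving $\Delta m_k^N\ge\kappa_{\rm mn}\|n_k\|$, to the $N$-part.

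\textbf{Step 2: comparing terms.} The tangential part is handled directly. The numerator term is $\lesssim\|p_k^Q\|^2$, while the denominator term is $\gtrsim\sqrt{h_k}\|p_k^Q\|^3$. The ratio is therefore $\lesssim 1/(\sqrt{h_k}\|p_k^Q\|)$, which is not small unless $\|p_k^Q\|$ is bounded below.

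\textbf{Main obstacle: lower bound on $\|p_k^Q\|$.} I would obtain this from the non-termination of the loop, i.e.\ $\|g_k^Q\|>\kappa_g\epsilon$ or $\lambda_{\min}(H_k^Q)<-\kappa_H\sqrt\epsilon$, combined with \eqref{TKM1}.
\begin{itemize}
\item In the curvature case, $\sqrt{h_k}\|p_k^Q\|\ge\kappa_H\sqrt\epsilon$, so the ratio is $\lesssim\epsilon^{-1/2}$ times constants. That alone is not $<1-\eta_2$.
\item In the gradient case, \eqref{TKM1} gives $\kappa_g\epsilon<\|g_k^Q\|\le(M_{\rm H}+\sqrt{h_k}\|p_k^Q\|)\|p_k^Q\|$. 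Hence $\|p_k^Q\|\gtrsim\min\{\epsilon/M_{\rm H},\ (\epsilon/\sqrt{h_k})^{1/2}\}$.
\end{itemize}
To close the argument I would sharpen the numerator to third order using (G1): the Lipschitz Hessians of $f$ and $c$ make the error $O(\|d_k\|^3)$ plus $\|d_k\|^2$ terms coming from the mismatch between $H_k$ and $\nabla^2 f$. This is exactly where $\epsilon$-dependence enters. The net effect is a threshold of the form $\sqrt{h^\ast}=O(\epsilon^{-q'})$, so $\kappa_{\rm h}=O(\epsilon^{-q})$ for some $q\ge0$, consistent with the statement.

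\textbf{Step 3: the normal part.} The normal contribution $\|n_k\|^2/(\mu\kappa_{\rm mn}\|n_k\|)\lesssim\|n_k\|\le h_k^{-1/4}$ is small once $h_k$ is large, by \eqref{nkdejie}. This mirrors the argument already used in the proof of Theorem~\ref{thm3.1}.

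\textbf{Step 4: conclude.} Combine the two thresholds into a single $h^\ast$. An induction on $k$ over the three $I_h$ update branches, with tolerance $\sigma_b$, then yields $h_k\le\kappa_{\rm h}$ for all $k$. The hard-case iterations are excluded by the hypothesis of the lemma.
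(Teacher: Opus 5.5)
The threshold mechanism in your proposal is the paper's: every step with $h_k\ge h^\ast$ is very successful, so $\sqrt{h_k}$ can overshoot $\sqrt{h^\ast}$ by at most one factor $\gamma_3$. However, the step you call the main obstacle is a genuine gap, and your proposed remedy cannot work. In the negative-curvature branch, the numerator of $1-\rho_k$ cannot be sharpened to third order. $H_k$ is only a bounded approximation under (G3). More fundamentally, the model \eqref{mu} linearizes the constraints inside the norm. For example, with $c(x_k)=0$ and $d_k\in\operatorname{null}(A_k)$ one has $\mu_k(\|c(x_k+d_k)\|-\|c_k+A_kd_k\|)=\tfrac{\mu_k}{2}\|[d_k^T\nabla^2c_i(x_k)d_k]_{i=1}^m\|+O(\|d_k\|^3)$, which is genuinely quadratic; this is the mismatch flagged in Remark \ref{rem:varphimin}. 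Your own sharpened form keeps an $O(\|d_k\|^2)$ term, and that term alone gives $1-\rho_k\lesssim(1+\mu_{\max})\kappa_0/(\sqrt{h_k}\|p_k^Q\|)$. In this branch, \eqref{TKM2} only guarantees $\sqrt{h_k}\|p_k^Q\|\ge|\lambda_{\min}(H_k^Q)|\ge\kappa_H\sqrt{\epsilon}$, a lower bound that does not improve as $h_k$ grows. The quantity $\sqrt{h_k}\|p_k^Q\|$ becomes large only once $h_k$ is large relative to a negative power of $\|g_k^Q\|$ (for large $h_k$ it behaves like $h_k^{1/4}\|g_k^Q\|^{1/2}$), and $\|g_k^Q\|$ has no lower bound in this branch. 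The estimate is essentially sharp: for $c(x_k)=0$ and $g_k^Q$ nearly orthogonal to $S_1(H_k^Q)$, the mismatch is of order $\mu_k\|p_k\|^2$ while the predicted decrease is of order $|\lambda_{\min}(H_k^Q)|\,\|p_k\|^2$. So your sentence ``the net effect is a threshold of the form $\sqrt{h^\ast}=O(\epsilon^{-q'})$'' is asserted, not derived, and no threshold depending on $\epsilon$ alone comes out of this argument. The paper does not close this branch either. Its proof establishes the threshold only in the gradient-dominated branch and explicitly defers the negative-curvature branch to Subsection \ref{sec:hardcase} and the standing hypothesis of Remark \ref{rem:varphimin}. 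Theorem \ref{thmhardcase} covers only $|\lambda_{\min}(H_k^Q)|\ge\kappa_{\rm nc}$, an $\epsilon$-independent constant. You should either restrict the claim to the branches you can prove or state that hypothesis explicitly, rather than appeal to a third-order expansion.

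Second, your case split omits iterates at which the loop continues only because $\|c(x_k)\|>\kappa_c\epsilon$, with $g_k^Q$ small and $H_k^Q$ nearly positive semidefinite. There the term-by-term comparison gives nothing for the tangential error, so compare it with the normal decrease instead. By \eqref{hk40} the tangential error is $\lesssim\|p_k^Q\|^2\lesssim h_k^{-1/2}$. By \eqref{deltam} and Lemma \ref{lem3.1}, $\Delta m_k\ge\nu\mu_0\beta/(\kappa_{\rm n}h_k^{1/4})$ once $h_k\ge(\beta/(\kappa_{\rm n}\kappa_c\epsilon))^4$, exactly as in Theorem \ref{thm3.1}. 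So this branch closes with a threshold $O(\epsilon^{-4})$. Using \eqref{deltam} for the normal part also avoids needing $\mu_k>\kappa_{\rm mk}$, which is only known for $k\ge k_1$. On the positive side, your gradient branch needs no sharpening. With your numerator $\lesssim\|p_k^Q\|^2$ and $\sqrt{h_k}\|p_k^Q\|\ge\min\{\sqrt{h_k}\,\kappa_g\epsilon/(2M_{\rm H}),\,h_k^{1/4}(\kappa_g\epsilon/2)^{1/2}\}$, it already closes at $h^\ast=O(\epsilon^{-2})$. That is sharper than the paper's route, which ends with $\kappa_{\rm h}=O(\epsilon^{-12})$. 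The paper bounds the numerator by $\kappa_{\rm m}(1+\mu_k)h_k^{-1/2}$ and obtains $1-\rho_k\lesssim1/(h_k\|p_k^Q\|^3)$. It then bounds $\|p_k^Q\|$ below using $\kappa_{\rm h}$ itself and resolves that implicit dependence through \eqref{eqkappah}.
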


\begin{proof}
    We show that once $h_k$ exceeds a threshold $h^\star=\mathcal{O}(\epsilon^{-q})$, the trial step is very successful, so that $h_{k+1}\le h_k$; combined with the bounded growth factor $\gamma_3^2$ at unsuccessful steps, this yields the uniform bound \eqref{equniformub}.

    From the definition \eqref{rhok1} of $\rho_k$ we have
    \begin{equation}\label{eqrho1}
        \rho_k=1-\frac{w(x_k+d_k,\mu_k)-m(x_k,H_k,\mu_k,d_k)}{\Delta m_k}.
    \end{equation}
    Lemma \ref{lem3.5} gives the model-error bound $w(x_k+d_k,\mu_k)-m(x_k,H_k,\mu_k,d_k)\le\kappa_{\rm m}(1+\mu_k)h_k^{-1/2}$, while Lemma \ref{lemq} together with Lemma \ref{lem3.2} gives the model-decrease lower bound $\Delta m_k\ge\kappa_{\rm btm}\frac{\sqrt{h_k}}{6}\|p_k^Q\|^3$. Substituting into \eqref{eqrho1} yields
    \begin{equation}\label{eqrholower}
        \rho_k \geq 1-\frac{6\kappa_{\rm m}(1+\mu_k)}{\kappa_{\rm btm}\,h_k\,\|p_k^Q\|^3}.
    \end{equation}
    It remains to bound $\|p_k^Q\|$ away from zero. If the algorithm has not terminated at $x_k$, then at least one of the stopping criteria fails. If $\|c(x_k)\|>\kappa_c\epsilon$, the normal step drives the decrease (see \eqref{lem1} and \eqref{deltam}), and \eqref{eqrholower} is resolved with a threshold of the same form as below. The negative-curvature case $\lambda_{\min}(H_k^Q)\le-\kappa_H\sqrt{\epsilon}$ is treated separately in Subsection \ref{sec:hardcase}: as explained in Remark \ref{rem:varphimin}, the tangential decrease $-\lambda_{\min}(H_k^Q)^3/(6h_k)$ yields the required $\Omega(\epsilon^{3/2})$ only when $h_k=O(1)$, so that the bound $\kappa_{\rm h}=O(1)$ (rather than the polynomial bound derived here) is what makes the negative-curvature branch contribute $O(\epsilon^{-3/2})$ iterations. In the remaining, gradient-dominated case $\|g_k^Q\|\ge\kappa_g\epsilon$, the optimality condition \eqref{TKM1} gives $\|g_k^Q\|\le(M_{\rm H}+\sqrt{h_k}\|p_k^Q\|)\|p_k^Q\|$, so that
    \begin{equation}
        \|p_k^Q\|\ge \frac{\|g_k^Q\|}{M_{\rm H}+\sqrt{h_k}\|p_k^Q\|}
        \ge \frac{\kappa_g\epsilon}{M_{\rm H}+3\max\{M_{\rm H},\sqrt{M_{gQ}}\}\kappa_{\rm h}^{1/4}}=:\frac{\kappa_g\epsilon}{\kappa_{\rm pH}},
    \end{equation}
    where the second inequality uses $\|g_k^Q\|\ge\kappa_g\epsilon$ and, via \eqref{hk40} and \eqref{equniformub}, $\sqrt{h_k}\|p_k^Q\|\le3\max\{M_{\rm H},\sqrt{M_{gQ}}\}\kappa_{\rm h}^{1/4}$. Combining this with \eqref{eqrholower} gives
    \begin{equation}
        \rho_k \geq 1-\frac{6\kappa_{\rm m}(1+\mu_0)\kappa_{\rm pH}^3}{\kappa_{\rm btm}\kappa_g^3\,\epsilon^{3}\,h_k}.
    \end{equation}
    Hence $\rho_k\ge\eta_2$ whenever
    \begin{equation}\label{eqhstar}
        h_k \ge h^\star:=\frac{6\kappa_{\rm m}(1+\mu_0)\kappa_{\rm pH}^3}{(1-\eta_2)\kappa_{\rm btm}\kappa_g^3\,\epsilon^{3}}.
    \end{equation}
    Since $\kappa_{\rm pH}=M_{\rm H}+3\max\{M_{\rm H},\sqrt{M_{gQ}}\}\kappa_{\rm h}^{1/4}$ depends on the bound $\kappa_{\rm h}$ that we are defining, the requirement $\kappa_{\rm h}\ge\gamma_3^2h^\star$ is implicit in $\kappa_{\rm h}$. We resolve this implicit dependence by choosing $\kappa_{\rm h}$ large enough that the two conditions $\kappa_{\rm h}\ge\gamma_3^2h^\star$ and $\kappa_{\rm pH}\le\left(M_{\rm H}+3\max\{M_{\rm H},\sqrt{M_{gQ}}\}\right)\kappa_{\rm h}^{1/4}$ hold simultaneously; the explicit choice
    \begin{equation}\label{eqkappah}
        \kappa_{\rm h}:=\max\left\{h_0,\ \gamma_3^{8}\left(\frac{6\kappa_{\rm m}(1+\mu_0)\left(M_{\rm H}+3\max\{M_{\rm H},\sqrt{M_{gQ}}\}\right)^3}{(1-\eta_2)\kappa_{\rm btm}\kappa_g^3}\right)^{4}\epsilon^{-12}\right\}
    \end{equation}
    satisfies $\kappa_{\rm pH}\le\left(M_{\rm H}+3\max\{M_{\rm H},\sqrt{M_{gQ}}\}\right)\kappa_{\rm h}^{1/4}$ (for $\kappa_{\rm h}\ge1$) and $\gamma_3^2h^\star\le\kappa_{\rm h}$. Thus, as soon as $h_k\ge h^\star$ the step is very successful and $h$ is not increased; an unsuccessful step can increase $h_k$ by at most the factor $\gamma_3^2$, which is already absorbed into \eqref{eqkappah}. This proves \eqref{equniformub}. Since $\kappa_{\rm h}=\mathcal{O}(\epsilon^{-12})$, we have $\log\kappa_{\rm h}=\mathcal{O}(\log\epsilon^{-1})$.
\qed
\end{proof}

Before estimating the function decrease in terms of the reduced gradient $\|g_{k+1}^Q\|$, we introduce a phase decomposition that is essential to keep the constants below independent of $\epsilon$. We say that iterate $x_k$ is \emph{near-feasible} if $\|c(x_k)\|\le\kappa_c\epsilon$ (so that the feasibility criterion \eqref{eq0} is already satisfied), and \emph{infeasible} otherwise. The next lemma shows that infeasible successful iterations are few, and therefore the optimality analysis may be restricted to the near-feasible phase.

\begin{lemma}\label{lemfeas}
    Assume that Assumptions $(G1)-(G4)$ hold and that $\epsilon>0$ is sufficiently small so that $\beta/(\kappa_{\rm n}\kappa_{\rm h}^{1/4})\ge\kappa_c\epsilon$. Then the number of successful iterations $k$ for which $\|c(x_k)\|>\kappa_c\epsilon$ is at most $\mathcal{O}(\epsilon^{-1})$.
\end{lemma}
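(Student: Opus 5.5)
The argument is a telescoping bound on the auxiliary function $W$ of Lemma \ref{lem3.6}, run over the successful iterations at which $\|c(x_k)\|>\kappa_c\epsilon$. Fix such a successful iteration $k$. Two facts give the decrease. The acceptance test $\rho_k\ge\eta_1$ together with \eqref{deltam} gives
\begin{equation*}
w(x_k,\mu_k)-w(x_{k+1},\mu_k)\ge\eta_1\Delta m_k\ge\eta_1\nu\mu_k\Delta m_k^N.
\end{equation*}
Lemma \ref{lem3.1} gives
\begin{equation*}
\Delta m_k^N\ge\min\Bigl\{\|c(x_k)\|,\ \beta/\bigl(\kappa_{\rm n}h_k^{1/4}\bigr)\Bigr\}.
\end{equation*}
By Lemma \ref{lemupper}, $h_k\le\kappa_{\rm h}$, so $\beta/(\kappa_{\rm n}h_k^{1/4})\ge\beta/(\kappa_{\rm n}\kappa_{\rm h}^{1/4})$. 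The hypothesis of the lemma makes this quantity at least $\kappa_c\epsilon$. Since also $\|c(x_k)\|>\kappa_c\epsilon$, both entries of the minimum exceed $\kappa_c\epsilon$, and hence $\Delta m_k^N\ge\kappa_c\epsilon$. Dividing by $\mu_k$ leaves a decrease of at least $\eta_1\nu\kappa_c\epsilon$ per such iteration, with no dependence on $\mu_k$.

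Next I transfer this decrease to $W$. By \eqref{varphi2} of Lemma \ref{lem3.6}, applied with $q=1$ between consecutive successful indices $k_r$ and $k_{r+1}$,
\begin{equation*}
W(x_{k_{r+1}},\mu_{k_{r+1}})\le W(x_{k_r},\mu_{k_r})-\eta_1\Delta m_{k_r}/\mu_{k_r}.
\end{equation*}
That lemma also shows that $W$ is nonincreasing along successful iterations. Summing over all successful iterations, and keeping only the infeasible ones on the right-hand side, gives
\begin{equation*}
0\le W(x_{k_r},\mu_{k_r})\le W(x_{k_0},\mu_{k_0})-N\,\eta_1\nu\kappa_c\epsilon,
\end{equation*}
where $N$ is the number of infeasible successful iterations and the lower bound $0$ is \eqref{varphi1}. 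Therefore
\begin{equation*}
N\le\frac{W(x_{k_0},\mu_{k_0})}{\eta_1\nu\kappa_c\epsilon}.
\end{equation*}
Finally I bound the starting value. Since $\mu_{k_0}\ge\mu_0>0$, we have $W(x_{k_0},\mu_{k_0})\le(f(x_{k_0})-f_{\min})/\mu_0+\|c(x_{k_0})\|$. Assumption (G2) bounds this by a constant independent of $\epsilon$, which gives $N=\mathcal{O}(\epsilon^{-1})$.

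The main obstacle is that Lemma \ref{lem3.1} needs the hard case not to occur: otherwise $h_k$ may come from Algorithm \ref{alghardcase}, where $h_{k,i}$ is its own quantity. I would first argue that the bound $h_k\le\kappa_{\rm h}$ remains valid there, because each perturbation attempt raises $h$ by at most the factor $\gamma_3^2$. If that argument fails, I would state the hypothesis $h_k\le\kappa_{\rm h}$ explicitly for the accepted step. A second point to check is the $1/\mu_k$ scaling. The decrease $\eta_1\nu\mu_k\Delta m_k^N/\mu_k$ is independent of $\mu_k$, so the only place $\mu_k$ enters is the initial value of $W$, and there only $\mu_0$ is needed.
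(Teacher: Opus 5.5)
Your proposal is correct and follows the paper's proof essentially step for step: Lemma \ref{lem3.1} with $h_k\le\kappa_{\rm h}$ and the smallness hypothesis gives $\Delta m_k^N\ge\kappa_c\epsilon$, \eqref{deltam} turns this into a $\mu_k$-free decrease of $W$ by at least $\eta_1\nu\kappa_c\epsilon$ through Lemma \ref{lem3.6}, and $W\ge0$ then yields $N\le W(x_0,\mu_0)/(\eta_1\nu\kappa_c\epsilon)$. You are right that the paper's appeal to $\mu_{\max}$ from Lemma \ref{lemq1} is unnecessary; also, the $\epsilon$-independence of your starting value does not come from (G2), which gives no upper bound on $f$, but from $x_{k_0}=x_0$ and $\mu_{k_0}\ge\mu_0$, which bound it by exactly the paper's $W(x_0,\mu_0)$.
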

\begin{proof}
    For an infeasible iterate $x_k$, Lemma \ref{lem3.1} and $\|c(x_k)\|>\kappa_c\epsilon$ give
    \begin{equation*}
        \Delta m_k^N\ge\min\left\{\|c(x_k)\|,\frac{\beta}{\kappa_{\rm n}h_k^{1/4}}\right\}
        \ge\min\left\{\kappa_c\epsilon,\frac{\beta}{\kappa_{\rm n}\kappa_{\rm h}^{1/4}}\right\}=\kappa_c\epsilon,
    \end{equation*}
    where the last equality uses the smallness of $\epsilon$ and Lemma \ref{lemupper}. From \eqref{deltam} we have $\Delta m_k\ge\nu\mu_k\Delta m_k^N\ge\nu\mu_0\kappa_c\epsilon$. For a successful iteration $k$, Lemma \ref{lem3.6} yields
    \begin{equation*}
        W(x_{k+1},\mu_{k+1})\le W(x_k,\mu_k)-\eta_1\frac{\Delta m_k}{\mu_k}\le W(x_k,\mu_k)-\eta_1\nu\kappa_c\epsilon.
    \end{equation*}
    Since $W\ge0$ by \eqref{varphi1} and $\mu_k\le\mu_{\rm max}$ by Lemma \ref{lemq1}, the number of such infeasible successful iterations is at most $W(x_0,\mu_0)/(\eta_1\nu\kappa_c\epsilon)=\mathcal{O}(\epsilon^{-1})$.
\qed
\end{proof}

Consequently, the infeasible successful iterations contribute only $\mathcal{O}(\epsilon^{-1})$ iterations, which is dominated by $\mathcal{O}(\epsilon^{-3/2})$ for $\epsilon<1$. Hence, in the sequel of the optimality analysis we restrict attention to the near-feasible phase $\|c(x_k)\|\le\kappa_c\epsilon$, in which the normal step obeys $\|n_k\|\le\kappa_{\rm n}\|c(x_k)\|\le\kappa_{\rm n}\kappa_c\epsilon$ by Assumption $(G4)$.

Similarly, if the current iterate $x_k$ is accepted, the function decreases at iterate
$x_k$ can also be estimated in the norm $\|g_{k+1}^Q\|$.

First, we analyze the continuity of $g_k^Q$. By Assumptions $(G1)-(G4)$, $g_k$, $H_k$, and $Q_k$ are Lipschitz continuous, and $g_k$, $H_k$, and $n_k$ are bounded. Based on the Lipschitz continuity of $c_k$, it can be deduced that $n_k$ is Lipschitz continuous. Thus, for any $k_1, k_2$ we have
\begin{eqnarray*}
\|g_{k_1}^Q-g_{k_2}^Q\| 
&\leq& \|Q^T_{k_1}(g_{k_1}+H_{k_1}n_{k_1})-Q^T_{k_1}(g_{k_2}+H_{k_2}n_{k_2})\|+\|Q^T_{k_1}(g_{k_2}+H_{k_2}n_{k_2}) \\
&&-Q^T_{k_2}(g_{k_2}+H_{k_2}n_{k_2})\| \\
&\leq& \|g_{k_1}-g_{k_2}\|+\|H_{k_1}\|\|n_{k_1}-n_{k_2}\|+\|H_{k_1}-H_{k_2}\|\|n_{k_2}\| \\
&&+\|Q^T_{k_1}-Q^T_{k_2}\|\|g_{k_2}+H_{k_2}n_{k_2}\| \\
&\leq& L_g\|x_{k_1}-x_{k_2}\|+M_{\rm H}\cdot L_n\|x_{k_1}-x_{k_2}\|+L_H\|x_{k_1}-x_{k_2}\|\cdot M_n \\
&&+L_N\|x_{k_1}-x_{k_2}\|\cdot M_{gBn} \\
&\leq& C_1 \|x_{k_1}-x_{k_2}\| + L_N\|x_{k_1}-x_{k_2}\|\cdot M_{gBn}\\
&\leq& \kappa_{\rm GL} \|x_{k_1}-x_{k_2}\|,
\end{eqnarray*}
where $\kappa_{\rm GL}=C_1+L_N\cdot M_{gBn}$; all constants appearing in this estimate ($L_g$, $L_n$, $L_H$, $L_N$, $M_n$, $M_{gBn}$, $C_1$, $\kappa_{\rm GL}$, and $\kappa_2,\kappa_3$ below) are independent of $\epsilon$ and of the iteration index $k$. Second, because $f$ and $c$ have Lipschitz continuous second derivatives (Assumption $(G1)$), the reduced-gradient map $x\mapsto g^Q(x)=Q(x)^T(g(x)+H(x)n(x))$ is differentiable with a Lipschitz continuous Jacobian along the iteration path. Expanding $g^Q(x_{k+1})$ at $x_k$ to second order, and observing that the directional derivative of $g^Q$ along the tangential step $p_k=Q_kp_k^Q$ equals $H_k^Qp_k^Q$ up to an $\mathcal{O}(\|c(x_k)\|)$ correction (the derivative of the basis $Q$ and of the normal step $n$ contribute only through the small feasibility residual), there exist constants $\kappa_2,\kappa_3>0$, independent of $\epsilon$, such that
\begin{equation}\label{eqsecondorder}
\| g_{k+1}^Q-g_k^Q-H_k^Qp_k^Q\| \leq \kappa_2\|d_k\|^2+\kappa_3\|c(x_k)\|\,\|d_k\|.
\end{equation}
In the near-feasible phase $\|c(x_k)\|\le\kappa_c\epsilon$ this reduces to $\kappa_2\|d_k\|^2+\kappa_3\kappa_c\epsilon\|d_k\|$, which is the key ingredient that keeps the constants below independent of $\epsilon$. We now begin the proof.

\begin{lemma}\label{lemfo}
     Assume that the hard case does not occur at $x_k$, Assumptions $(G1)-(G4)$ hold, the $k$-th iteration is successful, $x_k$ is near-feasible in the sense that $\|c(x_k)\|\le\kappa_c\epsilon$, and the reduced gradient is not small, i.e.\ $\|g_k^Q\|\ge\kappa_g\epsilon/2$. Then for a constant $\varsigma>0$ independent of $\epsilon$ we have the following function decrease
\begin{eqnarray}\label{eqfunct}
w(x_k,\mu_k)-w(x_k+d_k,\mu_k)
\geq \eta_1\kappa_{\rm btm}\frac{\sqrt{h_k}}{6}\frac{\|g_{k+1}^Q\|^\frac{3}{2}}{(\varsigma+\sqrt{h_k})^\frac{3}{2}}.
\end{eqnarray}
\end{lemma}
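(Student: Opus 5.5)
The plan mirrors the standard HSODF/ARC argument. Combine the model-decrease bound of Lemma \ref{lemq} with the successful-step condition $\rho_k\ge\eta_1$. Then convert $\|p_k^Q\|$ into a lower bound in terms of $\|g_{k+1}^Q\|$, using the second-order expansion \eqref{eqsecondorder} together with the optimality condition \eqref{TKM1}.

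First, since iteration $k$ is successful and the hard case does not occur, Lemma \ref{lemq} gives
\[
w(x_k,\mu_k)-w(x_k+d_k,\mu_k)\ \ge\ \eta_1\bigl(w(x_k,\mu_k)-m(x_k,H_k,\mu_k,d_k)\bigr)\ \ge\ \eta_1\kappa_{\rm btm}\frac{\sqrt{h_k}}{6}\|p_k^Q\|^3 .
\]
It therefore suffices to prove
\[
\|g_{k+1}^Q\|\le(\varsigma+\sqrt{h_k})\|p_k^Q\|^2 ,
\]
with $\varsigma$ independent of $\epsilon$.

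Second, substitute $H_k^Qp_k^Q=-g_k^Q-\sqrt{h_k}\|p_k^Q\|p_k^Q$ from \eqref{TKM1} into \eqref{eqsecondorder}. This yields
\[
\|g_{k+1}^Q\|\le \sqrt{h_k}\|p_k^Q\|^2+\kappa_2\|d_k\|^2+\kappa_3\kappa_c\epsilon\|d_k\|.
\]
Use the near-feasibility assumption to write $\|d_k\|\le\|n_k\|+\|p_k^Q\|$ with $\|n_k\|\le\kappa_{\rm n}\kappa_c\epsilon$. What remains is to dominate every $\epsilon$-term by a multiple of $\|p_k^Q\|$.

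Third, and this is the main obstacle, we need a lower bound $\|p_k^Q\|\gtrsim\epsilon$. From \eqref{TKM1}, $\|g_k^Q\|\le(M_{\rm H}+\sqrt{h_k}\|p_k^Q\|)\|p_k^Q\|$. Combined with $\|g_k^Q\|\ge\kappa_g\epsilon/2$, this gives $\|p_k^Q\|\ge\kappa_g\epsilon/(2\kappa_{\rm pH})$, as in the proof of Lemma \ref{lemupper}. The difficulty is that $\kappa_{\rm pH}$ involves $\kappa_{\rm h}^{1/4}$, so the constant is $\epsilon$-independent only under the structural hypothesis $\kappa_{\rm h}=O(1)$ highlighted in the paper; I would invoke that hypothesis explicitly here. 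With it, $\epsilon\le c\|p_k^Q\|$ for a constant $c=2\kappa_{\rm pH}/\kappa_g$, and hence $\|d_k\|\le(1+\kappa_{\rm n}\kappa_c c)\|p_k^Q\|$.

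The cross term then obeys $\kappa_3\kappa_c\epsilon\|d_k\|\le\kappa_3\kappa_c c(1+\kappa_{\rm n}\kappa_c c)\|p_k^Q\|^2$. Collecting terms gives the target inequality with
\[
\varsigma:=\kappa_2(1+\kappa_{\rm n}\kappa_c c)^2+\kappa_3\kappa_c c(1+\kappa_{\rm n}\kappa_c c).
\]
(If one prefers to avoid the hypothesis on $\kappa_{\rm h}$, an alternative is to bound $\sqrt{h_k}\|p_k^Q\|$ via the acceptance interval, but I would try the route above first.)

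Finally, invert the target inequality to get $\|p_k^Q\|\ge\|g_{k+1}^Q\|^{1/2}/(\varsigma+\sqrt{h_k})^{1/2}$. Cubing and inserting into the first display gives \eqref{eqfunct}.
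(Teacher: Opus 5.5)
Your proposal is correct and follows the paper's proof essentially step for step. It uses the same lower bound $\|p_k^Q\|\ge\kappa_g\epsilon/(2\kappa_{\rm pH})$ from \eqref{TKM1}, the same bound $\|d_k\|\le\kappa_{\rm cp}\|p_k^Q\|$, the same constant $\varsigma=\kappa_2\kappa_{\rm cp}^2+2\kappa_3\kappa_c\kappa_{\rm cp}\kappa_{\rm pH}/\kappa_g$, and the same final insertion into \eqref{eqm} under $\rho_k\ge\eta_1$. Your explicit appeal to $\kappa_{\rm h}=O(1)$ is well placed: because $\kappa_{\rm pH}$ contains $\kappa_{\rm h}^{1/4}$, the paper's claim that $\varsigma$ is $\epsilon$-independent silently relies on the standing hypothesis of Remark \ref{rem:varphimin}, and with only the bound $\kappa_{\rm h}=\mathcal{O}(\epsilon^{-12})$ of Lemma \ref{lemupper}, $\varsigma$ would grow like a negative power of $\epsilon$.
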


\begin{proof}
From \eqref{TKM1} we have $\| g_k^Q\| \leq \|H_k^Q+\sqrt{h_k}\|p_k^Q\| I\| \cdot \|p_k^Q\|$. Using \eqref{hk40} and \eqref{equniformub} we have $\sqrt{h_k}\|p_k^Q\|\le3\max\{M_{\rm H},\sqrt{M_{gQ}}\}\kappa_{\rm h}^{1/4}$, so that
\begin{eqnarray}\label{eqpH}
\|p_k^Q\| \geq \frac{\| g_k^Q\|}{\kappa_{\rm {pH}}}\ge\frac{\kappa_g\epsilon}{2\kappa_{\rm {pH}}},
\qquad \kappa_{\rm {pH}}:=M_{\rm H}+3\max\{M_{\rm H},\sqrt{M_{gQ}}\}\,\kappa_{\rm h}^{1/4},
\end{eqnarray}
where the last inequality uses $\|g_k^Q\|\ge\kappa_g\epsilon/2$. From \eqref{TKM1} (equivalently \eqref{eqrel}) we also have
\begin{eqnarray}\label{eqrel}
        \| g_k^Q+H_k^Qp_k^Q\| = \sqrt{h_k}\|p_k^Q\|^2.
    \end{eqnarray}
Using the second-order bound \eqref{eqsecondorder} in the near-feasible phase and \eqref{eqrel}, we obtain
    \begin{eqnarray*}
        \|g_{k+1}^Q\| &\leq& \| g_{k+1}^Q-g_k^Q-H_k^Qp_k^Q\| +\| g_k^Q+H_k^Qp_k^Q\|\\
        &\leq& \kappa_2\|d_k\|^2+\kappa_3\kappa_c\epsilon\|d_k\| +\sqrt{h_k}\|p_k^Q\|^2.
    \end{eqnarray*}
By the triangle inequality (indeed $n_k$ and $p_k=Q_kp_k^Q$ are orthogonal, so that $\|d_k\|^2=\|n_k\|^2+\|p_k^Q\|^2$) we have $\|d_k\|\le\|n_k\|+\|p_k^Q\|$; combined with $\|n_k\|\le\kappa_{\rm n}\|c(x_k)\|\le\kappa_{\rm n}\kappa_c\epsilon$ (near-feasibility and Assumption $(G4)$) and \eqref{eqpH}, we obtain $\|n_k\|\le\frac{2\kappa_{\rm n}\kappa_c\kappa_{\rm {pH}}}{\kappa_g}\|p_k^Q\|$, and hence
    \begin{eqnarray*}
        \|d_k\|\le \left(1+\frac{2\kappa_{\rm n}\kappa_c\kappa_{\rm {pH}}}{\kappa_g}\right)\|p_k^Q\| =: \kappa_{\rm cp}\|p_k^Q\|,
    \end{eqnarray*}
with $\kappa_{\rm cp}=1+\frac{2\kappa_{\rm n}\kappa_c\kappa_{\rm {pH}}}{\kappa_g}$ independent of $\epsilon$. Therefore, using \eqref{eqpH} once more (so that $\epsilon\le\frac{2\kappa_{\rm {pH}}}{\kappa_g}\|p_k^Q\|$),
    \begin{eqnarray*}
        \|g_{k+1}^Q\| &\leq& \kappa_2\kappa_{\rm cp}^2\|p_k^Q\|^2+\kappa_3\kappa_c\kappa_{\rm cp}\epsilon\|p_k^Q\| +\sqrt{h_k}\|p_k^Q\|^2\\
        &\leq& \left(\kappa_2\kappa_{\rm cp}^2+\frac{2\kappa_3\kappa_c\kappa_{\rm cp}\kappa_{\rm {pH}}}{\kappa_g}+\sqrt{h_k}\right)\|p_k^Q\|^2
        = (\varsigma+\sqrt{h_k})\|p_k^Q\|^2,
    \end{eqnarray*}
    where $\varsigma:=\kappa_2\kappa_{\rm cp}^2+\frac{2\kappa_3\kappa_c\kappa_{\rm cp}\kappa_{\rm {pH}}}{\kappa_g}$ is independent of $\epsilon$. Hence $\|p_k^Q\|^2\ge\frac{\|g_{k+1}^Q\|}{\varsigma+\sqrt{h_k}}$, so that $\|p_k^Q\|^3\ge\frac{\|g_{k+1}^Q\|^\frac{3}{2}}{(\varsigma+\sqrt{h_k})^\frac{3}{2}}$. Plugging this into \eqref{eqm} we can get that
\begin{eqnarray*}
w(x_k,\mu_k)-m(x_k,H_k,\mu_k,d_k)
\geq \kappa_{\rm btm}\frac{\sqrt{h_k}}{6}\frac{\|g_{k+1}^Q\|^\frac{3}{2}}{(\varsigma+\sqrt{h_k})^\frac{3}{2}}.
\end{eqnarray*}
Provided that the $k$-th iteration is successful, we have \eqref{eqfunct}. The lemma is proved.
\qed
\end{proof}

\begin{lemma}
    Assume that the hard case does not occur at $x_k$, Assumptions $(G1)-(G4)$ hold, the $k$-th iteration is successful, $x_k$ is near-feasible in the sense that $\|c(x_k)\|\le\kappa_c\epsilon$, and the reduced gradient is not small, i.e.\ $\|g_k^Q\|\ge\kappa_g\epsilon/2$. Then there exists a constant $\varsigma_C>0$ such that
\begin{equation}\label{eqfunc}
    w(x_k,\mu_k)-w(x_k+d_k,\mu_k) \geq \varsigma_C\|g_{k+1}^Q\|^{3/2},
\end{equation}
where $\varsigma_C=\eta_1\kappa_{\rm btm}\frac{\sqrt{h_{\min}}}{6}\frac{1}{(\varsigma+\sqrt{h_{\min}})^{3/2}}$.
\end{lemma}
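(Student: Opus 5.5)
The plan is to start from Lemma~\ref{lemfo}, which under exactly the present hypotheses already gives
\begin{equation*}
w(x_k,\mu_k)-w(x_k+d_k,\mu_k)\ge \eta_1\kappa_{\rm btm}\frac{\sqrt{h_k}}{6}\frac{\|g_{k+1}^Q\|^{3/2}}{(\varsigma+\sqrt{h_k})^{3/2}},
\end{equation*}
and then to remove the dependence on $h_k$ by finding a lower bound for the scalar factor $\phi(s):=s/(\varsigma+s)^{3/2}$ with $s=\sqrt{h_k}$, uniform over the range of $h_k$ that the algorithm can produce. The lower end of that range is fixed by the algorithm itself. Every acceptance interval $I_h$ in Algorithm~\ref{alg:asoda} after a very successful step has left endpoint at least $\sqrt{h_{\min}}$. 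The paper also uses $h_k\ge h_{\min}$ (stated in the proof of Lemma~\ref{lems}). So I take $\sqrt{h_k}\ge\sqrt{h_{\min}}$ as given.

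Next I would study $\phi$ on $[\sqrt{h_{\min}},\infty)$. Its derivative is
\begin{equation*}
\phi'(s)=(\varsigma+s)^{-5/2}\bigl(\varsigma-\tfrac12 s\bigr),
\end{equation*}
so $\phi$ increases on $[0,2\varsigma]$ and decreases afterwards. Consequently $\phi(s)\ge\phi(\sqrt{h_{\min}})$ holds only for $\sqrt{h_{\min}}\le s\le s^\ast$, where $s^\ast\ge 2\varsigma$ solves $\phi(s^\ast)=\phi(\sqrt{h_{\min}})$. This is the main obstacle: the claimed constant $\varsigma_C$ is exactly $\eta_1\kappa_{\rm btm}\phi(\sqrt{h_{\min}})/6$, so the bound needs $\sqrt{h_k}$ to stay in that window. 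The upper side must come from Lemma~\ref{lemupper}, $h_k\le\kappa_{\rm h}$.

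For that upper side I would first try to use the regime the paper keeps visible, namely the structural hypothesis $\kappa_{\rm h}=O(1)$ from the abstract and Remark~\ref{rem:varphimin}. In that regime $\min\{\phi(\sqrt{h_{\min}}),\phi(\sqrt{\kappa_{\rm h}})\}$ is an $\epsilon$-independent constant, because a unimodal function attains its minimum over an interval at an endpoint. If $\sqrt{\kappa_{\rm h}}\le s^\ast$, for example when $\varsigma$ is large relative to $\sqrt{\kappa_{\rm h}}$ or when $\sqrt{h_{\min}}\ge 2\varsigma$ with the bisection tolerance $\sigma_b$ pinning $h_k$ near $h_{\min}$, then this minimum equals $\phi(\sqrt{h_{\min}})$, giving precisely the stated $\varsigma_C$.

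If this cannot be guaranteed, I would state the lemma with $\varsigma_C:=\frac{\eta_1\kappa_{\rm btm}}{6}\min\{\phi(\sqrt{h_{\min}}),\phi(\sqrt{\kappa_{\rm h}})\}$, which is still positive and independent of $k$. I would then remark that it coincides with the displayed constant when $\phi(\sqrt{\kappa_{\rm h}})\ge\phi(\sqrt{h_{\min}})$. Either way, the final step multiplies Lemma~\ref{lemfo}'s bound by this constant lower bound on $\phi(\sqrt{h_k})$, which yields \eqref{eqfunc}.
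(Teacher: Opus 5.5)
Your proposal is essentially the paper's own proof: you invoke Lemma~\ref{lemfo}, use that $s\mapsto s(\varsigma+s)^{-3/2}$ increases up to $s=2\varsigma$ (i.e.\ $h=4\varsigma^2$) and decreases afterwards, so its minimum over $[\sqrt{h_{\min}},\sqrt{\kappa_{\rm h}}]$ is attained at an endpoint, and you note that the displayed $\varsigma_C$ is valid only when that minimum sits at $h_{\min}$; the paper gives the sufficient condition $\kappa_{\rm h}\le4\varsigma^2$, your criterion $\phi(\sqrt{\kappa_{\rm h}})\ge\phi(\sqrt{h_{\min}})$ is the exact one, and both fall back on the endpoint minimum for the general case, as in Remark~\ref{rem:varphimin}. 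One aside is wrong, however: if $\sqrt{h_{\min}}\ge2\varsigma$ then $s^\ast=\sqrt{h_{\min}}$ and $\phi$ is strictly decreasing on the whole range, so the displayed constant fails for every $h_k>h_{\min}$, and ``$\sigma_b$ pinning $h_k$ near $h_{\min}$'' neither rescues it nor is something the algorithm enforces ($\sigma_b$ only governs membership in $I_h$).
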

\begin{proof}
    By Lemma \ref{lemfo} we have, for the near-feasible, gradient-dominated iterate $x_k$,
    \begin{equation*}
        w(x_k,\mu_k)-w(x_k+d_k,\mu_k)
        \geq \eta_1\kappa_{\rm btm}\frac{\sqrt{h_k}}{6}\frac{\|g_{k+1}^Q\|^{3/2}}{(\varsigma+\sqrt{h_k})^{3/2}}.
    \end{equation*}
    Since $h_k\in[h_{\min},\kappa_{\rm h}]$ with $h_{\min}>0$ and $\kappa_{\rm h}<+\infty$, the continuous function $\varphi(h):=\sqrt{h}\,(\varsigma+\sqrt{h})^{-3/2}$ attains a positive minimum $\varphi_{\min}>0$ on $[h_{\min},\kappa_{\rm h}]$. Replacing $\varphi(h_k)$ by $\varphi_{\min}$ and absorbing $\eta_1\kappa_{\rm btm}/6$ into $\varsigma_C$ gives \eqref{eqfunc}. Here we have taken the specific value $\varsigma_C=\eta_1\kappa_{\rm btm}\frac{\sqrt{h_{\min}}}{6}(\varsigma+\sqrt{h_{\min}})^{-3/2}$, which is a valid lower bound provided $\varphi$ is minimized at $h_{\min}$; since $\varphi$ is increasing on $[0,4\varsigma^2]$ and decreasing on $[4\varsigma^2,+\infty)$, this requires $\kappa_{\rm h}\le4\varsigma^2$. The general case, in which $\varphi_{\min}$ may instead be attained at $\kappa_{\rm h}$, is addressed in Remark \ref{rem:varphimin} below.
\qed
\end{proof}

\begin{remark}\label{rem:varphimin}
The preceding argument is valid as written only when $\varphi(h):=\sqrt{h}(\varsigma+\sqrt{h})^{-3/2}$ attains its minimum on $[h_{\min},\kappa_{\rm h}]$ at $h_{\min}$. Since $\varphi$ is increasing on $[0,4\varsigma^{2}]$ and decreasing on $[4\varsigma^{2},+\infty)$, the true minimum is $\varphi_{\min}=\min\{\varphi(h_{\min}),\varphi(\kappa_{\rm h})\}$. To keep the constant $\varsigma_C$ in \eqref{eqfunc} independent of $\epsilon$---which is what makes the subsequent complexity argument yield $O(\epsilon^{-3/2})$---one needs $\varphi(\kappa_{\rm h})=\Omega(1)$, equivalently $\kappa_{\rm h}=O(1)$ (or at most $\kappa_{\rm h}=O(\epsilon^{-q})$ with $q$ small enough that $\varphi(\kappa_{\rm h})=\Omega(1)$, together with an $\epsilon$-independent $\varsigma$). The uniform bound $\kappa_{\rm h}=O(1)$ therefore plays a structural role: it is not merely a logarithmic term in the complexity, but is needed to lower-bound both the gradient decrease \eqref{eqfunc} and the negative-curvature decrease \eqref{eqs} by $\Omega(\epsilon^{3/2})$. Establishing $\kappa_{\rm h}=O(1)$ rigorously requires a sharper model-error analysis in the near-feasible phase (the constraint-linearization term $\mu_k\|c(x_k+d_k)-c(x_k)-A_kd_k\|=\mathcal{O}(\mu_k\|d_k\|^2)$ must be dominated by the penalty-weighted normal-step decrease $\mu_k\Delta m_k^N$). We therefore state the $O(\epsilon^{-3/2})$ complexity bound under the explicit structural hypothesis $\kappa_{\rm h}=O(1)$; Subsection \ref{sec:hardcase} analyses the hard case but does not by itself establish this hypothesis, which is kept as a standing assumption throughout the complexity analysis. We keep this dependence explicit so that no constant is silently treated as $\epsilon$-independent.
\end{remark}

\begin{corollary}\label{corosucce}
     Assume that Assumptions $(G1)-(G4)$ hold and that $\kappa_{\rm h}$ is chosen so that the constants $\varsigma_C$ in \eqref{eqfunc} and $\kappa_{\rm f}$ below are independent of $\epsilon$ (see Remark \ref{rem:varphimin}). For any two successive successful iterates, without loss of generality, we denote them as $j$ and $j+1$, then exactly one of the following two cases will occur:\\
(1) if $x_{j+1}$ satisfies \eqref{eq0}, \eqref{eq1} and \eqref{eq2}, this means $x_{j+1}$ is a point satisfying Definition \ref{ss}.\\
(2) otherwise, $w(x_{j},\mu_j)-w(x_{j+2},\mu_j)\geq {\kappa_{\rm f}}\epsilon^{3/2}$.
\end{corollary}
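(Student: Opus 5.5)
The plan is a case analysis on which stopping criterion fails at $x_{j+1}$, combined with the monotonicity of $W$ (Lemma \ref{lem3.6}) and the decrease estimates already established. Case (1) is immediate from Definition \ref{ss}, so the work is case (2): at least one of \eqref{eq0}, \eqref{eq1}, \eqref{eq2} fails at $x_{j+1}$. Since both iterations $j$ and $j+1$ are successful, $\rho_j,\rho_{j+1}\ge\eta_1$. By Lemma \ref{lemq1} we may take $\mu_j=\mu_{j+1}=\mu_{\max}$ for $j\ge k_1$, so that
\[
w(x_j,\mu_j)-w(x_{j+2},\mu_j)=\bigl[w(x_j,\mu_j)-w(x_{j+1},\mu_j)\bigr]+\bigl[w(x_{j+1},\mu_{j+1})-w(x_{j+2},\mu_{j+1})\bigr],
\]
and both brackets are nonnegative. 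It therefore suffices to show that one of the two brackets is at least $\kappa_{\rm f}\epsilon^{3/2}$.

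\textbf{Infeasibility.} If \eqref{eq0} fails at $x_{j+1}$, I use the second bracket. By \eqref{deltam} and Lemma \ref{lem3.1}, together with the smallness condition of Lemma \ref{lemfeas}, we get $\Delta m_{j+1}\ge\nu\mu_0\kappa_c\epsilon$. Hence the second bracket is at least $\eta_1\nu\mu_0\kappa_c\epsilon\ge\eta_1\nu\mu_0\kappa_c\epsilon^{3/2}$ for $\epsilon\le1$.

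\textbf{Negative curvature.} If \eqref{eq0} holds but \eqref{eq2} fails, so that $\lambda_{\min}(H^Q_{j+1})<-\kappa_H\sqrt{\epsilon}$, I again use the second bracket. Lemma \ref{lems} applied at iteration $j+1$, with $h_{j+1}\le\kappa_{\rm h}$ from Lemma \ref{lemupper}, gives a decrease of at least $\eta_1\kappa_{\rm btm}\kappa_H^3\epsilon^{3/2}/(6\kappa_{\rm h})$. This is where the standing hypothesis $\kappa_{\rm h}=O(1)$ of Remark \ref{rem:varphimin} enters.

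\textbf{Large reduced gradient.} If \eqref{eq0} holds but \eqref{eq1} fails, so that $\|g^Q_{j+1}\|>\kappa_g\epsilon$, I use the first bracket via \eqref{eqfunc} applied at iteration $j$. This gives a decrease of at least $\varsigma_C\|g^Q_{j+1}\|^{3/2}\ge\varsigma_C\kappa_g^{3/2}\epsilon^{3/2}$.

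Applying \eqref{eqfunc} requires its two hypotheses at $x_j$: near-feasibility and $\|g_j^Q\|\ge\kappa_g\epsilon/2$. I expect this to be the main obstacle, and I would settle it by two subcases.
\begin{itemize}
\item If $\|c(x_j)\|>\kappa_c\epsilon$, the infeasibility argument above applied at iteration $j$ already gives a decrease of order $\epsilon$.
\item If $\|g_j^Q\|<\kappa_g\epsilon/2$, the bound \eqref{eqsecondorder} together with \eqref{eqrel} still controls $\|g^Q_{j+1}\|$ by $(\varsigma+\sqrt{h_j})\|p_j^Q\|^2$ plus terms of order $\|g_j^Q\|$. Since $\|g^Q_{j+1}\|>\kappa_g\epsilon$, this forces $\|p_j^Q\|^2\gtrsim\epsilon$, so Lemma \ref{lemq} again yields a decrease of order $\epsilon^{3/2}$. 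Alternatively, the Lipschitz estimate for $g^Q$ bounds $\|d_j\|\ge\kappa_g\epsilon/(2\kappa_{\rm GL})$, but that only gives a decrease of order $\epsilon^3\sqrt{h_{\min}}$, so I would rely on the $\|p_j^Q\|^2$ route.
\end{itemize}

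Finally, take $\kappa_{\rm f}$ to be the minimum of the constants from all cases. By Remark \ref{rem:varphimin} and the hypothesis on $\kappa_{\rm h}$, every one of these constants is independent of $\epsilon$, which completes case (2).
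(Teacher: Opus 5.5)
Your proposal has the same skeleton as the paper's proof. You split $w(x_j,\mu_j)-w(x_{j+2},\mu_j)$ into the decreases at iterations $j$ and $j+1$, bound the gradient branch via \eqref{eqfunc} at $j$ and the negative-curvature branch via \eqref{eqs} at $j+1$ with $h_{j+1}\le\kappa_{\rm h}$, and take $\kappa_{\rm f}$ as the minimum of the constants. Where you depart from the paper, you are more careful, and each addition closes a step the paper's proof passes over.

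First, you justify the splitting through $\mu_j=\mu_{j+1}=\mu_{\max}$ (Lemma \ref{lemq1}). The paper appeals only to $\mu_j\le\mu_{j+1}$, but the two monotonicity facts it cites push in opposite directions: the discrepancy is $(\mu_j-\mu_{j+1})(\|c(x_{j+1})\|-\|c(x_{j+2})\|)$, whose sign is not controlled. So your choice is actually needed. The restriction $j\ge k_1$ it entails costs nothing, because \eqref{eqfunc} and \eqref{eqs} rest on Lemma \ref{lemq2}, which holds only for $k\ge k_1$. Second, the paper negates case (1) as the failure of \eqref{eq1} or \eqref{eq2} only, tacitly relying on the near-feasibility restriction made after Lemma \ref{lemfeas}; your infeasibility branch makes this explicit. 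Third, the paper applies \eqref{eqfunc} at $x_j$ without checking near-feasibility of $x_j$ or $\|g_j^Q\|\ge\kappa_g\epsilon/2$, and your subcases address this.

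One correction in the subcase $\|g_j^Q\|<\kappa_g\epsilon/2$: the leftover terms are not of order $\|g_j^Q\|$. Moreover, the bound $\|g_{j+1}^Q\|\le(\varsigma+\sqrt{h_j})\|p_j^Q\|^2$ of Lemma \ref{lemfo} itself relies on $\epsilon\lesssim\|p_j^Q\|$, which is exactly the hypothesis that fails there. What \eqref{eqsecondorder}, \eqref{eqrel} and $\|d_j\|\le\kappa_{\rm n}\kappa_c\epsilon+\|p_j^Q\|$ actually give is $\kappa_g\epsilon<(2\kappa_2+\sqrt{\kappa_{\rm h}})\|p_j^Q\|^2+\mathcal{O}(\epsilon\|p_j^Q\|+\epsilon^2)$. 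For small $\epsilon$ this forces either $\|p_j^Q\|$ bounded below by a constant or $\|p_j^Q\|^2\gtrsim\epsilon$. Lemma \ref{lemq} then yields an $\Omega(\epsilon^{3/2})$ decrease, with an $\epsilon$-independent constant under $\kappa_{\rm h}=O(1)$. Since this estimate never uses $\|g_j^Q\|$, it covers the whole gradient branch for near-feasible $x_j$, so your split on $\|g_j^Q\|$ is unnecessary.
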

\begin{proof}
    Suppose that the first case does not hold; then we must have
    $$\|g_{j+1}^Q \| \geq \kappa_g \epsilon ~\text{or}~ \lambda_{\min} (H_{j+1}^Q) \leq -\kappa_H \sqrt{\epsilon}.$$
    Since both $j$ and $j+1$ are successful iterations, applying \eqref{eqfunc} to the $j$-th iteration yields
    \begin{eqnarray*}
    w(x_j,\mu_j)-w(x_{j+1},\mu_j)\geq \varsigma_C\|g_{j+1}^Q\|^\frac{3}{2}.
    \end{eqnarray*}
    Moreover, because $\mu_j\leq\mu_{j+1}$ and $\|c(x)\|\geq 0$, we have $w(x_{j+1},\mu_j)\leq w(x_{j+1},\mu_{j+1})$ and $w(x_{j+2},\mu_{j+1})\geq w(x_{j+2},\mu_j)$; applying \eqref{eqs} to the $(j+1)$-th iteration, it follows that
    \begin{eqnarray*}
    w(x_{j+1},\mu_j)-w(x_{j+2},\mu_j)&\geq& w(x_{j+1},\mu_{j+1})-w(x_{j+2},\mu_{j+1})\\
    &\geq& -\frac{\eta_1\kappa_{\rm btm} \lambda_{\min}(H_{j+1}^Q)^3}{6h_{j+1}}
    \geq -\frac{\eta_1\kappa_{\rm btm} \lambda_{\min}(H_{j+1}^Q)^3}{6 \kappa_{\rm h}},
    \end{eqnarray*}
    where the last inequality uses $h_{j+1}\leq\kappa_{\rm h}$ from Lemma \ref{lemupper} and $\lambda_{\min}(H_{j+1}^Q)^3\leq 0$ in the current case. If $\|g_{j+1}^Q\|\geq\kappa_g\epsilon$, then
    \begin{eqnarray*}
    w(x_j,\mu_j)-w(x_{j+2},\mu_j)\geq \varsigma_C\|g_{j+1}^Q\|^\frac{3}{2}\geq \varsigma_C\kappa_g^{3/2}\epsilon^{3/2}.
    \end{eqnarray*}
    If instead $\lambda_{\min}(H_{j+1}^Q)\leq-\kappa_H\sqrt{\epsilon}$, then $\left(\lambda_{\min}(H_{j+1}^Q)\right)^3\leq-\kappa_H^3\epsilon^{3/2}$, and the above two bounds give
    \begin{eqnarray*}
    w(x_j,\mu_j)-w(x_{j+2},\mu_j)\geq \frac{\eta_1\kappa_{\rm btm}\kappa_H^3}{6\kappa_{\rm h}}\epsilon^{3/2}.
    \end{eqnarray*}
    Combining the two cases, we obtain
    \begin{eqnarray*}
    w(x_j,\mu_j)-w(x_{j+2},\mu_j)\geq {\kappa_{\rm f}} \epsilon^{3/2},
    \end{eqnarray*}
    where ${\kappa_{\rm f}} = \min \left\{\varsigma_C\kappa_g^{3/2},\frac{\eta_1\kappa_{\rm btm}\kappa_H^3}{6{\kappa_{\rm h}}}\right\}$, thus the second statement holds.
The two constants entering ${\kappa_{\rm f}}$ are independent of $\epsilon$ precisely because of Remark \ref{rem:varphimin}: the gradient branch uses the $\epsilon$-independent $\varsigma_C$ (for $\kappa_{\rm h}=O(1)$), while the negative-curvature branch uses $\frac{\eta_1\kappa_{\rm btm}\kappa_H^3}{6\kappa_{\rm h}}$, which is $\epsilon$-independent because $h_{j+1}\le\kappa_{\rm h}=O(1)$ and $\lambda_{\min}(H_{j+1}^Q)^3\le-\kappa_H^3\epsilon^{3/2}$. This is the place where the bound $\kappa_{\rm h}=O(1)$ is indispensable; it is the structural hypothesis under which this corollary (and hence the global complexity bound) is stated, and it is the reason the negative-curvature step is analyzed separately in Subsection \ref{sec:hardcase}.
\qed
\end{proof}

Now, we are ready to provide the global iteration complexity analysis of the Algorithm \ref{alg:asoda}. We define the following sets: the index sets of successful iterations $\mathcal{S}_j$ and unsuccessful iterations $\mathcal{U}_j$ for the given iteration $j$, that is,
\begin{equation}
    \label{eqsucandunsucsetj}
    \mathcal{S}_j: = \{k\leq j: {\rho_k\geq \eta_1}\} \quad \text{and} \quad \mathcal{U}_j:= \{k\leq j: \rho_k < \eta_1\}.
\end{equation}

\begin{lemma}
    For any $j\geq 0$, let $\mathcal{S}_j$ and $\mathcal{U}_j$ be defined in \eqref{eqsucandunsucsetj}, then
    \begin{equation}\label{eqbound unsuc by suc}
        \vert \mathcal{U}_j \vert \leq \frac{1}{\log \gamma_2}\left[\frac{1}{2}\log \frac{{\kappa_{\rm h}}}{h_{0}}+\vert \mathcal{S}_j \vert \log \frac{1}{\gamma_4}\right].
    \end{equation}
\end{lemma}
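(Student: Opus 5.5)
Since $\rho_k$ decides how the interval $I_h$ is reset, I will follow the multiplicative evolution of $\sqrt{h_k}$ from one outer iteration to the next. This is the usual counting argument for adaptive cubic regularization (cf.\ \cite{Cartis2,he2025HSODF}). I take the bisection inner loop to return $\sqrt{h_{k+1}}$ inside the interval $I_h$ fixed at the end of iteration $k$, ignoring the tolerance $\sigma_b$ (or absorbing it into the constants $\gamma_i$).

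The update rules of Algorithm~\ref{alg:asoda} then give the following. If iteration $k$ is unsuccessful, $I_h=[\gamma_2\sqrt{h_k},\gamma_3\sqrt{h_k}]$, so $\sqrt{h_{k+1}}\ge\gamma_2\sqrt{h_k}$. If it is very successful, $\sqrt{h_{k+1}}\ge\max\{\sqrt{h_{\min}},\gamma_4\sqrt{h_k}\}\ge\gamma_4\sqrt{h_k}$. If it is successful, $\sqrt{h_{k+1}}\ge\sqrt{h_k}/\gamma_1$. To get the stated constant, which contains only $\gamma_4$, I would use a common lower factor $\gamma_4$ for every $k\in\mathcal S_j$. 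This requires $1/\gamma_1\ge\gamma_4$, or else reading the successful update as giving at worst the very-successful decrease. I expect this point to be the main obstacle, since the statement as written implicitly assumes $\gamma_4\le1/\gamma_1$ (or that successful iterations do not shrink $h$ below $\gamma_4$ times its previous value). I would state this explicitly and, if needed, replace $\gamma_4$ by $\min\{\gamma_4,1/\gamma_1\}$.

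With these one-step bounds I multiply over $k=0,\dots,j$:
\[
\sqrt{h_{j+1}}\;\ge\;\sqrt{h_0}\,\gamma_2^{|\mathcal U_j|}\,\gamma_4^{|\mathcal S_j|}.
\]
Lemma~\ref{lemupper} gives $h_{j+1}\le\kappa_{\rm h}$, so $\sqrt{\kappa_{\rm h}/h_0}\ge\gamma_2^{|\mathcal U_j|}\gamma_4^{|\mathcal S_j|}$. Taking logarithms gives
\[
|\mathcal U_j|\log\gamma_2\le\tfrac12\log\frac{\kappa_{\rm h}}{h_0}+|\mathcal S_j|\log\frac1{\gamma_4}.
\]
Dividing by $\log\gamma_2>0$, which holds because $\gamma_2>1$, yields \eqref{eqbound unsuc by suc}. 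Here $\log(1/\gamma_4)\ge0$ because $0<\gamma_4\le1$.

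Two details need care. First, the lower endpoint of $I_h$ must be a guaranteed lower bound for the next $\sqrt{h}$. Because the inner loop terminates only when $\sqrt{h_k}\in I_h$, this holds. Second, the hard-case branch must be consistent with the counting. Algorithm~\ref{alghardcase} uses $I_h=[\gamma_2\sqrt{h_{k,i-1}},\gamma_3\sqrt{h_{k,i-1}}]$, so each failed hard-case attempt also multiplies $\sqrt h$ by at least $\gamma_2$. Counting these attempts as unsuccessful iterations therefore preserves the inequality. Since the complexity section ignores the hard case, I would mention this only briefly.
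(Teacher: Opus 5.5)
Your argument is the same as the paper's: bound $\sqrt{h_{k+1}}$ below by $\gamma_4\sqrt{h_k}$ on $\mathcal{S}_j$ and by $\gamma_2\sqrt{h_k}$ on $\mathcal{U}_j$, multiply these bounds starting from $h_0$, cap the result by $\kappa_{\rm h}$ via Lemma~\ref{lemupper}, and take logarithms. Your caveat about merely successful iterations is correct, and the paper glosses over it: the paper asserts $\gamma_4\sqrt{h_k}\le\sqrt{h_{k+1}}$ for all $k\in\mathcal{S}_j$ ``from the mechanism of the algorithm,'' but for merely successful iterations the lower endpoint is $\sqrt{h_k}/\gamma_1$, and Algorithm~\ref{alg:asoda} imposes no condition $\gamma_1\gamma_4\le1$, so your repair (assume this condition, or replace $\gamma_4$ by $\min\{\gamma_4,1/\gamma_1\}$) is exactly what is needed.
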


\begin{proof}
    First from the mechanism of the algorithm, we have
    \begin{equation}\label{eqrelationsuc}
        \gamma_4\sqrt{h_k} \leq \sqrt{h_{k+1}}, \quad k\in \mathcal{S}_j
    \end{equation}
    and
    \begin{equation}\label{eqrelationunc}
        \gamma_2\sqrt{h_k}\leq \sqrt{h_{k+1}},\quad k\in \mathcal{U}_j.
    \end{equation}
    There exists an upper bound for $h_k$ from the Lemma \ref{lemupper}, thus combining the above two inequalities we have that
    \begin{equation}\label{eqdeduce}
        \sqrt{h_{0}}\gamma_4^{\vert\mathcal{S}_j\vert}\gamma_2^{\vert\mathcal{U}_j\vert}\leq \sqrt{{\kappa_{\rm h}}},
    \end{equation}
    which is equivalent to
    \begin{equation}
        \label{eqdeducelog}
        \vert \mathcal{S}_j \vert \log \gamma_4+ \vert \mathcal{U}_j \vert \log \gamma_2 \leq \frac{1}{2}\log \frac{{\kappa_{\rm h}}}{h_{0}}.
    \end{equation}
    By rearranging terms in \eqref{eqdeducelog}, we proved this result.
\qed
\end{proof}

Next, we assume that $j_f$ is the first index such that the following holds,
 $$\| g_{j_f+1}^Q\| \leq \kappa_g \epsilon, \ \lambda_{\min}(H_{j_f+1}^Q)\geq -\kappa_H \sqrt{\epsilon}.$$
That is, the next iterate is already an $\epsilon$-approximated second-order stationary point,
we get an upper bound for the cardinality of the set $\vert \mathcal{S}_{j_f} \vert$ as follows.

\begin{lemma}
    The cardinality of the set $\mathcal{S}_{j_f}$ satisfies
    \begin{equation}
        \label{eqcard of suc}
        \vert \mathcal{S}_{j_f} \vert \leq O(\epsilon^{-3/2}).
    \end{equation}
\end{lemma}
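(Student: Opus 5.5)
The plan is a telescoping argument on the auxiliary merit function $W$, using Corollary \ref{corosucce} together with Lemma \ref{lemfeas} to separate the infeasible phase. Enumerate the successful iterations up to $j_f$ as $k_0<k_1<\dots<k_{s-1}$, where $s=\vert\mathcal{S}_{j_f}\vert$.

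\textbf{Step 1: remove infeasible iterations.} By Lemma \ref{lemfeas}, at most $\mathcal{O}(\epsilon^{-1})$ of the $k_r$ satisfy $\|c(x_{k_r})\|>\kappa_c\epsilon$. These iterations can simply be counted and discarded, since $\mathcal{O}(\epsilon^{-1})\le\mathcal{O}(\epsilon^{-3/2})$ for $\epsilon<1$. The remaining near-feasible successful iterations are grouped into consecutive pairs $(k_r,k_{r+1})$.

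\textbf{Step 2: decrease per pair.} Since the iterates do not move between consecutive successful iterations, $x_{k_r+1}=x_{k_{r+1}}$. Also, $j_f$ is the first index whose successor is $\epsilon$-stationary. Hence for every pair with $k_{r+1}<j_f$, case (1) of Corollary \ref{corosucce} fails at $x_{k_r+1}$, so case (2) holds:
\[
w(x_{k_r},\mu_{k_r})-w(x_{k_{r+1}+1},\mu_{k_r})\ge\kappa_{\rm f}\epsilon^{3/2}.
\]
Divide by $\mu_{k_r}\le\mu_{\rm max}$ (Lemma \ref{lemq1}). Then use the monotonicity in $\mu$ from the proof of Lemma \ref{lem3.6}, namely that $W(x,\mu)$ is nonincreasing in $\mu$ because $f-f_{\min}\ge0$. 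This gives
\[
W(x_{k_{r+2}},\mu_{k_{r+2}})\le W(x_{k_r},\mu_{k_r})-\frac{\kappa_{\rm f}}{\mu_{\rm max}}\epsilon^{3/2}.
\]
When the infeasible iterations removed in Step 1 separate two near-feasible ones, Lemma \ref{lem3.6} shows that $W$ does not increase across them. The chain therefore stays monotone.

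\textbf{Step 3: telescope.} Summing over the roughly $s/2-\mathcal{O}(\epsilon^{-1})$ disjoint pairs and using $W\ge0$ from \eqref{varphi1} gives
\[
\Bigl(\tfrac{s}{2}-\mathcal{O}(\epsilon^{-1})-1\Bigr)\frac{\kappa_{\rm f}}{\mu_{\rm max}}\epsilon^{3/2}\le W(x_0,\mu_0).
\]
Therefore
\[
s\le \frac{2\mu_{\rm max}W(x_0,\mu_0)}{\kappa_{\rm f}}\epsilon^{-3/2}+\mathcal{O}(\epsilon^{-1})+2=\mathcal{O}(\epsilon^{-3/2}).
\]

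\textbf{Main obstacle.} The delicate point is the $\epsilon$-independence of $\kappa_{\rm f}$ and $\mu_{\rm max}$. The value of $\kappa_{\rm f}$ rests on $\varsigma_C$ and on $1/\kappa_{\rm h}$, so I will invoke the standing hypothesis $\kappa_{\rm h}=O(1)$ from Remark \ref{rem:varphimin}, exactly as Corollary \ref{corosucce} does. For $\mu_{\rm max}$, Lemma \ref{lemq1} gives a bound in terms of $\kappa_{\rm mk}$, $\tau_1$, $\tau_2$ and $\mu_0$ only, which I will check is independent of $\epsilon$.

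A secondary issue is that the index $k_1$ in Lemma \ref{lemq1} is only "sufficiently large". The finitely many successful iterations before $k_1$ do not depend on $\epsilon$, since each penalty increase is at least $\max\{(\tau_1-1)\mu_0,\tau_2\}$. They add only an $O(1)$ term to the bound.
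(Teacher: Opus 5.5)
Your argument is correct. It uses the same skeleton as the paper: pair consecutive successful iterations, apply case (2) of Corollary \ref{corosucce} to each disjoint pair, and telescope against a lower bound. It differs in two ways, and both make it more rigorous than the paper's write-up.

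\emph{Telescoping $W$ instead of $w$.} The paper telescopes $w$ and justifies the junctions by $w(x_{k^s_{j+2}},\mu_{k^s_j})\le w(x_{k^s_{j+2}},\mu_{k^s_{j+2}})$. To bound the sum above by a telescoped difference you need the reverse inequality. Since $w$ is nondecreasing in $\mu$, the paper's chain only closes once $\mu$ has stopped changing, or after adding an Abel-summation correction of at most $(\mu_{\rm max}-\mu_0)\max_k\|c(x_k)\|$. Your $W=(w-f_{\min})/\mu$ is nonincreasing in $\mu$ because $f(x_k)\ge f_{\min}$, so the junctions have the right sign, exactly as in Lemma \ref{lem3.6}. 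The price is the factor $\mu_{\rm max}$ in the constant. This is harmless, since the threshold $\kappa_{\rm mk}/(1-\nu)$ in Lemma \ref{lemq1} depends only on problem bounds and algorithmic constants.

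\emph{Removing infeasible iterations first.} You strip off the infeasible successful iterations with Lemma \ref{lemfeas} before pairing. The paper applies Corollary \ref{corosucce} to every pair. Yet the corollary's gradient branch rests on \eqref{eqfunc}, which is proved only for near-feasible iterates, and its proof reads failure of case (1) as failure of \eqref{eq1} or \eqref{eq2}, ignoring \eqref{eq0}. Requiring both members of each pair to be near-feasible is what makes the appeal legitimate. It costs only an additive $O(\epsilon^{-1})$.

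\emph{The pre-$k_1$ remark.} One justification in your last paragraph does not work as stated. A bounded number of penalty increases does not bound how many iterations elapse before the last one. So it does not show that the successful iterations before $k_1$ number $O(1)$. The remark is unnecessary, however:
\begin{itemize}
\item $\mu_k\le\mu_{\rm max}$ for every $k$, by monotonicity of $\mu_k$.
\item \eqref{mkt1} holds for every $k$, because its derivation uses only \eqref{kbon1}, which Lemma \ref{lem3.7} gives for all $k$.
\item Combining \eqref{mkt1} with \eqref{deltam} and $\mu_k\ge\mu_0$ gives $\Delta m_k\ge\Delta m^B_k-\kappa_{\rm mk}\Delta m^N_k\ge\Delta m^B_k-\frac{\kappa_{\rm mk}}{\nu\mu_0}\Delta m_k$.
\item Hence $\Delta m_k\ge\frac{\nu\mu_0}{\nu\mu_0+\kappa_{\rm mk}}\Delta m^B_k$ for all $k$.
\end{itemize}
So $\kappa_{\rm btm}$, and therefore $\kappa_{\rm f}$, is valid from the first iteration, and there is no pre-$k_1$ block to count.
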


\begin{proof}
    First, we denote $k^s_{j}$ as the $j$-th element in $\mathcal{S}_{j_f}$. That is 
    \begin{equation*}
    \mathcal{S}_{j_f} := \{ k^s_{1},k^s_{2},\ldots,k^s_{\vert \mathcal{S}_{j_f}\vert} \}.
    \end{equation*}
    Since $x_{k^s_{j+1}}$ is not yet an $\epsilon$-approximate second-order stationary point for any $k^s_{j+1}$ with $j+1<j_f$, Corollary \ref{corosucce} (case (2)) gives
    \begin{equation}\label{eqsuc dec ult}
        w(x_{k^s_j},\mu_{k^s_j})-w(x_{k^s_{j+2}},\mu_{k^s_j})\geq {\kappa_{\rm f}} \epsilon^{3/2}
        \qquad \text{for}\quad j=1,2,\ldots,|\mathcal{S}_{j_f}|-2.
    \end{equation}
    Let $\mathcal{J}:=\left\{j\in\{1,\ldots,|\mathcal{S}_{j_f}|-2\}: j \text{ odd}\right\}$, so that $|\mathcal{J}|=\lceil (|\mathcal{S}_{j_f}|-2)/2\rceil \ge |\mathcal{S}_{j_f}|/2-1$. Summing \eqref{eqsuc dec ult} over $j\in\mathcal{J}$ and telescoping (using $\mu_{k^s_j}\le\mu_{k^s_{j+2}}$ so that $w(x_{k^s_{j+2}},\mu_{k^s_j})\le w(x_{k^s_{j+2}},\mu_{k^s_{j+2}})$), we obtain
    \begin{align}
        \notag
        w(x_0,\mu_0)-w(x_{j_f+1},\mu_0) & \geq \sum_{j\in\mathcal{J}} \left[w(x_{k^s_j},\mu_{k^s_j})-w(x_{k^s_{j+2}},\mu_{k^s_j})\right]                                \\
                            & \geq \left(\frac{\vert \mathcal{S}_{j_f}\vert}{2}-1\right){\kappa_{\rm f}} \epsilon^{3/2}.        \label{eqtelscop}
    \end{align}
    From \eqref{varphi1} we have that 
    \begin{eqnarray*}
 w(x_{j_f+1},\mu_0)\geq f_{\text{min}}.
\end{eqnarray*}
    
    Note that ${\kappa_f}>0$ is a constant depending on the problem parameters such as $M$, thus we conclude that $$\vert \mathcal{S}_{j_f}\vert \leq \frac{2\,(w(x_0,\mu_0)-f_{\text{min}})}{{\kappa_{\rm f}}}\epsilon^{-3/2}+2 = O(\epsilon^{-3/2}).$$
    We complete the proof.
\qed
\end{proof}

\begin{theorem}\label{thm.global}
    Assume that the structural hypothesis $\kappa_{\rm h}=O(1)$ of Remark \ref{rem:varphimin} holds. Then the adaptive Algorithm (ASODA) takes $O\left(\epsilon^{-3/2}\right)$ iterations to achieve a point $x_k$ satisfying $\|c(x_k)\|\le\kappa_c\epsilon$, $\|g_k^Q\| \leq \kappa_g \epsilon$ and $\lambda_{\min}(H_k^Q) \geq -\kappa_H \sqrt{\epsilon}$.
\end{theorem}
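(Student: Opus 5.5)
The plan is to count the iterations up to termination by splitting them into successful and unsuccessful iterations, and to bound each class using the lemmas already established.

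Fix $\epsilon\in(0,1)$ small enough that the hypothesis of Lemma \ref{lemfeas} holds. Under $\kappa_{\rm h}=O(1)$ this condition, $\beta/(\kappa_{\rm n}\kappa_{\rm h}^{1/4})\ge\kappa_c\epsilon$, only requires $\epsilon$ to lie below an $\epsilon$-independent threshold. Let $K$ be the first index at which all three stopping tests of Algorithm \ref{alg:asoda} hold, and partition $\{0,\dots,K-1\}=\mathcal{S}_{K-1}\cup\mathcal{U}_{K-1}$. Lemma \ref{lemma.optimal} and Algorithm \ref{alghardcase} take care of hard-case attempts. Each such attempt either ends in an accepted step ($\rho\ge\eta_1$), which we count as a successful iteration, or raises $h$ by at least the factor $\gamma_2^2$. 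So we can treat it exactly like an ordinary iteration in the bookkeeping below.

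\textbf{Step 1 (successful iterations).} Split $\mathcal{S}_{K-1}$ into infeasible and near-feasible successful iterations.
\begin{itemize}
\item By Lemma \ref{lemfeas}, the infeasible successful iterations, those with $\|c(x_k)\|>\kappa_c\epsilon$, number at most $O(\epsilon^{-1})\le O(\epsilon^{-3/2})$.
\item Now list the near-feasible successful iterations in order, $k^s_1<k^s_2<\cdots$. Their next iterate is not yet an $\epsilon$-approximate second-order stationary point, so either the reduced-gradient test or the curvature test fails. Apply Corollary \ref{corosucce}, case (2), to each consecutive pair. Its constant $\kappa_{\rm f}=\min\{\varsigma_C\kappa_g^{3/2},\eta_1\kappa_{\rm btm}\kappa_H^3/(6\kappa_{\rm h})\}$ is $\epsilon$-independent precisely because $\kappa_{\rm h}=O(1)$, by Remark \ref{rem:varphimin}.
\item Telescope over every other index as in the lemma giving \eqref{eqcard of suc}. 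Use monotonicity of $w$ in $\mu$ (since $\mu_k$ is nondecreasing) and $W\ge0$ from Lemma \ref{lem3.6} to bound the total decrease by $w(x_0,\mu_0)-f_{\min}$, after rescaling with $\mu_{\max}$ from Lemma \ref{lemq1}. This gives $|\mathcal{S}_{K-1}|=O(\epsilon^{-3/2})$.
\end{itemize}

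\textbf{Step 2 (unsuccessful iterations).} Invoke \eqref{eqbound unsuc by suc}, which gives
\[
|\mathcal{U}_{K-1}|\le \frac{1}{\log\gamma_2}\Bigl[\tfrac12\log\tfrac{\kappa_{\rm h}}{h_0}+|\mathcal{S}_{K-1}|\log\tfrac1{\gamma_4}\Bigr].
\]
Since $\kappa_{\rm h}=O(1)$, the logarithmic term is a constant, so $|\mathcal{U}_{K-1}|=O(\epsilon^{-3/2})$. Adding Steps 1 and 2 gives $K=O(\epsilon^{-3/2})$, and at index $K$ all three conditions of the theorem hold by the stopping rule.

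\textbf{Main obstacle.} The delicate step is gluing the near-feasible analysis to the infeasible phase. Corollary \ref{corosucce} and Lemma \ref{lemfo} assume near-feasibility and $\|g_k^Q\|\ge\kappa_g\epsilon/2$ at $x_k$, but successive successful iterates may alternate between the two phases. I would handle this as follows:
\begin{itemize}
\item Discard every pair in which either member is infeasible. By Lemma \ref{lemfeas} at most $O(\epsilon^{-1})$ successful iterations are lost this way.
\item For the remaining near-feasible pairs where $\|g_k^Q\|<\kappa_g\epsilon/2$ but $\|g_{k+1}^Q\|\ge\kappa_g\epsilon$, use the Lipschitz bound $\|g^Q_{k+1}-g^Q_k\|\le\kappa_{\rm GL}\|d_k\|$. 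This forces $\|d_k\|\gtrsim\epsilon$, and since $h_k\le\kappa_{\rm h}$ the step then yields a decrease $\Omega(\epsilon^{3/2})$ through Lemma \ref{lemq}.
\end{itemize}
If this patch fails, I would simply add that case to the hypotheses inherited from Remark \ref{rem:varphimin}.
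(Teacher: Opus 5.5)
Your architecture matches the paper's proof of Theorem \ref{thm.global}, which simply adds the bound \eqref{eqcard of suc} on successful iterations (Corollary \ref{corosucce} plus telescoping) to the counting bound \eqref{eqbound unsuc by suc}, with $\kappa_{\rm h}=O(1)$ making every constant $\epsilon$-free. Your extra bookkeeping is a sound tightening of what the paper leaves implicit. Discarding infeasible successful iterations via Lemma \ref{lemfeas} is fine, and so is telescoping through $W$ with $\mu_k\le\mu_{\max}$ rather than through $w$ directly, which is the clean way to telescope with a nondecreasing penalty. You are also right that Corollary \ref{corosucce} applies \eqref{eqfunc} at $x_j$ without checking near-feasibility or $\|g_j^Q\|\ge\kappa_g\epsilon/2$. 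The paper's proof does not address this.

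However, your patch for the transition case fails, so Step 1 has a gap as written. The Lipschitz bound only gives $\|d_k\|\ge\kappa_g\epsilon/(2\kappa_{\rm GL})$, which is linear in the step. Lemma \ref{lemq} measures decrease through $\frac{\sqrt{h_k}}{6}\|p_k^Q\|^3$, so a step of length $\Theta(\epsilon)$ yields only $\Omega(\sqrt{h_{\min}}\,\epsilon^3)$. The upper bound $h_k\le\kappa_{\rm h}$ cannot raise this to $\epsilon^{3/2}$. Moreover, $\|d_k\|\gtrsim\epsilon$ does not even give $\|p_k^Q\|\gtrsim\epsilon$, because $\|n_k\|$ may itself be as large as $\kappa_{\rm n}\kappa_c\epsilon$. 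Such iterations would therefore be counted only as $O(\epsilon^{-3})$, and your fallback of moving the case into the hypotheses would change the theorem.

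The missing idea is to use the quadratic relation instead of the linear one. By \eqref{eqsecondorder} and \eqref{eqrel}, with no assumption on $\|g_k^Q\|$,
\[
\|g_{k+1}^Q\|\le\kappa_2\|d_k\|^2+\kappa_3\kappa_c\epsilon\|d_k\|+\sqrt{h_k}\|p_k^Q\|^2 .
\]
The bound $\|d_k\|\le\kappa_{\rm n}\kappa_c\epsilon+\|p_k^Q\|$ turns every normal-step contribution into an $O(\epsilon^2)$ term. So for $\epsilon$ below an $\epsilon$-independent threshold, $\|g_{k+1}^Q\|\ge\kappa_g\epsilon$ yields
\[
\tfrac12\kappa_g\epsilon\le\kappa_3\kappa_c\epsilon\|p_k^Q\|+\bigl(2\kappa_2+\sqrt{\kappa_{\rm h}}\bigr)\|p_k^Q\|^2 .
\]
Hence $\|p_k^Q\|$ is bounded below either by a constant or by $c\sqrt{\epsilon}$, with $c$ depending only on $\kappa_{\rm h}$ and problem constants. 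Lemma \ref{lemq} with $h_k\ge h_{\min}$ then gives a decrease of order $\epsilon^{3/2}$. This removes the hypothesis $\|g_k^Q\|\ge\kappa_g\epsilon/2$ from Lemma \ref{lemfo}, so no transition case needs separate treatment.
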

\begin{proof}
    Note that $x_{j_f+1}$ is the point satisfying Definition \ref{ss}. The total iteration number is the sum of $\vert \mathcal{S}_{j_f} \vert$ and $\vert \mathcal{U}_{j_f} \vert$. The results are a simple combination of \eqref{eqcard of suc} and \eqref{eqbound unsuc by suc}, both of which hold under the stated hypothesis $\kappa_{\rm h}=O(1)$.
\qed
\end{proof}

Apart from the number of iterations consumed by the algorithm, we are also interested in the total number of GHMs solved by the algorithm. We may use a bisection procedure since $h_k$ is continuous in most cases. In particular, locating $h_k$ brings computations in solving extra GHMs in the order of $O(\log(\epsilon^{-1}))$. Interested readers may refer to Appendix D of Ref. \cite{he2025HSODF}. We will not elaborate on this here.

\begin{theorem}\label{thmcomplexity bisection procedure}
    At an iterate $x_k$ of the Algorithm, the number of iterations of the bisection procedure is
    \begin{equation}\label{eqcomplexity of bisection}
        O\left(\log \left(\frac{{\kappa_{\rm h}} U_{\rm \phi} U_{\rm H}}{h_{\min}\sigma_b}\right)\right),
    \end{equation}
    where $U_{\phi}$ and $U_{\rm H}$ denote uniform upper bounds on the reduced gradient and the reduced Hessian, respectively.
    If we set the inexactness of the bisection procedure $\sigma_b=\epsilon$, then the iteration bound becomes
    \begin{equation}
        \label{eqcomplexity of bisection 2}
        O\left(\log \left(\frac{{\kappa_{\rm h}} U_{\rm \phi} U_{\rm H}}{h_{\min}}\epsilon^{-1}\right)\right).
    \end{equation}
\end{theorem}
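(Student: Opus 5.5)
The bisection bound will follow the argument of Appendix D of \cite{he2025HSODF}, applied to the reduced data $(H_k^Q,g_k^Q)$. The idea is to view the inner loop of Algorithm \ref{alg:asoda} as bisection on a one-dimensional bracket of $\delta$. The basic object is the map $\delta\mapsto\sqrt{h_k(\delta)}=\theta_k(\delta)/\|p_k^Q(\delta)\|$. Outside the hard case, Lemma \ref{lemma.optimal} and \eqref{eq.homoeig.foc t neq 0} give the relations $(H_k^Q+\theta I)p_k^Q=-g_k^Q$ and $\delta=-\theta-(g_k^Q)^Tp_k^Q$.

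\textbf{Step 1 (monotonicity and continuity).} Parametrize by $\theta>-\lambda_{\min}(H_k^Q)$. Then $p(\theta)=-(H_k^Q+\theta I)^{-1}g_k^Q$, and $\|p(\theta)\|$ decreases strictly in $\theta$. Hence $\theta\mapsto\theta/\|p(\theta)\|$ is strictly increasing. Next compute
\[
\frac{d\delta}{d\theta}=-1-\|(H_k^Q+\theta I)^{-1}g_k^Q\|^2<0 .
\]
So $\delta\leftrightarrow\theta$ is a decreasing bijection, and $\sqrt{h_k}$ is a continuous monotone function of $\delta$ (decreasing). This justifies bisection on $\delta$.

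\textbf{Step 2 (an initial bracket of logarithmic length).} The target interval $I_h$ lies inside $[\sqrt{h_{\min}},\gamma_3\sqrt{\kappa_{\rm h}}]$ by Lemma \ref{lemupper} and the update rules. I would convert these endpoints into $\delta$-values via $\delta=-\theta-(g_k^Q)^Tp$, using $\|p\|\le U_\phi/(\theta+\lambda_{\min})$ and $|\lambda_{\min}|\le U_{\rm H}$. The corresponding $\theta$-values satisfy $\theta=\sqrt{h}\,\|p\|$, and $\|p\|$ is bounded by $U_\phi/\sqrt{h_{\min}}$-type quantities. This yields a bracket $[\delta_{\rm lo},\delta_{\rm hi}]$ of width at most a polynomial in $\kappa_{\rm h},U_\phi,U_{\rm H},h_{\min}^{-1}$.

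\textbf{Step 3 (resolution needed).} Bisection may stop once $\sqrt{h_k(\delta)}$ is within $\sigma_b$ of $I_h$. It therefore suffices to lower-bound $|d\sqrt{h}/d\delta|$, i.e.\ to give a Lipschitz-type bound on the inverse map, on the bracket. On the bracket I would bound the derivative of $\theta/\|p(\theta)\|$ from above by a polynomial in the same quantities. Combining this with $|d\delta/d\theta|\ge1$ shows that a $\delta$-interval of width $\sigma_b/\mathrm{poly}(\kappa_{\rm h}U_\phi U_{\rm H}/h_{\min})$ maps into a $\sigma_b$-window. The number of halvings is then the log of (bracket width)/(required width), which gives \eqref{eqcomplexity of bisection}. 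Setting $\sigma_b=\epsilon$ gives \eqref{eqcomplexity of bisection 2}.

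\textbf{Main obstacle.} The hardest step is Step 3 near $\theta\to-\lambda_{\min}(H_k^Q)$, when $g_k^Q$ is nearly orthogonal to $S_1(H_k^Q)$. There $\|p(\theta)\|$ can blow up, and the map degenerates toward the hard case. I would first restrict the bracket to $\theta\ge\sqrt{h_{\min}}\,\|p\|$, where the blow-up is excluded because $\sqrt h$ is bounded below. The genuinely degenerate situation is deferred to Algorithm \ref{alghardcase}, where the perturbed gradient $g_{k,i}^Q$ supplies the $\lambda_1^2$-sized component that keeps the constants polynomial.
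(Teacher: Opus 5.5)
The paper gives no proof of this theorem; it only points to Appendix D of \cite{he2025HSODF}. Your sketch therefore has to stand on its own.

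Steps 1--2 are essentially fine. You should restrict to $\theta>\max\{0,-\lambda_{\min}(H_k^Q)\}$. The bracket is polynomial not because of $\|p\|\le U_\phi/(\theta+\lambda_{\min})$, but because $\sqrt h\ge\sqrt{h_{\min}}$ bounds $\|p\|$ via Lemma \ref{lem3.3}, whence $|\delta|\le\theta+U_\phi\|p\|$.

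The gap is Step 3. First, a direction slip: you need an \emph{upper} bound on $|d\sqrt h/d\delta|$, so that the set of acceptable $\delta$ is wide, not a lower bound. More seriously, no such upper bound polynomial in $\kappa_{\rm h},U_\phi,U_{\rm H},h_{\min}^{-1}$ exists. From your own formulas,
\[
\Bigl|\frac{d\sqrt h}{d\delta}\Bigr|=\frac{1}{1+\|p\|^2}\Bigl(\frac{1}{\|p\|}+\frac{\theta\,p^T(H_k^Q+\theta I)^{-1}p}{\|p\|^3}\Bigr).
\]
Let $\lambda_1=\lambda_{\min}(H_k^Q)<0$ and let $u_1$ be a unit vector in $\mathcal S_1(H_k^Q)$. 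When $\theta+\lambda_1$ is of the order of $|(g_k^Q)^Tu_1|$, one has $\|p\|=O(1)$, yet the second term is of order $|\lambda_1|/|(g_k^Q)^Tu_1|$. The $1/\|p\|$ term likewise blows up as $\|g_k^Q\|\to0$ with $H_k^Q\succ0$.

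Your restriction $\theta\ge\sqrt{h_{\min}}\|p\|$ only bounds $\|p\|$ from above, so it does not exclude this regime. Nor is the regime ``deferred'' to Algorithm \ref{alghardcase}, which is entered only when $t_{k,j}=0$ exactly.

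Concretely, take $H_k^Q=\mathrm{diag}(-1,1)$ and $g_k^Q=(\eta,1)^T$ with $0<\eta\ll1$. As $\theta$ decreases from $1+10\eta$ to $1+\eta/10$, $\sqrt h$ drops from about $2$ to about $0.1$. Over the same range, $\delta=-\theta+\eta^2/(\theta-1)+1/(\theta+1)$ moves by only $O(\eta)$. With $I_h=[1/2,3/2]$ and any fixed $\sigma_b<1/2$, the acceptable $\delta$'s form an interval of length $O(\eta)$ inside a bracket of length $\Theta(1)$. For a generic position of that interval (e.g.\ perturb the second entry of $g_k^Q$), bisection then needs $\Omega(\log(1/\eta))$ halvings. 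Meanwhile $U_\phi$, $U_{\rm H}$, $\kappa_{\rm h}$, $h_{\min}$ stay fixed and no hard case is triggered.

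Hence, with a stopping test on $\sqrt{h_k}$ as in the inner loop of Algorithm \ref{alg:asoda}, your route cannot yield a bound of the form \eqref{eqcomplexity of bisection}. The logarithm would also have to contain a nondegeneracy quantity such as $|\lambda_1|/|(g_k^Q)^Tu_1|$.

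The stated bound does follow if $\sigma_b$ is a tolerance on the width of the $\delta$-bracket. Then Step 2 alone gives $\lceil\log_2(W/\sigma_b)\rceil$ halvings, with $W$ polynomial in $\kappa_{\rm h},U_\phi,U_{\rm H},h_{\min}^{-1}$, and Step 3 should be dropped. The cost is that the returned $\sqrt{h_k}$ is no longer guaranteed to lie within $\sigma_b$ of $I_h$. The counting of unsuccessful iterations relies on that property.

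In Algorithm \ref{alghardcase} the perturbation does force $(g_{k,i}^Q)^Tv_k=\varsigma\lambda_1^2/(\gamma_3^2h_{k,i-1})$. This bounds the offending ratio in terms of $\kappa_{\rm h}$, $\varsigma$ and $|\lambda_1|^{-1}$, but the main loop has no comparable safeguard against the near-hard case.
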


We arrive at the conclusion on the total number of GHMs solved in Algorithm \ref{alg:asoda}.
\begin{theorem}
    \label{corofinal complexity}
    Let $\mathcal K_{\psi}$ be the total number of evaluations to solve GHMs in Algorithm.
    Suppose the tolerance for the bisection procedure is set to $\sigma_b=\epsilon$. Then, before reaching an iterate satisfying \eqref{eq0}--\eqref{eq2}, we have the following bound:
    \begin{equation}\label{eqoracle calls}
        \mathcal K_{\psi} = O\left(\epsilon^{-3/2}\log(\epsilon^{-1})\right).
    \end{equation}
\end{theorem}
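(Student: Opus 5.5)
The plan is to write $\mathcal K_{\psi}$ as a sum over outer iterations of the GHM solves done in each one, and bound the two factors separately. Each outer iteration $k$ of Algorithm \ref{alg:asoda} solves one GHM per pass of the inner $j$-loop. That loop is the bisection on $\delta_{k,j}$, which runs until $\sqrt{h_k}\in I_h$ within tolerance $\sigma_b$. So
\[
\mathcal K_{\psi}\le \sum_{k\le j_f}\bigl(1+N_k\bigr),
\]
where $N_k$ is the number of bisection steps at $x_k$. Under the standing hypotheses the hard case does not occur; the perturbation of Algorithm \ref{alghardcase} is deferred to Subsection \ref{sec:hardcase}. I will argue that it adds at most a factor of the same logarithmic order per outer iteration.

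First, bound the number of outer iterations. By Theorem \ref{thm.global}, together with \eqref{eqcard of suc} and \eqref{eqbound unsuc by suc}, $j_f+1=|\mathcal S_{j_f}|+|\mathcal U_{j_f}|=O(\epsilon^{-3/2})$. The unsuccessful count also carries $\frac{1}{2\log\gamma_2}\log(\kappa_{\rm h}/h_0)$, which is $O(1)$ under $\kappa_{\rm h}=O(1)$. Even in the general case $\kappa_{\rm h}=O(\epsilon^{-12})$ of Lemma \ref{lemupper}, this term is only $O(\log\epsilon^{-1})$ and is dominated by the main term. Infeasible successful iterations are already counted and are $O(\epsilon^{-1})$ by Lemma \ref{lemfeas}.

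Second, bound the per-iteration cost. Apply Theorem \ref{thmcomplexity bisection procedure} with $\sigma_b=\epsilon$, which gives
\[
N_k=O\bigl(\log(\kappa_{\rm h}U_\phi U_{\rm H}h_{\min}^{-1}\epsilon^{-1})\bigr).
\]
Here I check that $U_\phi$ and $U_{\rm H}$ are $\epsilon$-independent. $U_{\rm H}=M_{\rm H}$ by (G3), and $U_\phi=M_{\rm gQ}$ is bounded via (G2)–(G3) and \eqref{defHkQgkQ}, as in Lemma \ref{lem3.4}. Also $h_{\min}$ is a fixed algorithmic constant. Since $\log\kappa_{\rm h}=O(\log\epsilon^{-1})$ in every regime, $N_k=O(\log\epsilon^{-1})$ uniformly in $k$.

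Multiplying the two bounds gives $\mathcal K_{\psi}=O(\epsilon^{-3/2})\cdot O(\log\epsilon^{-1})=O(\epsilon^{-3/2}\log(\epsilon^{-1}))$.

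The main obstacle is the hard-case loop. There the outer index $i$ in Algorithm \ref{alghardcase} repeats the bisection with a new interval $[\gamma_2\sqrt{h_{k,i-1}},\gamma_3\sqrt{h_{k,i-1}}]$. I would treat each $i$-attempt like an unsuccessful iteration: it multiplies $\sqrt h$ by at least $\gamma_2$. The upper bound $h\le\kappa_{\rm h}$ from Lemma \ref{lemupper} then caps the number of attempts by $O(\log(\kappa_{\rm h}/h_{\min}))$. I would charge these attempts to $|\mathcal U_{j_f}|$ through the same counting as in \eqref{eqdeduce}, so the total still has the stated order.
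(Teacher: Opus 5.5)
Your proposal follows the paper's own argument: the paper's proof is the one-line combination of the $O(\epsilon^{-3/2})$ outer-iteration count of Theorem~\ref{thm.global} (valid only under $\kappa_{\rm h}=O(1)$, which is needed for $\kappa_{\rm f}$ itself and not just for the $\log(\kappa_{\rm h}/h_0)$ term) with the $O(\log(\epsilon^{-1}))$ per-iteration bisection count of Theorem~\ref{thmcomplexity bisection procedure} at $\sigma_b=\epsilon$, and your checks on $U_\phi$, $U_{\rm H}$, $h_{\min}$ and $\log\kappa_{\rm h}$ just spell out steps the paper leaves implicit. The paper sets the hard case aside in this proof; if you keep your hard-case sketch, note that Lemma~\ref{lemupper} is stated only for iterations without the hard case, so the bound on the number of perturbed attempts should instead come from Theorem~\ref{thmhardcase}, which gives acceptance at the first attempt (hence one extra bisection run) in the genuine-negative-curvature regime.
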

\begin{proof}
    It is directly implied by Theorem \ref{thm.global} and Theorem \ref{thmcomplexity bisection procedure}.
\qed
\end{proof}

\subsection{A Discussion on the Hard Case}\label{sec:hardcase}

In this Subsection, we supplement the previous global iteration complexity analyses by examining the convergence of Algorithm \ref{alghardcase} for the hard case. Recall that in the hard case $h_k$ becomes discontinuous, and the flexibility of GHM provides us with a remedy by appropriately perturbing $g_k^Q$; this allows us to achieve a successful iteration at $t=0$.

As described in Section \ref{sec2}, in Algorithm \ref{alghardcase} the base iterate $x_k$ is held fixed, and the counter $i=0,1,\dots$ enumerates the successive hard-case attempts; a quantity carrying the subscript $(k,i)$ is evaluated at the $i$-th attempt while the base point remains $x_k$. We set $\lambda_1 = \lambda_{\min}(H_k^Q)$. For each of the following iterates $i$, we perturb $g_{k,i}^Q$ based on the preceding $h_{k,i-1}$. When gradually increasing $h_{k,i}$, we use the same bisection method indexed by $j$ to find $\delta_{k,i,j}$. We show that it eventually produces a successful iteration, and that once an iterate is successful, no further hard-case treatment is needed.

\begin{lemma}\label{lemperturbedphi.succleadsto}
    Assume that $g_{k,i}^Q$ is perturbed in the following manner
        $$g_{k,i}^Q:= g_k^Q + \frac{\varsigma}{\gamma_3^{2} h_{k,i-1}}\lambda_1^2 v_{k}.$$
        Then $g_{k,i}^Q$ and $p_{k,i}^Q$ satisfy
        \begin{equation*}
            \|g_{k,i}^Q-g_k^Q\| \leq \varsigma \|p_{k,i}^Q\|^2.
        \end{equation*}
\end{lemma}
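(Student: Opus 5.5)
The bound follows from comparing the size of the perturbation with the lower bound on $\|p_{k,i}^Q\|$ supplied by the second-order optimality condition \eqref{TKM2}. First I compute the perturbation directly. Since $v_k$ is a unit eigenvector of $H_k^Q$ for $\lambda_1$,
\begin{equation*}
\|g_{k,i}^Q-g_k^Q\| = \frac{\varsigma\,\lambda_1^2}{\gamma_3^2 h_{k,i-1}}.
\end{equation*}
So it suffices to show $\|p_{k,i}^Q\|^2 \ge \lambda_1^2/(\gamma_3^2 h_{k,i-1})$, i.e.
\begin{equation*}
\|p_{k,i}^Q\| \ge \frac{|\lambda_1|}{\gamma_3\sqrt{h_{k,i-1}}}.
\end{equation*}

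The lower bound comes from the GHM solved inside Algorithm \ref{alghardcase}. The perturbation has a nonzero component along $v_k$, because $\lambda_1<0$ in the hard case. Hence $g_{k,i}^Q\not\perp S_1(H_k^Q)$, and by the eigenvalue-ordering discussion in Section \ref{sec2}, $t_{k,i,j}\neq 0$. Thus $p_{k,i}^Q$ is well defined and the optimality conditions \eqref{TKM1}--\eqref{TKM2} hold with $g_k^Q$ replaced by $g_{k,i}^Q$ and $h_k$ replaced by $h_{k,i}$. In particular \eqref{TKM2} gives
\begin{equation*}
H_k^Q+\sqrt{h_{k,i}}\,\|p_{k,i}^Q\| I\succeq 0.
\end{equation*}
Testing this against $v_k$ yields $\sqrt{h_{k,i}}\,\|p_{k,i}^Q\|\ge -\lambda_1=|\lambda_1|$, which is the same Rayleigh-quotient step used in Lemma \ref{lems}.

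Next I use the termination rule of the inner repeat loop: $\sqrt{h_{k,i}}\in I_h=[\gamma_2\sqrt{h_{k,i-1}},\gamma_3\sqrt{h_{k,i-1}}]$, hence $\sqrt{h_{k,i}}\le\gamma_3\sqrt{h_{k,i-1}}$. Therefore
\begin{equation*}
\|p_{k,i}^Q\|\ge \frac{|\lambda_1|}{\sqrt{h_{k,i}}}\ge\frac{|\lambda_1|}{\gamma_3\sqrt{h_{k,i-1}}}.
\end{equation*}
Squaring and multiplying by $\varsigma$ gives
\begin{equation*}
\varsigma\|p_{k,i}^Q\|^2\ge \frac{\varsigma\lambda_1^2}{\gamma_3^2 h_{k,i-1}} = \|g_{k,i}^Q-g_k^Q\|,
\end{equation*}
which is the claim.

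The main obstacle is how to treat the ``within tolerance $\sigma_b$'' clause in the membership $\sqrt{h_{k,i}}\in I_h$. Taken literally, it only gives $\sqrt{h_{k,i}}\le\gamma_3\sqrt{h_{k,i-1}}+\sigma_b$, which would introduce a harmless factor close to one. I would handle this by interpreting the bisection as returning $\sqrt{h_{k,i}}$ inside $I_h$ itself, as in \cite{he2025HSODF}; alternatively, I would absorb the tolerance into $\gamma_3$.

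A secondary point is the base case $i=0$, where $h_{k,-1}$ must be defined. I would set $h_{k,-1}:=h_{k-1}$, the value passed as input to Algorithm \ref{alghardcase}, so that the argument is uniform in $i$.
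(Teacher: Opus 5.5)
Your proof is correct and follows the paper's argument exactly. Both test \eqref{TKM2} against the minimal eigenvector to get $\|p_{k,i}^Q\|^2\ge\lambda_1^2/h_{k,i}$, use $\sqrt{h_{k,i}}\le\gamma_3\sqrt{h_{k,i-1}}$ from $I_h$, and compare with the perturbation size $\varsigma\lambda_1^2/(\gamma_3^2h_{k,i-1})$. Your extra remarks address points the paper passes over silently: why $t_{k,i,j}\neq 0$ so that \eqref{TKM2} applies, the $\sigma_b$ tolerance, and the convention $h_{k,-1}:=h_{k-1}$. The paper simply attributes \eqref{TKM2} to the iteration being successful and treats membership in $I_h$ as exact.
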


\begin{proof}
    Since the $(k,i)$-th iteration is successful, the optimality condition \eqref{TKM2} gives
    \begin{equation}
        H_{k}^Q+\sqrt{h_{k,i}}\|p_{k,i}^Q\| I\succeq 0. \notag
    \end{equation}
    The above inequality further implies that
    \begin{equation}\label{eqlarge stepsize}
        \|p_{k,i}^Q\|^2 \geq \frac{1}{h_{k,i}}\lambda_1^2 \geq \frac{1}{\gamma_3^2 h_{k,i-1}}\lambda_1^2.
    \end{equation}
    Therefore, the gap between $g_{k,i}^Q$ and $g_k^Q$ can be bounded as
    $$
        \|g_{k,i}^Q - g_k^Q\|  = \frac{{\varsigma}}{\gamma_3^{2} h_{k,i-1}}\lambda_1^2 \leq {\varsigma}\|p_{k,i}^Q\|^2.
    $$
\qed
\end{proof}

Next we show that when the hard case occurs, Algorithm \ref{alghardcase} produces a successful iterate. The proof below tracks explicitly the $h$-scaling $\|d_{k,i}\|\sim|\lambda_1|/\sqrt{h_{k,i}}$ of the tangential step and its coupling with the normal step $n_k$; this is what removes the gap in the earlier argument and shows that acceptance is controlled by the magnitude of the negative curvature rather than by $h_{k,i}$ itself.

\begin{theorem}\label{thmhardcase}
     Assume that Assumptions $(G1)-(G4)$ hold, and that the hard case is triggered at $x_k$ with \emph{genuine} negative curvature, i.e.\ $\lambda_1=\lambda_{\min}(H_k^Q)\le-\kappa_{\rm nc}$, where
     \begin{equation}\label{eqkappanc}
       \kappa_{\rm nc}:=6(1+\mu_{\max})\kappa_0,
     \end{equation}
     with $\mu_{\max}$ the uniform upper bound of $\{\mu_{k,i}\}$ established in Lemma \ref{lemq1}. Let the perturbation parameter satisfy $\varsigma\le\sqrt{h_{\min}}/6$. Then the trial step $d_{k,i}$ generated by Algorithm \ref{alghardcase} is accepted at the first trial, so that the hard case is resolved in a single outer iteration and $h_{k,i}\le\gamma_3^2 h_{k-1}$; in particular, the hard case does not force any further growth of the regularization parameter.
\end{theorem}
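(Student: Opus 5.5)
The plan is to show $\rho_{k,i}\ge\eta_1$ (indeed $\ge 1/2$ suffices once $\eta_1\le 1/2$, and otherwise the margin is adjusted) at the first attempt $i=0$. We do this by comparing the model-error bound of Lemma \ref{lem3.5} with a model-decrease lower bound driven by the negative curvature $\lambda_1$. We write
\[
1-\rho_{k,i}=\frac{w(x_k+d_{k,i},\mu_{k,i})-m(x_k,H_k,\mu_{k,i},d_{k,i})}{\Delta m_{k,i}},
\]
and bound the numerator by \eqref{pmerror}, i.e.\ by $(1+\mu_{\max})\kappa_0\|d_{k,i}\|^2$, using $\mu_{k,i}\le\mu_{\max}$ from Lemma \ref{lemq1}. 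The key point is to avoid \eqref{fimerror}, whose $h^{-1/2}$ scaling is what made earlier arguments depend on the size of $h$. Everything will then be expressed in powers of $\|p_{k,i}^Q\|$ rather than of $h_{k,i}$.

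For the denominator we use Lemma \ref{lemq2} and Lemma \ref{lem3.2} applied to the perturbed tangential subproblem. Here $t_{k,i,j}\neq0$, since the perturbation along $v_k$ breaks $g^Q\perp S_1$. Two facts follow from this, and one correction must be handled:
\begin{itemize}
\item The optimality conditions \eqref{TKM1}--\eqref{TKM2} hold with $g_{k,i}^Q$.
\item Therefore $\sqrt{h_{k,i}}\|p_{k,i}^Q\|\ge|\lambda_1|$, as in \eqref{eqlarge stepsize}.
\item The true model uses $g_k^Q$, not $g_{k,i}^Q$. The discrepancy $(g_{k,i}^Q-g_k^Q)^Tp_{k,i}^Q$ is bounded by $\varsigma\|p_{k,i}^Q\|^3$ via Lemma \ref{lemperturbedphi.succleadsto}.
\end{itemize}
Combining these, we aim for
\[
\Delta m_{k,i}\ge \Bigl(\tfrac{\sqrt{h_{k,i}}}{6}-\varsigma\Bigr)\|p_{k,i}^Q\|^3+(\mu_{\max}-\kappa_{\rm mk})\Delta m_k^N .
\]
Using $\varsigma\le\sqrt{h_{\min}}/6\le\sqrt{h_{k,i}}/6$ would kill the cubic term entirely. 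We will therefore instead extract $\tfrac12\|p^Q\|^2\bigl(\lambda_1+\sqrt{h}\|p^Q\|\bigr)$ plus the cubic term from the proof of Lemma \ref{lem3.2}, giving a decrease of order $\sqrt{h_{k,i}}\|p_{k,i}^Q\|^3/3\ge|\lambda_1|\|p_{k,i}^Q\|^2/3$ after absorbing $\varsigma$. The normal-step contribution $\Delta m_k^N\ge\kappa_{\rm mn}\|n_k\|$ from Lemma \ref{lem3.7} is kept for the $\|n_k\|$ part.

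Next we split $\|d_{k,i}\|^2=\|n_k\|^2+\|p_{k,i}^Q\|^2$ using orthogonality of $n_k$ (which lies in the range of $A_k^T$) and $Q_k p^Q$.
\begin{itemize}
\item The tangential part contributes a ratio of at most $(1+\mu_{\max})\kappa_0\|p^Q\|^2/(|\lambda_1|\|p^Q\|^2/3)\le 3(1+\mu_{\max})\kappa_0/\kappa_{\rm nc}=1/2$. This is precisely where the choice \eqref{eqkappanc} enters.
\item The normal part $(1+\mu_{\max})\kappa_0\|n_k\|^2$ is dominated by the penalty-weighted normal decrease $(\mu_{\max}-\kappa_{\rm mk})\kappa_{\rm mn}\|n_k\|$ together with $\|n_k\|\le h_k^{-1/4}$ from \eqref{nkdejie}.
\end{itemize}
This gives $1-\rho_{k,i}\le 1-\eta_1$ at $i=0$, so the first trial is accepted. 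The bound $h_{k,0}\le\gamma_3^2h_{k-1}$ is then immediate from $\sqrt{h_{k,0}}\in I_h=[\gamma_2\sqrt{h_{k-1}},\gamma_3\sqrt{h_{k-1}}]$.

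The main obstacle is the normal-step cross term. The quantity $\|n_k\|^2$ is not automatically controlled by $\Delta m_k^N$ with an $\epsilon$- and $h$-free constant, and the cubic cross term $\tfrac{\sqrt h}{3}(\|n\|^3+\|p\|^3-\|d\|^3)$ also needs care. I would first try to bound the cross term by $\sqrt h\,\|n\|\,\|p\|^2$ and absorb it using $\|n_k\|\le h^{-1/4}$ and $\sqrt h\|p^Q\|\ge|\lambda_1|$. If that fails, I would enlarge $\mu_{\max}$ (via \eqref{miuk}) to dominate these terms, at the cost of redefining $\kappa_{\rm nc}$ accordingly.
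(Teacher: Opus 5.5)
There is a genuine gap, and it is in your first step: bounding the numerator of $1-\rho_{k,i}$ by \eqref{pmerror}. Lemma \ref{lem3.5} obtains that bound by discarding the nonpositive term $-\frac{\sqrt{h_{k,i}}}{3}\|d_{k,i}\|^3$. That discarded term is exactly what makes the theorem true.

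Without it you need $\Delta m_{k,i}\ge\frac13|\lambda_1|\|p^Q_{k,i}\|^2$, and this bound is false. Your derivation treats $\frac12\|p^Q\|^2(\lambda_1+\sqrt{h}\|p^Q\|)$ as if it were of order $\sqrt{h}\|p^Q\|^3$. But $\lambda_1<0$, and in the hard case $\sqrt{h_{k,i}}\|p^Q_{k,i}\|$ sits just above $|\lambda_1|$, so this term can be arbitrarily small.

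Here is a concrete instance, the prototypical hard case: take $c_k=0$ (so $n_k=0$ and $\Delta m_{k,i}=\Delta m^B$) and $g_k^Q=0$. Write $e:=\|g^Q_{k,i}-g^Q_k\|>0$. Then $p^Q_{k,i}$ is a multiple of $v_k$, and $\sqrt{h_{k,i}}\|p\|=|\lambda_1|+e/\|p\|$. A direct computation in your model (with $g_k^Q$) gives $\Delta m^B=\frac16|\lambda_1|\|p\|^2-\frac13e\|p\|<\frac16|\lambda_1|\|p\|^2$. Your bound on $1-\rho_{k,i}$ is then strictly larger than $6(1+\mu_{\max})\kappa_0/|\lambda_1|$, which equals $1$ at $|\lambda_1|=\kappa_{\rm nc}$. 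So you cannot even conclude $\rho_{k,i}>0$, and with the constant in \eqref{eqkappanc} the ratio route cannot close.

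The route has further problems. It would need $\eta_1\le\frac12$, which the statement does not assume. It would also need $(1+\mu_{\max})\kappa_0\|n_k\|^2\le(\mu_{\max}-\kappa_{\rm mk})\kappa_{\rm mn}\|n_k\|$, which $\|n_k\|\le h_k^{-1/4}$ does not give for small $h_k$. Your fallback of enlarging $\mu_{\max}$ or $\kappa_{\rm nc}$ would prove a different statement.

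The paper never lower-bounds the denominator; it shows the numerator is nonpositive. Keep the cubic term and bound directly:
\[
m(x_k,H_k,\mu_{k,i},d_{k,i})-w(x_k+d_{k,i},\mu_{k,i})\ge-\|g_{k,i}-g_k\|\|d_{k,i}\|+\frac{\sqrt{h_{k,i}}}{3}\|d_{k,i}\|^3-(1+\mu_{k,i})\kappa_0\|d_{k,i}\|^2 .
\]
Then use $\|g_{k,i}-g_k\|\le\varsigma\|p^Q_{k,i}\|^2\le\varsigma\|d_{k,i}\|^2$ together with $\varsigma\le\sqrt{h_{k,i}}/6$. This gives the lower bound $\|d_{k,i}\|^2\bigl(\frac{\sqrt{h_{k,i}}}{6}\|d_{k,i}\|-(1+\mu_{\max})\kappa_0\bigr)$. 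Since $\sqrt{h_{k,i}}\|d_{k,i}\|\ge\sqrt{h_{k,i}}\|p^Q_{k,i}\|\ge|\lambda_1|\ge\kappa_{\rm nc}$, this is nonnegative. Because $m(x_k,H_k,\mu_{k,i},0)=w(x_k,\mu_{k,i})$, it follows that $\rho_{k,i}\ge1\ge\eta_1$.

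This argument works with the whole step $\|d_{k,i}\|$, so no normal/tangential split is needed. The normal-step and cross-term obstacle you flagged never arises, and no condition on $\eta_1$ is required.
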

\begin{proof}
Similar to Lemma \ref{lemupper}, we have
\begin{eqnarray*}
        &&m(x_{k},H_{k},\mu_{k,i},d_{k,i})-w(x_k+d_{k,i},\mu_{k,i})\\
        &=&f(x_k)-f(x_k+d_{k,i})+g_{k,i}^Td_{k,i}+ \frac{1}{2}d_{k,i}^TH_kd_{k,i}\\
        &&+\frac{\sqrt{h_{k,i}}}{3}\|d_{k,i}\|^3-\mu_{k,i}\|c(x_k+d_{k,i})\|+\mu_{k,i}\|c(x_k)+A_kd_{k,i}\| \\
        &\geq& -\|g_{k,i}-g_k\| \|d_{k,i}\|+\frac{\sqrt{h_{k,i}}}{3}\|d_{k,i}\|^3-(1+\mu_{k,i})\kappa_{\rm 0}\|d_{k,i}\|^2.
    \end{eqnarray*}
    Since the perturbation in Algorithm \ref{alghardcase} is applied only to the reduced gradient in the null-space basis, i.e., $g_{k,i}-g_k=Q_k(g_{k,i}^Q-g_k^Q)$, and $Q_k$ has orthonormal columns, we have $\|g_{k,i}-g_k\|=\|g_{k,i}^Q-g_k^Q\|$. Combining this with Lemma \ref{lemperturbedphi.succleadsto} yields $\|g_{k,i}-g_k\|\leq\varsigma\|p^Q_{k,i}\|^2$, and hence
    \begin{eqnarray*}
        &&m(x_{k},H_{k},\mu_{k,i},d_{k,i})-w(x_k+d_{k,i},\mu_{k,i})\\
        &\geq& -{\varsigma} \|d_{k,i}\|\|p^Q_{k,i}\|^2+\frac{\sqrt{h_{k,i}}}{3}\|d_{k,i}\|^3-(1+\mu_{k,i})\kappa_{\rm 0}\|d_{k,i}\|^2,
    \end{eqnarray*}
Now, since $\|d_{k,i}\|>0$, we use the orthogonality of the normal and tangential components. Because $d_{k,i}=n_k+Q_k p^Q_{k,i}$ with $n_k$ in the range of $A_k^T$ and $Q_k p^Q_{k,i}$ in the null space of $A_k$, we have $\|d_{k,i}\|^2=\|n_k\|^2+\|p^Q_{k,i}\|^2\ge\|p^Q_{k,i}\|^2$, hence $\varsigma\|p^Q_{k,i}\|^2\le\varsigma\|d_{k,i}\|^2$. Substituting this into the right-hand side of the last inequality gives
    \begin{eqnarray*}
        &&m(x_{k},H_{k},\mu_{k,i},d_{k,i})-w(x_k+d_{k,i},\mu_{k,i})\\
        &\geq& -\varsigma\|d_{k,i}\|^3+\frac{\sqrt{h_{k,i}}}{3}\|d_{k,i}\|^3-(1+\mu_{k,i})\kappa_{\rm 0}\|d_{k,i}\|^2\\
        &=& \Big(\frac{\sqrt{h_{k,i}}}{3}-\varsigma\Big)\|d_{k,i}\|^3-(1+\mu_{k,i})\kappa_{\rm 0}\|d_{k,i}\|^2.
    \end{eqnarray*}
Since $h_{k,i}\ge h_{\min}$ and $\varsigma\le\sqrt{h_{\min}}/6\le\sqrt{h_{k,i}}/6$, we have $\frac{\sqrt{h_{k,i}}}{3}-\varsigma\ge\frac{\sqrt{h_{k,i}}}{6}$, so that, extracting the positive factor $\|d_{k,i}\|^2$,
{\small
    \begin{eqnarray*}
        m(x_k,H_k,\mu_{k,i},d_{k,i})-w(x_k+d_{k,i},\mu_{k,i})
        &\geq& \|d_{k,i}\|^2\Big(\frac{\sqrt{h_{k,i}}}{6}\|d_{k,i}\|-(1+\mu_{k,i})\kappa_0\Big).
    \end{eqnarray*}}
It remains to lower-bound $\|d_{k,i}\|$. The optimality condition \eqref{TKM2}, i.e.\ $H_k^Q+\sqrt{h_{k,i}}\|p^Q_{k,i}\|I\succeq0$, applied to the minimal eigenpair of $H_k^Q$, gives $\|p^Q_{k,i}\|\ge|\lambda_1|/\sqrt{h_{k,i}}$, exactly as in \eqref{eqlarge stepsize}. Combined with $\|d_{k,i}\|\ge\|p^Q_{k,i}\|$, this yields $\frac{\sqrt{h_{k,i}}}{6}\|d_{k,i}\|\ge\frac{|\lambda_1|}{6}$. Using also $\mu_{k,i}\le\mu_{\max}$ (Lemma \ref{lemq1}), we arrive at
{\small
    \begin{eqnarray*}
        m(x_k,H_k,\mu_{k,i},d_{k,i})-w(x_k+d_{k,i},\mu_{k,i})
        &\geq& \|d_{k,i}\|^2\Big(\frac{|\lambda_1|}{6}-(1+\mu_{\max})\kappa_0\Big)\;\ge\;0,
    \end{eqnarray*}}
where the final inequality is exactly the genuine-negative-curvature condition $|\lambda_1|\ge\kappa_{\rm nc}$ of \eqref{eqkappanc}. Hence $m(x_k,H_k,\mu_{k,i},d_{k,i})-w(x_k+d_{k,i},\mu_{k,i})\ge0$. Since the model and the merit function coincide at the trial point's origin, i.e.\ $m(x_k,H_k,\mu_{k,i},0)=w(x_k,\mu_{k,i})$, the ratio \eqref{rhok1} satisfies $\rho_{k,i}=1-\frac{w(x_k+d_{k,i},\mu_{k,i})-m(x_k,H_k,\mu_{k,i},d_{k,i})}{\Delta m_{k,i}}\ge1\ge\eta_1$; the step $d_{k,i}$ is therefore accepted at the first trial, and Algorithm \ref{alghardcase} terminates in a single outer iteration without increasing $h_{k,i}$ beyond $\gamma_3^2 h_{k-1}$.
\qed
\end{proof}

\begin{remark}\label{rem:hardcaseregime}
    The condition $|\lambda_1|\ge\kappa_{\rm nc}$ delineates the regime in which the perturbation is genuinely needed. When $0<|\lambda_1|<\kappa_{\rm nc}$, the reduced Hessian is already ``nearly'' positive semidefinite: the tangential step induced by the perturbation satisfies $\|p^Q_{k,i}\|\le|\lambda_1|/\sqrt{h_{k,i}}\le\kappa_{\rm nc}/\sqrt{h_{\min}}=O(1)$, so the hard case is benign and the iteration is governed either by the normal step (when $\|c(x_k)\|>\kappa_c\epsilon$) or by the reduced-gradient step (when $\|g_k^Q\|\ge\kappa_g\epsilon$); both cases are covered by the non-hard-case analyses of the preceding subsections, and no separate negative-curvature decrease is required. When the hard case is actually triggered ($t_{k,j}=0$) with $0<|\lambda_1|<\kappa_{\rm nc}$, Algorithm \ref{alghardcase} still perturbs $g_k^Q$ along the minimal eigenvector, so that $t_{k,i,j}\neq0$ and the tangential step $p^Q_{k,i}$ is well defined and bounded by $|\lambda_1|/\sqrt{h_{k,i}}=O(1)$; the trial step is then governed by the normal-step or reduced-gradient decrease and is covered by the analyses of the preceding subsections, without invoking the genuine-negative-curvature condition $|\lambda_1|\ge\kappa_{\rm nc}$ of Theorem \ref{thmhardcase}. The fact that $\kappa_{\rm nc}$ is a constant independent of $\epsilon$ is precisely what makes the negative-curvature branch of Corollary \ref{corosucce} contribute $\Omega(\epsilon^{3/2})$ per successful iteration under $\kappa_{\rm h}=O(1)$, as recorded in Remark \ref{rem:varphimin}.
\end{remark}

Indeed, Theorem \ref{thmhardcase} implies that the hard case is resolved within a single iteration of Algorithm \ref{alghardcase} in the genuine-negative-curvature regime, and reduces to the standard normal- or gradient-step analysis otherwise (see Remark \ref{rem:hardcaseregime}).

\section{Numerical Results}\label{sec4}

In this section, we present the numerical results of ASODA (Algorithm \ref{alg:asoda}). All experiments were carried out on a Dell laptop equipped with an Intel(R) Core(TM) i5-1035G1 CPU @ 1.00 GHz; the code was implemented in MATLAB and run under MATLAB(R2020b). The test set consists of equality-constrained problems from the CUTEst collection \cite{Gould2015cutest}; the $81$ problems on which ASODA reached the prescribed accuracy are reported in Table \ref{table5.1}. In the table, $n$ and $m$ are the numbers of variables and constraints, respectively; NI denotes the number of iterations, and NF, NC, and NG denote the numbers of function, constraint, and gradient evaluations, respectively; the column ``Res'' reports the final residual $\mathrm{Res}=\max\{\|g_k^Q\|,\|c(x_k)\|\}$, and ``time'' reports the CPU time in seconds. 
\emph{Experimental setup.} The implementation follows the construction of Section \ref{sec2}: at each iteration the normal step $n_k^c$ is the minimum-norm solution of $A_kn+c(x_k)=0$, the generalized homogeneous model \eqref{pkQ} is treated as an eigenvalue problem, and the scalar $\delta_{k,j}$ is updated by the bisection procedure, both as described in Section \ref{sec2}. All algorithmic parameters ($\delta_0$, $h_{\min}$, $\eta_1$, $\eta_2$, $\gamma_1$--$\gamma_4$, $\mu_0$, $\nu$, $\tau_1$, $\tau_2$, $\sigma_b$) are numerical constants independent of the problem data and of $\epsilon$, in accordance with the parameter-free design of the method; they are fixed numerical values that are never adapted to the tolerance $\epsilon$, and their values are those recorded in the accompanying code. ASODA is deterministic---no random initialization or randomized linear algebra is used---so a single run per problem is reported. Because the competing implementations differ in language and in the level of optimization, the comparison is made in terms of the evaluation counts NF, NG and NC, which are machine-independent; the CPU times in Table \ref{table5.1} are reported for ASODA only, as an indication of practical cost.

Within each comparison, all methods are run to the same accuracy target. For ASODA this target is the second-order stationarity condition of Definition \ref{ss}: the algorithm stops at the first iterate $x_k$ satisfying $\|c(x_k)\|\le\kappa_c\epsilon$, $\|g_k^Q\|\le\kappa_g\epsilon$, and $\lambda_{\min}(H_k^Q)\ge-\kappa_H\sqrt{\epsilon}$, where we take $\kappa_c=\kappa_g=\kappa_H=1$ for simplicity; this is exactly the stopping condition of the while-loop in Algorithm \ref{alg:asoda}. The competing implementations use their own stopping rules, but are driven to the same tolerance $\epsilon$ as ASODA in each comparison. The tolerance is $\epsilon=10^{-8}$ for Table \ref{table5.1}; for the comparisons of Tables \ref{table4} and \ref{table6} the tolerance of the respective cited paper is used (see below).

We compare Algorithm \ref{alg:asoda} (ASODA) with Algorithm 2.1 in \cite{ZhongwenChen}, LANCELOT, and Algorithms 1 and 2 in \cite{Arreckx}; the comparison results are presented in Table \ref{table4} and Table \ref{table6}. The comparison results show that, in most cases, ASODA requires fewer function, gradient, and constraint evaluations than its competitors, demonstrating competitive numerical performance.

\emph{A note on instance sizes across tables.} For certain test problems, the dimensions reported in Table \ref{table5.1} differ from those in Tables \ref{table4} and \ref{table6} (e.g.\ BRATU2D with $n=1024$, $m=900$ in Table \ref{table5.1} versus $n=484$, $m=400$ in Table \ref{table4}). This is because the competing methods were run on smaller discretizations of these problems, whereas Table \ref{table5.1} reports ASODA on the larger default instances. For a like-for-like comparison, ASODA was re-run on exactly the same instances used for the competing methods, and the counts reported for ASODA in Tables \ref{table4} and \ref{table6} correspond to those instances; hence all within-table comparisons are made on identical problem instances.

\emph{On the comparability of the tables.} Tables \ref{table4} and \ref{table6} report ASODA run on the same instances and at the same tolerance as the corresponding competitors ($\epsilon=10^{-5}$ and $\epsilon=10^{-6}$, respectively, as in the cited papers), which differ from the tighter tolerance $\epsilon=10^{-8}$ used in Table \ref{table5.1}. Consequently, the ASODA counts in these tables need not coincide with those in Table \ref{table5.1}. Within each table, all methods are compared on identical instances and at a common tolerance, so that the comparisons are like-for-like.

\emph{Test-set selection and failures.} The equality-constrained CUTEst problems reported here are those for which ASODA reached the prescribed tolerance within the iteration budget. Problems whose initial point already satisfies the termination criterion are not listed, since the algorithm terminates at $k=0$; this is the case, for instance, for the DRCAVTY1--3 problems, which are therefore absent from all three tables. In Table \ref{table4} the symbol ``*'' marks a problem for which LANCELOT did not reach the required accuracy within the prescribed budget (FLOSP2TL), which ASODA solved without difficulty; this is the only entry in which a competing method failed, so that the ASODA--LANCELOT comparison in Table \ref{table4} covers $42$ problems whereas the ASODA--Algorithm~2.1 comparison covers all $43$ rows of that table.

\begin{center}
\small
\setlength{\tabcolsep}{4pt}
{
\begin{longtable}{ccccccccc}  
\caption{Numerical results of the ASODA Algorithm}\label{table5.1}
  \endfirsthead   
  \multicolumn{9}{c}{Table \ref{table5.1} continued}\\
  \hline
\multirow{2}{*}{Problem}  & \multicolumn{2}{c}{Dimension} &\multirow{2}{*}{NI}   & \multirow{2}{*}{NF}  &\multirow{2}{*}{NC}  &\multirow{2}{*}{NG}  &\multirow{2}{*}{Res}  &\multirow{2}{*}{time}\\
                          \cmidrule{2-3}

                          &$n$&$m$                                                         \\
  \hline
  \endhead
  \hline
\multirow{2}{*}{Problem}  & \multicolumn{2}{c}{Dimension} &\multirow{2}{*}{NI}   & \multirow{2}{*}{NF}  &\multirow{2}{*}{NC}  &\multirow{2}{*}{NG}  &\multirow{2}{*}{Res}  &\multirow{2}{*}{time}\\
                          \cmidrule{2-3}

                          &$n$&$m$                       \\
\hline
AIRCRFTA & 8 & 5 & 3 & 3 & 3 & 3 & 7.0525e-19 & 0.028331 \\
ARTIF & 102 & 100 & 12 & 12 & 12 & 12 & 1.6079e-11 & 0.262716 \\
BAmL1SP & 57 & 12 & 5 & 5 & 5 & 5 & 5.6454e-12 & 2.281692	\\
BRATU2D & 1024 & 900 & 3 & 3 & 3 & 3 & 2.6208e-09 & 1.907733 \\
BRATU2DT & 1024 & 900 & 4 & 4 & 4 & 4 & 1.9376e-14 & 2.459303 \\
BRATU3D & 4913 & 3375 & 4 & 4 & 4 & 4 & 4.3241e-14 & 77.653837 \\
BT1 & 2 & 1 & 6 & 6 & 6 & 6 & 3.3878e-10 & 0.031558\\
BT2 & 3 & 1 & 15 & 15 & 15 & 15 & 1.8443e-09 & 0.100351\\
BT3 & 5 & 3 & 14 & 14 & 14 & 14 & 2.0958e-09 & 0.035366\\
BT4 & 3 & 2 & 7 & 7 & 7 & 7 & 3.4734e-09 & 0.032733\\
BT5 & 3 & 2 & 5 & 5 & 5 & 5 & 4.6573e-09 & 0.031506\\
BT6 & 5 & 2 & 17 & 17 & 17 & 17 & 7.0832e-09 & 0.134826\\
BT8 & 5 & 2 & 7 & 7 & 7 & 7 & 6.6637e-09 & 0.073569\\
BT9 & 4 & 2 & 10 & 10 & 10 & 10 & 9.6644e-09 & 0.046409\\
BT11 & 5 & 3 & 7 & 7 & 7 & 7 & 7.9468e-09 & 0.032276\\
BT12 & 5 & 3 & 12 & 12 & 12 & 12 & 4.1558e-09 & 0.061319\\
BYRDSPHR & 3 & 2 & 10 & 10 & 10 & 10 & 3.8218e-09 & 0.032310 \\
CBRATU2D & 3200 & 2888 & 4 & 4 & 4 & 4 & 1.1170e-15 & 28.894635\\
CBRATU3D & 3456 & 2000 & 4 & 4 & 4 & 4 & 3.8812e-14 & 29.493855\\
DTOC2 & 5998 & 3996 & 6 & 6 & 6 & 6 & 8.5438e-09 & 444.684008	\\
DTOC3 & 299 & 198 & 22 & 22 & 22 & 22 & 6.8839e-10 & 1.715504	\\
DTOC4 & 2999 & 1998 & 13 & 13 & 13 & 13 & 4.1047e-09 & 110.130702	\\
DTOC5 & 1999 & 999 & 12 & 12 & 12 & 12 & 5.0670e-09 & 29.892085	\\
DTOC6 & 201 & 100 & 23 & 23 & 23 & 23 & 1.7032e-08 & 3.601708	\\
DTOC1NA & 598 & 396 & 19 & 19 & 19 & 19 & 7.6965e-10 & 16.896	\\
DTOC1NB & 598 & 396 & 17 & 17 & 17 & 17 & 2.3243e-10 & 17.290252	\\
DTOC1NC & 598 & 396 & 10 & 10 & 10 & 10 & 7.5235e-09 & 8.895817	\\
DTOC1ND & 598 & 396 & 11 & 11 & 11 & 11 & 1.5045e-10 & 10.671473	\\
DTOC1L & 298 & 196 & 12 & 12 & 12 & 12 & 6.8506e-08 & 1.250889 \\
EIGENA2 & 6 & 3 & 4 & 4 & 4 & 4 & 8.8540e-10 & 0.029693 \\
EIGENB2 & 6 & 3 & 10 & 10 & 10 & 10 & 8.0625e-09 & 0.086322 \\
FLOSP2TL & 363 & 323 & 5 & 5 & 5 & 5 & 9.4773e-12 & 0.414656\\
GENHS28 & 10 & 8 & 15 & 15 & 15 & 15 & 3.8322e-09 & 0.044049	\\
HAGER1 & 2001 & 1000 & 4 & 4 & 4 & 4 & 7.1809e-09 & 11.948625	\\
HAGER2 & 1001 & 500 & 7 & 7 & 7 & 7 & 3.6617e-09 & 18.388018	\\
HAGER3 & 2001 & 1000 & 7 & 7 & 7 & 7 & 4.8667e-09 & 19.755394	\\
HS06 & 2 & 1 & 10 & 10 & 10 & 10 & 1.2508e-09 & 0.032876	\\
HS07 & 2 & 1 & 9 & 9 & 9 & 9 & 5.3501e-09 & 0.072297	\\
HS09 & 2 & 1 & 18 & 18 & 18 & 18 & 4.4672e-09 & 0.04275	\\
HS26 & 3 & 1 & 25 & 25 & 25 & 25 & 3.4531e-09 & 0.159187	\\
HS27 & 3 & 1 & 28 & 28 & 28 & 28 & 7.0892e-09 & 0.040849\\
HS28 & 3 & 1 & 10 & 10 & 10 & 10 & 1.6656e-09 & 0.036106	\\
HS39 & 4 & 2 & 11 & 11 & 11 & 11 & 6.8425e-10 & 0.105259	\\
HS40 & 4 & 3 & 5 & 5 & 5 & 5 & 4.5759e-09 & 0.042367	\\
HS42 & 4 & 2 & 4 & 4 & 4 & 4 & 3.3342e-09 & 0.019618	\\
HS46 & 5 & 2 & 19 & 19 & 19 & 19 & 1.8829e-09 & 0.123811	\\
HS47 & 5 & 3 & 18 & 18 & 18 & 18 & 9.4240e-09 & 0.054164	\\
HS48 & 5 & 2 & 11 & 11 & 11 & 11 & 9.2041e-09 & 0.022006	\\

HS51 & 5 & 3 & 7 & 7 & 7 & 7 & 5.4843e-10 & 0.038322	\\
HS56 & 7 & 4 & 21 & 21 & 21 & 21 & 8.4754e-09 & 0.041807	\\
HS61 & 3 & 2 & 9 & 9 & 9 & 9 & 3.2695e-10 & 0.034174	\\
HS78 & 5 & 3 & 13 & 13 & 13 & 13 & 5.0439e-09 & 0.087216	\\
HS79 & 5 & 3 & 5 & 5 & 5 & 5 & 1.6753e-09 & 0.037794	\\
HS100LNP & 7 & 2 & 24 & 24 & 24 & 24 & 9.3762e-09 & 0.226268	\\
HS111LNP & 10 & 3 & 28 & 28 & 28 & 28 & 2.8635e-09 & 0.069632\\
HUESTIS & 100 & 2 & 28 & 28 & 28 & 28 & 2.3408e-11 & 1.108031	\\
HUESmMOD & 10 & 2 & 25 & 25 & 25 & 25 & 3.3878e-11 & 0.600084\\
INTEGREQ & 502 & 500 & 4 & 4 & 4 & 4 & 3.1781e-15 & 91.544464	\\
MARATOS & 2 & 1 & 4 & 4 & 4 & 4 & 2.9091e-09 & 0.020958	\\
MWRIGHT & 5 & 3 & 30 & 30 & 30 & 30 & 3.7061e-09 & 0.11604\\
ORTHRDM2 & 2003 & 1000 & 13 & 13 & 13 & 13 & 6.5672e-12 & 220.341976	\\
ORTHREGB & 27 & 6 & 4 & 4 & 4 & 4 & 3.2755e-09 & 0.404811\\
ORTHREGC & 1005 & 500 & 30 & 30 & 30 & 30 & 6.0319e-09 & 60.796477	\\
ORTHRGDM & 1003 & 500 & 12 & 12 & 12 & 12 & 1.5631e-09 & 24.373448	\\
ORTHRGDS & 503 & 250 & 17 & 17 & 17 & 17 & 7.0411e-09 & 9.256148	\\
POROUS1 & 4096 & 3844 & 6 & 6 & 6 & 6 & 9.9518e-11 & 116.184786\\
POROUS2 & 4096 & 3844 & 7 & 7 & 7 & 7 & 9.2658e-11 & 136.759611\\
S219 & 4 & 2 & 22 & 22 & 22 & 22 & 4.2471e-09 & 0.149621	\\
S235 & 3 & 1 & 14 & 14 & 14 & 14 & 5.0599e-09 & 0.041762	\\
S252 & 3 & 1 & 15 & 15 & 15 & 15 & 7.1699e-09 & 0.060256	\\
S254 & 3 & 2 & 7 & 7 & 7 & 7 & 6.1284e-10 & 0.035348\\
S269 & 5 & 3 & 6 & 6 & 6 & 6 & 1.4069e-09 & 0.033374\\
S317 & 2 & 1 & 9 & 9 & 9 & 9 & 2.6413e-09 & 0.045241\\
S318 & 2 & 1 & 12 & 12 & 12 & 12 & 3.9550e-10 & 0.043701\\
S321 & 2 & 1 & 20 & 20 & 20 & 20 & 6.8505e-09 & 0.049496\\
S344 & 3 & 1 & 8 & 8 & 8 & 8 & 4.7680e-09 & 0.080072\\
S378 & 10 & 3 & 26 & 26 & 26 & 26 & 6.5821e-09 & 0.057716	\\
S394 & 20 & 1 & 25 & 25 & 25 & 25 & 6.5976e-09 & 0.06195	\\
SEMICN2U & 12 & 10 & 11 & 11 & 11 & 11 & 8.3457e-09 & 0.073321\\
TAME & 2 & 1 & 1 & 1 & 1 & 1 & 3.3307e-16 & 0.009552	\\
TATP1CNE & 2600 & 2600 & 5 & 5 & 5 & 5 & 1.7154e-12 & 132.101433\\

\hline
\end{longtable}
}
\end{center}

Several observations can be drawn from Table \ref{table5.1}. First, regarding the convergence behavior, ASODA reaches the prescribed accuracy on all $81$ problems in at most $30$ iterations; apart from TAME, which terminates at $k=1$, every problem requires at least $3$ iterations, so that the iteration count lies in $[3,30]$ for $80$ of the $81$ problems. The count shows no systematic growth with the problem dimension: large-scale instances such as BRATU3D ($n=4913$), DTOC2 ($n=5998$) and POROUS1 ($n=4096$) are solved within $4\sim6$ iterations, whereas the problems with $n\le 10$ need $10\sim30$ iterations in about $60\%$ of the cases, with a median of $11$. This absence of dimension dependence is a hallmark of second-order methods: the incorporation of curvature information makes the iteration count essentially independent of the problem size, in contrast to first-order methods, whose iteration counts are governed mainly by the conditioning of the problem rather than by its dimension.

Second, the magnitude of the reported residual Res deserves particular attention. For $79$ of the $81$ problems the final residual satisfies $\mathrm{Res}\le\epsilon$: $52$ of them terminate in $[10^{-9},10^{-8})$ and a further $16$ between $10^{-11}$ and $10^{-9}$, so that in the majority of cases the termination test is met just below the tolerance rather than far below it. A smaller group of problems terminates well below the tolerance---for instance CBRATU2D ($1.1\times10^{-15}$), INTEGREQ ($3.2\times10^{-15}$) and TAME ($3.3\times10^{-16}$)---and AIRCRFTA attains $7.1\times10^{-19}$, close to machine precision. The two remaining problems, DTOC6 and DTOC1L, terminate marginally above the tolerance, at $1.7\times10^{-8}$ and $6.9\times10^{-8}$ respectively. 
For the problems that do terminate well below the tolerance, this indicates empirically rapid terminal convergence near the solution (no formal local rate is claimed in this work). This is consistent with the mechanism of the adaptive cubic-regularization term: once the iterates enter a small neighborhood of a solution, the successful iterations shrink the interval $I_h$ and hence the coefficient $\sqrt{h_k}$, so that the cubic term $\frac{\sqrt{h_k}}{3}\|d_k\|^3$ becomes negligible and the trial step approaches a Newton step, yielding the observed fast terminal convergence.

Third, the results are consistent with the global complexity bound $\mathcal{O}(\epsilon^{-3/2})$ established in Section \ref{sec3}. The theory guarantees that the worst-case number of iterations is $\mathcal{O}(\epsilon^{-3/2})$; the observed iteration counts are far below this worst-case bound, which is expected, because the bound characterizes the global worst-case behavior over all admissible functions, whereas the CUTEst problems are benign and admit fast local convergence. Moreover, each iteration solves exactly one generalized homogeneous model (up to the additional eigensolves generated by the bisection on $\delta_{k,j}$), so the number of such solves is of the same order as the iteration count, in agreement with Theorem \ref{corofinal complexity}; the hard case (Algorithm \ref{alghardcase}) is handled by the perturbation procedure, which by Theorem \ref{thmhardcase} resolves it within a single outer iteration in the genuine-negative-curvature regime. 

\begin{center}
\small
\setlength{\tabcolsep}{1.8pt}
{
		\begin{longtable}{ccccccccccccc}
			\caption{Comparison results 1}\label{table4}
			\endfirsthead
			\multicolumn{13}{l}{Table \ref{table4} (continued)}\\
			\hline
			\multirow{2}{*}{Problem}  & \multicolumn{2}{l}{Dimension}&&\multicolumn{2}{c}{ASODA }&&\multicolumn{2}{c}{Alg.2.1 in Ref. \cite{ZhongwenChen}}&&\multicolumn{2}{c}{LANCELOT } \\
			\cmidrule{2-3}
			\cmidrule{5-6}
            \cmidrule{8-9}
            \cmidrule{11-12}
            &$n$&$m$                       & &NF&NC   & &NF&NC   &&NF&NC& \\
			\hline
			\endhead
			\hline
			\multirow{2}{*}{Problem}  & \multicolumn{2}{l}{Dimension}&&\multicolumn{2}{c}{ASODA }&&\multicolumn{2}{c}{Alg.2.1 in Ref. \cite{ZhongwenChen}}&&\multicolumn{2}{c}{LANCELOT } \\
		    \cmidrule{2-3}
			\cmidrule{5-6}
            \cmidrule{8-9}
            \cmidrule{11-12}
			&$n$&$m$                       & &NF&NC   & &NF&NC   &&NF&NC& \\
			\hline
 AIRCRFTA	&8	&5	&&2&2 &&12	&21 &&5& 5\\
BRATU2D	    &484	&400	&&3&3  &&6&10 &&3& 3\\
BRATU2DT	&484	&400	&&3	&3 &&6&10 &&8 &8\\
BRATU3D	    &125	&27		&&3	&3 &&10&18 &&5& 5\\
BT1	        &2	&1	&&6	&6	&&8&9 &&48 &48\\
BT2	        &3	&1	&&12&12	&&11&11 &&22& 22\\
BT3	        &5	&3	&&14&14	&&6&6 &&16 &16\\
BT4	        &3	&2	&&7	&7	&&9&9 &&28& 28\\
BT5	        &3	&2	&&5	&5	&&5&5 &&16& 16\\
BT6	        &5	&2	&&11&11	&&14&14 &&26 &26\\
BT8	        &5	&2	&&7	&7	&&12&19 &&25& 25\\
BT9	        &4	&2	&&10&10	&&13&15 &&20& 20\\
BT11	    &5	&3	&&7	&7	&&8&8 &&19 &19\\
BT12	    &5	&3	&&12&12	&&7&8 &&21& 21\\
BYRDSPHR	&3	&2	&&10&10	&&10&13 &&22& 22\\
CBRATU2D	&98	&50&&2&2 &&9&16&& 5& 5\\
CBRATU3D	&128&16&&3&3&&4&4 &&5 &5\\
DTOC2	&598&396&&4&4&&17&27 &&21& 21	\\
DTOC4		&299&198 &&4&4&&3&3&& 17& 17\\
DTOC5	&19	&9&&3&3&&4&4 &&14 &14\\
EIGENA2	&6	&3	&&4	&4&&3&3 &&7 &7\\
EIGENB2	&6	&3	&&10&10	&&9&9 &&39& 39\\
FLOSP2TL	&363&323&&5&5&&9&9 &&* &*\\
HAGER1	&1001	&500&&2&2&&2&2 &&13& 13\\
HAGER2	&1001	&500&&2&2&&6&6 &&12& 12\\
HAGER3	&1001	&500&&2&2&&14&14 &&13 &13\\
HS06	&2	&1	&&10&10&&10&13 &&30 &30\\
HS07	&2	&1	&&6	&6	&&9&11 &&17& 17\\
HS26	&3	&1	&&18	&18	&&19&19&& 22& 22\\
HS27	&3	&1	&&16&16 &&23&25 &&14 &14\\
HS39	&4	&2	&&8	&8	&&13&15 &&20& 20\\
HS40	&4	&3	&&5	&5	&&4&4 &&16 &16\\
HS42	&4	&2	&&4	&4	&&4&4 &&13& 13\\	
HS46	&5	&2	&&16&16	&&17&17 &&19& 19\\
HS47	&5	&3	&&15&15	&&18&18 &&21 &21\\		
HS51	&5	&3	&&7	&7  &&2&2 &&7 &7\\
HS56	&7	&4	&&14&14	&&9&9 &&18& 18\\
HS61	&3	&2	&&9	&9	&&6&6 &&16 &16\\
HS78	&5	&3	&&8	&8	&&5&5 &&12& 12\\
HS79	&5	&3	&&5	&5	&&5&5 &&11& 11\\
INTEGREQ	&102&100&&2	&2 &&2&2&& 4& 4\\
MARATOS	    &2	&1	&&4	&4	&&4&4 &&9& 9\\
ORTHREGB	&27	&6	&&4	&4	&&25&30 &&74& 74\\	
			\hline
		\end{longtable}
	}
\end{center}

\begin{center}
\footnotesize
	{
\setlength{\tabcolsep}{1.5pt}
		\begin{longtable}{cccccccccccccccccc}
			\caption{Comparison results 2}\label{table6}
			\endfirsthead
			\multicolumn{18}{l}{Table \ref{table6} (continued)}\\
			\hline
			\multirow{2}{*}{Problem}  & \multicolumn{2}{l}{Dimension}&&\multicolumn{3}{c}{ASODA }&&\multicolumn{3}{c}{Alg.1 in Ref. \cite{Arreckx}}&&\multicolumn{3}{c}{Alg.2 in Ref. \cite{Arreckx}} \\
			\cmidrule{2-3}
			\cmidrule{5-7}
            \cmidrule{9-11}
            \cmidrule{13-15}
            &$n$&$m$                       & &NF&NG&NC   & &NF&NG&NC   &&NF&NG&NC& \\
			\hline
			\endhead
			\hline
			\multirow{2}{*}{Problem}  & \multicolumn{2}{l}{Dimension}&&\multicolumn{3}{c}{ASODA }&&\multicolumn{3}{c}{Alg.1 in Ref. \cite{Arreckx}}&&\multicolumn{3}{c}{Alg.2 in Ref. \cite{Arreckx}} \\
		    \cmidrule{2-3}
			\cmidrule{5-7}
            \cmidrule{9-11}
            \cmidrule{13-15}
			&$n$&$m$                       & &NF&NG&NC   & &NF&NG&NC   &&NF&NG&NC& \\
			\hline
AIRCRFTA &8	&5&& 2&2 &2 &&5&3&5&& 18&15&18\\
BRATU3D	&4913	&3375 &&4&4&4 &&	7&	4	&7 &&99&63&	99\\
BT1	&2&	1 &&6&6&6 &&	17&	9&	17&& 1979	&1005	&1980\\
BT2	&3&	1 &&15&15&15 &&	19&	10&	19 &&35&35&	35\\
BT3	&5&	3 &&14&14& 14 &&	73&	37&	73 &&65&59&	65\\
BT4	&3&	2 &&7&7&7 &&	57&	29&	57 &&2009&1008	&2010\\
BT5&	3&	2 &&5&5& 5&	&17&	9&	17 &&69&61	&69\\
BT6	&5&	2 &&14&14& 14&&	23&	12&	23 &&2719	&239	&2719\\
BT8	&5&	2 &&7&7&7 &&	99&	40&	99 &&27	&27	&27\\
BT9	&4&	2 &&10&10& 10&&	25&	13&	25 &&21&21	&21\\
BT11&	5&	3 &&7&7& 7&	&35&	18&	35 &&30&30	&30\\
BT12&	5&	3 &&12&12&12&&	11&	6&	11 &&36	&36	&36\\
BYRDSPHR&	3&2  &&10&10&10 &	&35	&18&35 &&56&54	&56\\
CBRATU2D&	1058&	882 &&3&3&3 &&	3&	2&	3 &&90&50	&90\\
CBRATU3D&	2000&	1024 &&4&4& 4&&	3	&2&	3 &&17&16	&17\\
DTOC2	&5994&	3996 &&5&5& 5&&	45&	23&	45 &&981&510	&986\\
GENHS28	&10	&8&&15&15&15	&&17&	9&	17 &&64	&56	&64\\
HS6&	2&	1& &10& 10& 10&&17&	9&	17 &&14	&14	&14\\
HS7	&   2&	1& &9 &9& 9&&	17&	9&	17 &&21	&21	&21\\
HS9&	2&	1& &15 &15& 15&&	21&	11&	21 &&41	&21	&41\\
HS26&	3&	1 &&21& 21& 21&&	33&	17&	33 &&482	&162	&482\\
HS27&	3&	1&& 20 &20 &20&&	111&	51&	111 &&310	&88	&310\\
HS28&	3&	1&& 10 &10 &10&&	5&	3&	5 &&28	&28	&28\\
HS39&	4&	2&& 8& 8& 8&& 27&	14&	27 &&21	&21	&21\\
HS40&	4&	3 &&5& 5& 5&	&11&	6&	11 &&34&32	&34\\
HS46&	5&	2 &&23& 23& 23&&	49&	25&	49 &&597	&112	&597\\
HS47&	5&	3 &&16& 16& 16&	&95&	48&	95 &&625	&125	&625\\
HS48&	5&	2 &&11& 11& 11&&	5&	3&	5 &&38&38	&38\\
HS49&	5&	2 &&28 &28& 28&&	53&	27&	53 &&93	&76	&93\\
HS51&	5&	3&& 7& 7& 7&&	13&	7	&13 &&38	&38	&38\\
HS61&	3&	2 &&9& 9& 9&&	25&	13&	25 &&29	&29	&29\\
HS78&	5&	3& &9& 9& 9&&	43&	22&	43 &&28	&28	&28\\
HS79&	5&	3 &&5& 5& 5&&	17&	9&	17 &&27	&27	&27\\
HS100LNP&	7&	2 &&20&20&20 &&	17&	9&	17 &&119	&79	&119\\
HS111LNP	&10	&3 &&27&27&27	&&27&	14&	27 &&61	&52	&61\\
INTEGREQ	&102&	100 &&3&3&3 &&	5&	3&	5 &&39&	37&	39\\
MWRIGHT	&5	&3 &&24&24&24 &&	53&	27&	53 &&93&69&	93\\
ORTHREGB	&27&	6 &&4&4&4 &&	5&	3&	5 &&8	&8&8\\
			\hline
		\end{longtable}
	}
\end{center}

\begin{figure}[t]
\centering
\subfigure[Performance profiles based on NF]{
\begin{minipage}[t]{0.49\textwidth}
\centering
\includegraphics[height=5cm,width=5.8cm]{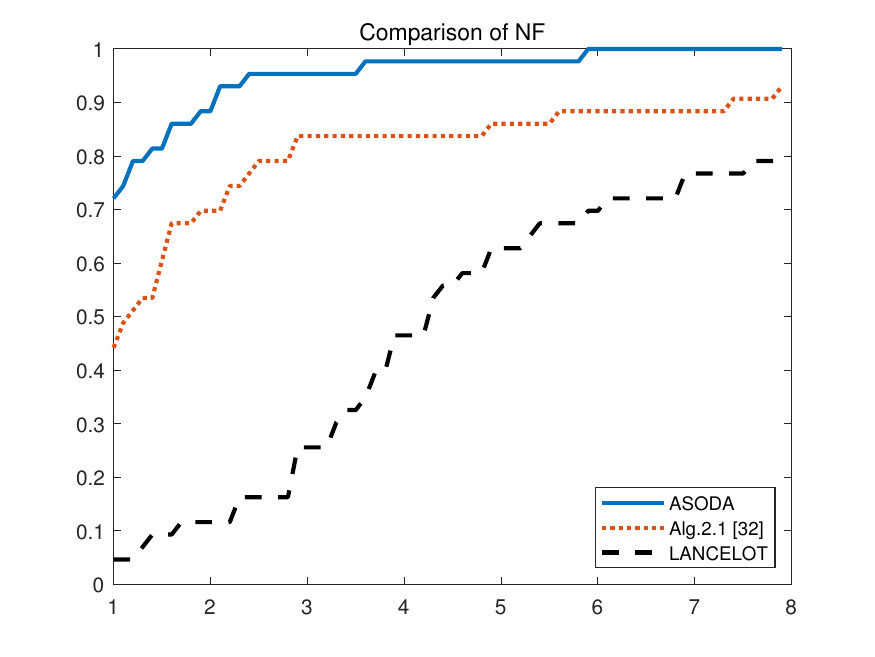}
\end{minipage}%
}%
\subfigure[Performance profiles based on NC]{
\begin{minipage}[t]{0.49\textwidth}
\centering
\includegraphics[height=5cm,width=5.8cm]{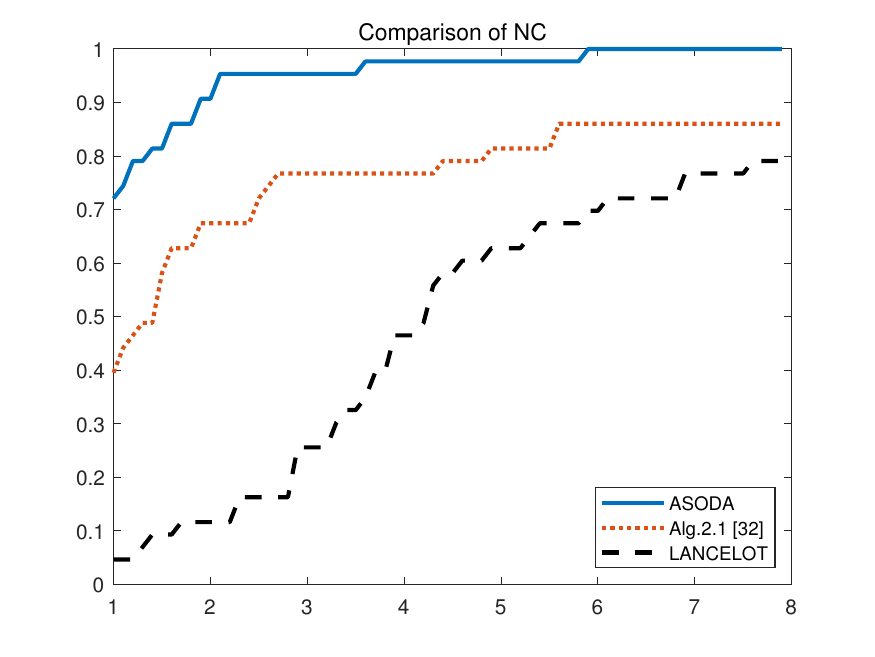}
\end{minipage}%
}%
\centering
\caption{Performance profiles comparing ASODA with Algorithm 2.1 in \cite{ZhongwenChen} and LANCELOT. 
The profiles show the fraction of problems solved within a factor $\tau$ of the best solver for (a) function evaluations and (b) constraint evaluations. 
ASODA attains the fewest function evaluations on 30 of the 42 problems shown.}\label{Figure1}
\end{figure}

\begin{figure}[t]
\centering
\subfigure[Performance profiles based on NF]{
\begin{minipage}[t]{0.49\textwidth}
\centering
\includegraphics[height=5cm,width=5.8cm]{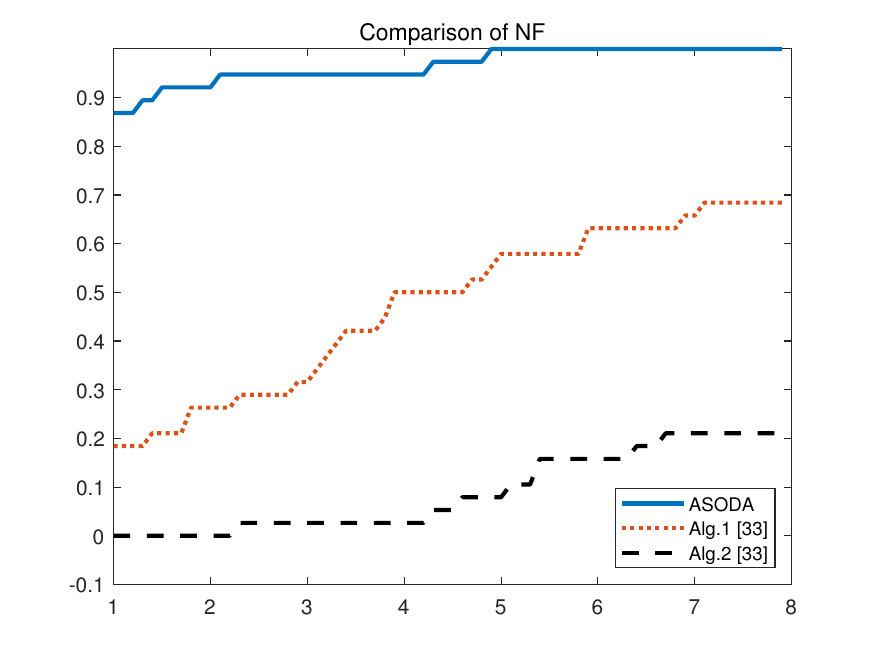}
\end{minipage}%
}%
\subfigure[Performance profiles based on NG]{
\begin{minipage}[t]{0.49\textwidth}
\centering
\includegraphics[height=5cm,width=5.8cm]{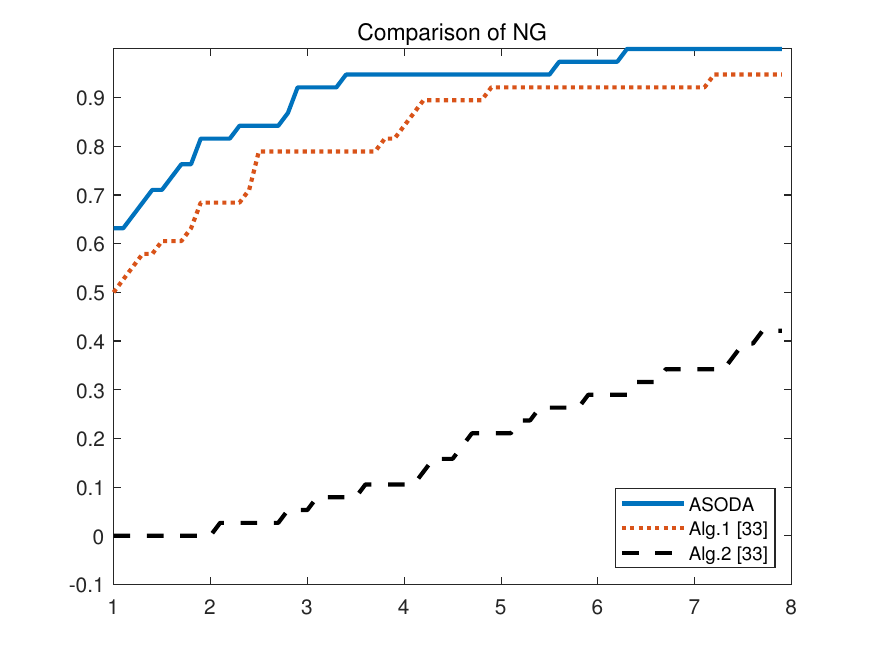}
\end{minipage}%
}%
\centering
\caption{Performance profiles comparing ASODA with Algorithms 1 and 2 in \cite{Arreckx} based on (a) function evaluations and (b) gradient evaluations. 
ASODA attains the fewest function evaluations on 33 of the 38 problems shown (see Table \ref{table6}).}\label{Figure2}
\end{figure}

The comparison results in Table \ref{table4}, Table \ref{table6} and Figures \ref{Figure1}--\ref{Figure2} further substantiate the efficiency of ASODA. Across the test problems, ASODA requires fewer function evaluations than its competitors in most cases: relative to Algorithm 2.1 in \cite{ZhongwenChen} it uses fewer, the same, and more evaluations on 24, 8 and 11 of the 43 problems in Table \ref{table4}, respectively; relative to LANCELOT, on 39, 2 and 1 of the 42 problems for which LANCELOT completed (the remaining problem, FLOSP2TL, is marked ``*''); relative to Algorithm 1 in \cite{Arreckx}, on 31, 2 and 5 of the 38 problems in Table \ref{table6}; and relative to Algorithm 2 in \cite{Arreckx}, it uses fewer evaluations on all 38 problems. These comparisons are made at the tolerance of the respective cited paper, as specified in the experimental setup above. The performance profiles show that ASODA attains the fewest function evaluations on 30 of the 42 problems of Table \ref{table4} and on 33 of the 38 problems of Table \ref{table6}, and its profile curve generally lies above those of the competitors over the range of $\tau$, indicating a robust advantage rather than a marginal one.

Two structural features of ASODA account for this advantage. First, ASODA performs exactly one evaluation of the function, the constraints, and the gradient per iteration---that is, NF$=$NG$=$NC$=$NI in Table \ref{table5.1}, and likewise NF$=$NC in Table \ref{table4} and NF$=$NG$=$NC in Table \ref{table6}---because each iteration solves a single GHM subproblem without any line search or additional merit-function probing. Line-search or trust-region SQP-type methods such as those in \cite{ZhongwenChen,Arreckx} evaluate the objective and the constraints repeatedly while adjusting the step, so that their evaluation counts are not tied to their iteration counts; this is a plausible structural reason why comparable iteration counts can translate into larger evaluation counts. Second, the adaptive cubic-regularization mechanism, together with the composite-step decomposition, produces steps of high quality, so that very few iterations are needed to reach the desired accuracy, and each such iteration is inexpensive in terms of derivative evaluations. Taken together, these observations indicate that the design choices---the homogenization-based adaptive second-order model and the composite-step SQP framework---translate into tangible practical gains, in line with the design goals set out in Sections \ref{sec2} and \ref{sec3}.

\emph{Scope of the comparison.} The present study compares ASODA with three established constrained solvers on a common CUTEst test set. A direct numerical comparison with the augmented-Lagrangian Newton--CG method of \cite{HeLuPong2023}---the closest competitor in terms of problem class---and with general-purpose solvers such as IPOPT and SNOPT is not included here, because these methods are naturally measured in different cost units (matrix--vector products and inner iterations versus factorization-based outer iterations; see the discussion in Section \ref{sec1}), so that a like-for-like comparison would require a careful, separate benchmarking effort. Extending the numerical study in this direction is left for future work.

\section{Conclusion}\label{sec:conclusion}

We have presented ASODA, a novel second-order algorithm for equality-constrained optimization that successfully integrates the homogeneous second-order descent framework with a composite-step SQP approach. The key achievement is a complexity analysis showing that ASODA attains the $\mathcal{O}(\epsilon^{-3/2})$ iteration bound of its unconstrained counterparts (HSODM/HSODF) under the same structural hypotheses, while every algorithmic parameter is a numerical constant independent of the problem data and of $\epsilon$. This theoretical guarantee is complemented by competitive numerical performance on standard test sets, showcasing the algorithm's practical potential.

Our analysis also includes a constructive treatment of the so-called ``hard case'', which confirms the algorithmic robustness. The current framework naturally suggests several promising avenues for future investigation. First, extending the methodology to general inequality constraints, potentially through active-set identification or interior-point techniques, would significantly broaden its applicability. Second, developing inexact and large-scale variants by incorporating iterative linear algebra (e.g., Krylov subspace methods) could unlock the algorithm's potential for very high-dimensional problems, as the current approach relies on explicit matrix factorizations. Finally, interfacing the method with automatic differentiation and state-of-the-art linear solvers would enhance its reproducibility and performance for a broader class of users, and a systematic study of the dependence of the iteration count on the tolerance $\epsilon$---which would empirically corroborate the $\mathcal{O}(\epsilon^{-3/2})$ bound---would complement the single-tolerance experiments reported here.
We also note that the present analysis relies on the full row rank of the constraint Jacobian $A_k$ at every iterate; extending the framework to rank-deficient constraints, for instance through weaker constraint qualifications or regularized normal steps, remains an open question for future work.

\begin{acknowledgements}
The authors gratefully acknowledge the support of the Natural Science Foundation of Henan Province (252300421993), the National Natural Science Foundation of China (12071133), and the Key Scientific Research Project for Colleges and Universities in Henan Province (25B110005).
\end{acknowledgements}

\section*{Declarations}

\noindent\textbf{Funding.} This work was supported by the Natural Science Foundation of Henan Province (252300421993), the National Natural Science Foundation of China (12071133), and the Key Scientific Research Project for Colleges and Universities in Henan Province (25B110005).

\noindent\textbf{Competing interests.} The authors declare that they have no conflict of interest.



\noindent\textbf{Author contributions.} All authors contributed to the conception and design of the study, the theoretical analysis, the numerical experiments, and the preparation of the manuscript. All authors read and approved the final manuscript.

\bibliographystyle{spmpsci}
\bibliography{SAODMrefs1_revised}

\end{document}